\newtheorem{theorem}{Theorem}[section]
\newtheorem{corollary}{Corollary}
\newtheorem{lemma}[theorem]{Lemma}
\newtheorem{proposition}{Proposition}
\newtheorem{problem}{Problem}
\newtheorem{example}{Example}
\theoremstyle{definition}
\newtheorem{definition}[theorem]{Definition}
\newtheorem{remark}{Remark}
\newcommand{\lip}{{\text{\rm Lip}}}
\newcommand{\norm}[1]{\left\Vert #1\right\Vert}
\newcommand{\nnorm}[1]{\lvert\!|\!| #1|\!|\!\rvert}
\title[Multiple ergodic averages for variable polynomials]
{Multiple ergodic averages for variable polynomials} 
\author[Andreas Koutsogiannis]{}
\subjclass{Primary: 37A44; Secondary: 37A05, 11B25, 11B83, 05D10.}
 \keywords{Variable polynomial sequences, multiple ergodic averages, multiple recurrence, characteristic factors, equidistribution, nilmanifolds, Hardy fields, sublinear functions.}
 \email{akoutsogiannis@math.auth.gr}
\begin{document}
\maketitle

\centerline{\scshape Andreas Koutsogiannis}
\medskip
{\footnotesize
 \centerline{Aristotle University of Thessaloniki}
   \centerline{Department of Mathematics}
   \centerline{Thessaloniki, 54124, Greece}
} 


\bigskip

\begin{center}\emph{Dedicated to the loving memory of Aris Deligiannis, a great mentor.}\end{center}

\bigskip

 \centerline{(Communicated by Zhiren Wang)}


\begin{abstract}
In this paper we study multiple ergodic averages for ``good'' variable polynomials. In particular, under an additional assumption, we show that these averages converge to the expected limit, making progress related to an open problem posted by Frantzikinakis (\cite[Problem~10]{F4}). These general convergence results imply several variable extensions of classical recurrence, combinatorial and number theoretical results which are presented as well.
\end{abstract}

%
%


\section{Introduction}

The study of multiple ergodic averages along polynomials dates back to 1977. Furstenberg, exploiting the $L^2$ limiting behavior (all the limits in this article are taken with respect to the $L^2$ norm, unless  otherwise stated),
as $N\to\infty,$ of
\begin{equation}\label{E:Furst}
\frac{1}{N}\sum_{n=1}^N T^{n}f_1\cdot T^{2n}f_2\cdot\ldots\cdot T^{\ell n}f_\ell,\end{equation}
where $\ell\in \mathbb{N},$ $(X,\mathcal{B},\mu,T)$ is a measure preserving system,\footnote{ I.e., $T:X\to X$ is an invertible measure preserving transformation on a standard Borel probability space $(X,\mathcal{B},\mu).$} and $f_1,\ldots,f_\ell\in L^\infty(\mu),$ provided (in \cite{Fu}) a purely ergodic theoretic proof of Szemer\'{e}di's theorem; every subset of natural numbers of positive upper density\footnote{For a set $ A \subseteq \mathbb{N} $ we define its \emph{upper density},
$ \bar{d}(A), $ as $ \bar{d}(A):=\limsup_{N}N^{-1}\cdot
|A\cap  \{1,\ldots, N \}|.$} contains arbitrarily long arithmetic progressions (a result that can be immediately obtained by combining Theorem~\ref{T:FMR} with Theorem~\ref{T:FCP} below).

A polynomial $p\in \mathbb{Q}[t]$ is an \emph{integer polynomial} if $p(\mathbb{Z})\subseteq \mathbb{Z}.$ It was Bergelson who first visualized the iterates $n, 2n,\ldots,\ell n$ in \eqref{E:Furst} as linear ``distinct enough'' integer polynomials. The integer polynomials $p_1,\ldots,p_\ell,$ $\ell\in \mathbb{N},$ are \emph{essentially distinct} if $p_i,$ $p_i-p_j$ are non-constant for all $i\neq j.$ Bergelson studied (initially in \cite{Be}), via the use of van der Corput's lemma, a crucial tool in ``reducing the complexity'' of the iterates,  averages of the form
\begin{equation}\label{E:Berg}
\frac{1}{N}\sum_{n=1}^N T^{p_1(n)}f_1\cdot\ldots\cdot T^{p_\ell(n)}f_\ell,\end{equation} for essentially distinct integer polynomials $p_1,\ldots,p_\ell$; this study eventually led to multidimensional polynomial extensions of Szemer\'{e}di's theorem (see \cite{BL}).

Bergelson and Leibman conjectured (in \cite{BL2}) that multiple ergodic averages of the form
\begin{equation}\label{E:Berg_Leib}
\frac{1}{N}\sum_{n=1}^N T_1^{p_1(n)}f_1\cdot\ldots\cdot T_\ell^{p_\ell(n)}f_\ell,\end{equation}
in any system, for multiple commuting $T_i$'s (i.e., $T_iT_j=T_jT_i$ for all $i,j$) and arbitrary integer polynomials $p_i$, always have limit (as $N\to\infty$). This conjecture was answered in the positive by Walsh, who actually showed it in greater generality (see \cite{W12}). 
 No specific expression of the limit was provided by the method.\footnote{ The conjecture corresponding to that of
Bergelson and Leibman about iterates which are integer parts of real polynomials, was shown in \cite{K1}.}

One of the questions that someone is called upon to answer is under which conditions, either on the polynomials or the system, we can explicitly find the limit of the aforementioned expressions. In particular, whether we can find families of polynomials for which we have convergence in a general system to a specific expression (see more about the ``expected'' limit below); then we can get a number of interesting applications, e.g., find the corresponding arithmetic configurations on ``large'' subsets of integers. For instance, 
showing that the characteristic factor coincides with the nilfactor of the system, and exploiting the equidistribution property of the corresponding polynomial sequence in nilmanifolds (all these notions will be defined
later), Frantzikinakis proved (in \cite{F2}) that the expression
\begin{equation}\label{E:Frang}
\frac{1}{N}\sum_{n=1}^N T^{[p(n)]}f_1\cdot T^{2[p(n)]}f_2\cdot\ldots\cdot T^{\ell[p(n)]}f_\ell,\end{equation} where $p\in \mathbb{R}[t]$ with $p(t)\neq cq(t)+d,$ $c,d\in \mathbb{R},$ $q\in \mathbb{Q}[t],$  
has the same limit (as $N\to\infty$), in \emph{any} system, as \eqref{E:Furst}; obtaining a refinement of Szemer\'{e}di's theorem.\footnote{ Such polynomials $p$ have the property that for every $\lambda\in\mathbb{R}\setminus\{0\},$ $\lambda p$ has at least one non-constant irrational coefficient, which is exactly the case (via Weyl's criterion) when the corresponding sequence $(\lambda p(n))_n$ is equidistributed.}

Generalizing the condition to multiple polynomials, following Frantzikinakis' approach, Karageorgos and the author showed (in \cite{KK}) that for strongly independent real polynomials $p_1,\ldots,p_\ell$ (i.e., any non-trivial linear combination of the $p_i$'s with scalars from $\mathbb{R}$ has at least one non-constant irrational coefficient) the expression
\begin{equation}\label{E:Kar_Kouts}
\frac{1}{N}\sum_{n=1}^N T^{[p_1(n)]}f_1\cdot\ldots\cdot T^{[p_\ell(n)]}f_\ell,\end{equation}
has the ``expected'' limit. In order to explain what is meant by ``expected'' limit, we need to recall the ergodicity and weakly mixing notions. $T$ is \emph{ergodic} if $T^{-1}A=A$ implies $\mu(A)\in \{0,1\}$; $T$ is \emph{weakly mixing} if $T\times T$ is ergodic. Here, by ``expected'' limit we mean, in case $T$ is ergodic, that the limit is equal to $\prod_{i=1}^\ell \int f_i\;d\mu,$ whereas, in the general case, it is equal to $\prod_{i=1}^\ell \mathbb{E}(f_i|\mathcal{I}(T)),$ where  $\mathbb{E}(f_i|\mathcal{I}(T))$ is the conditional expectation of $f_i$ with respect to the $\sigma$-algebra of the $T$-invariant sets (notice here the connection to independence in probability). Furstenberg showed in \cite{Fu} that for a weakly mixing $T,$ \eqref{E:Furst} converges to the expected limit; under the same assumption on $T$, Bergelson showed in \cite{Be} that \eqref{E:Berg} converges to the same limit as well.  

We extend the distinctness property of the sequences of iterates of \eqref{E:Kar_Kouts} to sequences of real variable polynomials:

\begin{definition}[\cite{F4}]\label{D:Good}
The sequence $(p_N)_N,$ where $p_N\in \mathbb{R}[t], N\in \mathbb{N},$ is \emph{good} if the polynomials
have bounded degree and for every non-zero $\alpha\in \mathbb{R}$ we have
\begin{equation}\label{E:good}
\lim_{N\to\infty} \frac{1}{N}\sum_{n=1}^N e^{ip_N(n)\alpha}=0.
\end{equation}
The sequence of $\ell$-tuples of variable polynomials $(p_{1,N}, \ldots, p_{\ell,N})_N,$ where $p_{i,N} \in \mathbb{R}[t],$ $N \in\mathbb{N},$
$1\leq i \leq \ell,$ is \emph{good} if every non-trivial linear combination of the sequences $(p_{1,N})_N, \ldots, (p_{\ell,N})_N$ is good.
\end{definition}



\begin{example}[\cite{F4}]\label{Ex:1} {\rm
 For $\ell=2,$ the pair $(p_{1,N}, p_{2,N})_N,$ where $p_{1,N}(n)=n/N^a,$ $p_{2,N}(n)=n/N^b,$ $N,n\in \mathbb{N},$ $0<a<b<1,$ is good.}
 \end{example}

\begin{example}[\cite{F4}]\label{Ex:2} {\rm For $\ell\in \mathbb{N},$ the $\ell$-tuple $(p_{1,N},\ldots,p_{\ell,N})_N,$ where $p_{i,N}(n)=n^i/N^a,$ $1\leq i\leq \ell,$ $N,n\in \mathbb{N},$ $0<a<1,$ is good.}
\end{example}

For the class of good polynomial sequences, Frantzikinakis stated the following problem:

\begin{problem}[Problem 10, \cite{F4}]\label{P:10}
Let $(p_{1,N}, \ldots, p_{\ell,N})_N$ be a good $\ell$-tuple of variable polynomials. Is it true that, for every ergodic system $(X,\mathcal{B},\mu,T)$ and functions $f_1,\ldots,f_\ell\in L^\infty(\mu),$ we have
\[\lim_{N\to\infty}\frac{1}{N}\sum_{n=1}^N T^{[p_{1,N}(n)]}f_1\cdot\ldots\cdot T^{[p_{\ell,N}(n)]}f_\ell= \int f_1\;d\mu\cdot\ldots\cdot\int f_\ell\;d\mu  \;?\]
\end{problem}


As stated in \cite{F4}, Problem~\ref{P:10} is interesting even in the special cases of Examples~\ref{Ex:1} and ~\ref{Ex:2}.


Showing that \eqref{E:Frang} has the same limit as \eqref{E:Furst} for $p\in \mathbb{R}[t]$ with $p(t)\neq cq(t)+d,$ $c,d\in \mathbb{R},$ $q\in \mathbb{Q}[t],$ which follows from \cite[Theorem~2.2]{F2}, one comes to the following problem, which is a natural generalization of Frantzikinakis' result to good-variable-polynomials:

\begin{problem}\label{P:2}
Let $(p_N)_N$ be a good polynomial sequence. Is it true that, for every $\ell\in \mathbb{N},$ system $(X,\mathcal{B},\mu,T),$ and $f_1,
\ldots,f_\ell\in L^\infty(\mu),$ we have
\begin{equation*}
 \begin{split}
&\lim_{N\to\infty}\frac{1}{N}\sum_{n=1}^N T^{[p_N(n)]}f_1\cdot T^{2[p_N(n)]}f_2\cdot\ldots\cdot T^{\ell[p_N(n)]}f_\ell \\
=&\lim_{N\to\infty}\frac{1}{N}\sum_{n=1}^N T^{n}f_1\cdot T^{2n}f_2\cdot\ldots\cdot T^{\ell n}f_\ell \;?
\end{split}
\end{equation*}
\end{problem}

As mentioned in \cite{F4}, the $\ell=1$ case of Problem~\ref{P:10} (which also coincides with the $\ell=1$ case of Problem~\ref{P:2})
can be easily obtained by using the spectral theorem. For general $\ell\in \mathbb{N},$ we make progress towards the solution of both Problems~\ref{P:10} and \ref{P:2}. In particular, under some additional assumptions on the coefficients of the good variable polynomials, we show two general results: Theorems~\ref{T:main} and ~\ref{T:mc}. In this introductory section, we will present an easier application of each of them, which still covers both Examples~\ref{Ex:1} and ~\ref{Ex:2}.




To this end, we first recall the set of sublinear logarithmico-exponential Hardy field functions (of polynomial degree $0$) which converge (as $x\to\infty$) to ($\pm$) infinity:\footnote{ Let $R$ be the collection of equivalence classes of real valued functions defined on some halfline $(c,\infty),$ $c\geq 0,$ where two functions that agree eventually are identified. These classes are called \emph{germs} of functions.  A \emph{Hardy field} is a subfield of the ring $(R, +, \cdot)$ that is closed under differentiation. Here, we use the word \emph{function} when we refer to elements of $R$ (understanding that all the operations defined and statements made for elements of $R$ are considered only for sufficiently large $x\in \mathbb{R}$). We say that $g$ is a \emph{logarithmico-exponential Hardy field function}, and we write $g\in \mathcal{LE},$ if it belongs to a Hardy field of real valued functions and it is defined on some $(c,+\infty),$ $c\geq 0,$ by a finite combination of symbols $+, -, \times, \div, \sqrt[n]{\cdot}, \exp, \log$ acting on the real variable $x$ and on real constants. For more on Hardy field functions, see \cite{F3, F2, Hard}.}
\[\mathcal{SLE}:=\{g\in \mathcal{LE}:\;1\prec g(x)\prec x\}\]
(we write $g_2\prec g_1\text{ if }|g_1(x)|/|g_2(x)|\to \infty\text{ as }x\to\infty$).
Next, we define an appropriate set of coefficients: For $g_i\in \mathcal{SLE},$ $1\leq i\leq l,$ $l\in\mathbb{N},$ with $g_l\prec \ldots \prec g_1,$\footnote{ The different growth relation between the $g_i$'s, $1\leq i\leq l,$ is postulated to avoid cases as, e.g., $(N+1)^{-1/2}-N^{-1/2}\sim N^{-3/2},$ since $g(x)=x^{3/2}$ is not sublinear (here we write $g_2\sim g_1$ if $g_1(x)/g_2(x)\to c\in \mathbb{R}\setminus\{0\}$).} let  $\mathcal{C}(g_1,\ldots,g_l)$ be the set of all linear combinations of reciprocals of the $g_i$'s, i.e.,
\[\mathcal{C}(g_1,\ldots,g_l):=\left\{\sum_{i=1}^{l}\frac{\rho_i}{g_i}:\;\rho_i\in \mathbb{R}\right\}.\]

Extending the definition from \cite{KK}, we say that the sequence of $\ell$-tuple of variable polynomials $(p_{1,N},\ldots,p_{\ell,N})_N,$ where for each $1\leq i\leq \ell,$ $p_{i,N}$ has the form:
\begin{equation}\label{E:polyv}
	\begin{split}
	&\quad p_{i,N}(n)=a_{i,d_i,N}n^{d_i}+\dots+a_{i,1,N}n+a_{i,0,N},
	\\
	& \text{with}\;\;(a_{i,0,N})_N\;\;\text{bounded, and}\;\;a_{i,j,\cdot}\in \mathcal{C}(g_1,\ldots,g_l),\;1\leq j\leq d_i,
	\end{split}
    \end{equation}
is \emph{strongly independent} if
for any $(\lambda_1,\ldots,\lambda_\ell)\in \mathbb{R}^{\ell}\setminus\{\vec{0}\}$ we have that
$\sum_{i=1}^\ell \lambda_i p_{i,N}(n)$ is a non-constant polynomial in $n$. 
For example, the following triple of variable polynomials is strongly independent:
\[\left(\left(\frac{\sqrt{2}}{g_3(N)}+\frac{1}{g_4(N)}\right)n^3-\frac{31}{g_5(N)}n+1,\frac{1}{g_3(N)}n^3,\left(\frac{\sqrt{3}}{g_1(N)}-\frac{17}{g_2(N)}\right)n^2\right)_N,\]
where $g_5(x):=\log x\prec g_4(x):=\log x\cdot \log\log x\prec g_3(x):=x^{1/2}\prec g_2(x):=x^{\pi/4}\log^{3/2}x\prec g_1(x):=x^{e/3}.$\footnote{ If $p_{1,N}(n):=\Big(\frac{\sqrt{2}}{g_3(N)}+\frac{1}{g_4(N)}\Big)n^3-\frac{31}{g_5(N)}n+1,$ $p_{2,N}(n):=\frac{1}{g_3(N)}n^3,$ and $p_{3,N}(n):=\Big(\frac{\sqrt{3}}{g_1(N)}-\frac{17}{g_2(N)}\Big)n^2,$ we have that $\lambda_1p_{1,N}+\lambda_2p_{2,N}+\lambda_3p_{3,N}$ is constant only when $\lambda_1=\lambda_2=\lambda_3=0.$}
Regarding Problem~\ref{P:10}, i.e., multiple variable polynomial sequences, we have the following result:

\begin{theorem}\label{T:main_new}
For $\ell\in \mathbb{N},$ let $(p_{1,N}, \ldots, p_{\ell,N})_N$ be a strongly independent $\ell$-tuple of polynomials as in \eqref{E:polyv}. Then, for every ergodic system $(X,\mathcal{B},\mu,T)$ and $f_1,\ldots,f_\ell\in L^\infty(\mu),$ we have
\begin{equation*}\lim_{N\to\infty}\frac{1}{N}\sum_{n=1}^N T^{[p_{1,N}(n)]}f_1\cdot\ldots\cdot T^{[p_{\ell,N}(n)]}f_\ell=\int f_1\;d\mu\cdot\ldots\cdot\int f_\ell\;d\mu.\end{equation*}
\end{theorem}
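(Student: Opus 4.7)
The plan is to present Theorem~\ref{T:main_new} as a direct corollary of the paper's general convergence statement (Theorem~\ref{T:main}). That result should take as input an $\ell$-tuple of variable polynomials whose coefficients have the prescribed Hardy-field form \eqref{E:polyv} and which is good in the sense of Definition~\ref{D:Good}, and output convergence of the average to $\prod_i \mathbb{E}(f_i\mid \mathcal{I}(T))$; under ergodicity this collapses to $\prod_i \int f_i\,d\mu$, which is what we want. The coefficient hypothesis is built into the statement, so the only genuine check is goodness.

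To verify goodness, fix $(\lambda_1,\ldots,\lambda_\ell)\neq \vec 0$ and a non-zero $\alpha\in \mathbb{R}$, and set $q_N(n):=\alpha\sum_{i=1}^\ell \lambda_i p_{i,N}(n)$. Strong independence ensures $q_N$ is a polynomial of degree $d\geq 1$ in $n$. Each of its non-constant coefficients has the form $\sum_{k=1}^l \rho_k/g_k(N)$, and I would let $k^{\star}$ be the smallest $k$ for which $\rho_k\neq 0$ at the highest surviving power of $n$; then the leading coefficient of $q_N$ is asymptotic to a non-zero multiple of $1/g_{k^{\star}}(N)$. Iterating van der Corput's inequality $d-1$ times reduces the goodness condition \eqref{E:good} to the decay of a linear exponential average with frequency $c_N\sim C/g_{k^{\star}}(N)$. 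Since $g_{k^{\star}}\in \mathcal{SLE}$ is sublinear, $c_N\to 0$ but $N|c_N|\to\infty$, so the elementary geometric-series bound
\begin{equation*}
\left|\frac{1}{N}\sum_{n=1}^N e^{ic_N n}\right|\leq \frac{2}{N|c_N|}
\end{equation*}
yields the required decay, and goodness follows.

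With the input verified, the real content is Theorem~\ref{T:main}, which I expect to be proved along the now-standard Frantzikinakis route. The key steps should be: approximating the integer-part iterate $T^{[p_{i,N}(n)]}$ by Fej\'er-kernel smoothings against $T^{p_{i,N}(n)}$-type operators; using van der Corput / PET reductions to show that a Host--Kra nilfactor is a characteristic factor for the multiple average, \emph{uniformly} in $N$; and finally establishing a Leibman-type equidistribution theorem on product nilmanifolds for the orbits $(n\mapsto p_{1,N}(n)\cdot \Gamma_1,\ldots,p_{\ell,N}(n)\cdot\Gamma_\ell)$, where strong independence is exactly what kills the horizontal characters that would otherwise obstruct equidistribution. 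I expect the main obstacle to lie in this last step: obtaining a quantitative, $N$-uniform equidistribution statement for variable polynomial trajectories with slowly decaying Hardy-field coefficients, as opposed to the classical fixed-polynomial case handled by Leibman. Once uniform equidistribution is in hand, passing back through the characteristic-factor reduction and invoking ergodicity produces the stated product of integrals.
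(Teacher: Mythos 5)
Your reduction to Theorem~\ref{T:main} and your verification of goodness track the paper's approach closely (the paper's Lemma~\ref{L:are_good} is essentially your geometric-series-plus-vdC argument), but there is a genuine gap: Theorem~\ref{T:main} requires the $\ell$-tuple to be both \emph{good} and \emph{super nice}, and you only verify goodness. You write that ``the coefficient hypothesis is built into the statement, so the only genuine check is goodness,'' but super niceness (Definition~\ref{D:super_nice}) is \emph{not} automatic for polynomials of the form \eqref{E:polyv}. It is a quantitative condition on the coefficients surviving repeated vdC-operations — requiring that the resulting linear leading coefficients have the $R_k$-property (Definition~\ref{D:R_property}), and the same for the shifted family $(\mathcal{P}'_N)_N$ — and the paper devotes a separate, fairly intricate Lemma~\ref{L:are_super_nice} to verifying it. That proof tracks the Hardy-field asymptotics $a_{i,\cdot}\in\mathcal{C}(g_1,\ldots,g_l)$ step by step through the PET recursion and shows the iterated ratios stabilize to quantities $\sim a_{j,N}/a_{t,N}$ that are bounded and eventually monotone. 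Without some version of this argument, you cannot invoke Theorem~\ref{T:main}.

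Your intuition about where the difficulty lies is also somewhat inverted relative to the paper. You predict the bottleneck is an $N$-uniform equidistribution statement for variable polynomial orbits; but in the paper's framework, goodness alone already suffices for the equidistribution half (Theorem~\ref{T:T1} needs only goodness, via Green--Tao and the smoothness-norm criterion). It is the \emph{characteristic factor} reduction (Proposition~\ref{L:characteristic_factor}) that requires the extra hypothesis — the $R_k$-property is exactly what makes the ``change of variables'' Lemma~\ref{L:comparison} applicable at each stage of the PET induction when the polynomial coefficients vary with $N$. This is also why Problem~\ref{P:cp} (whether goodness alone gives the characteristic factor) remains open. A minor slip: with the ordering $g_l\prec\cdots\prec g_1$ used in the paper, the dominant reciprocal $1/g_{k}$ corresponds to the \emph{largest} surviving index $k$, not the smallest; this does not affect the conclusion of your goodness check but you should state it correctly.
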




Regarding
Problem 2, our result is the following theorem:

\begin{theorem}\label{T:mc_new}
Let $(p_N)_N$ be a polynomial sequence as in \eqref{E:polyv} with $p_N(n)$ non-constant in $n$. Then, for every $\ell\in \mathbb{N},$ system $(X,\mathcal{B},\mu,T),$ and $f_1,
\ldots,f_\ell\in L^\infty(\mu),$ we have
\begin{equation*}
 \begin{split}
&\lim_{N\to\infty}\frac{1}{N}\sum_{n=1}^N T^{[p_N(n)]}f_1\cdot T^{2[p_N(n)]}f_2\cdot\ldots\cdot T^{\ell[p_N(n)]}f_\ell\\
=&\lim_{N\to\infty}\frac{1}{N}\sum_{n=1}^N T^{n}f_1\cdot T^{2n}f_2\cdot\ldots\cdot T^{\ell n}f_\ell.
\end{split}
\end{equation*}
\end{theorem}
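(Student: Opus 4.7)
The plan is to show that both averages converge to the same limit by passing to a common characteristic factor and then comparing the resulting nilsequence averages. As a first step, I would invoke the Host--Kra structure theorem to argue that the nilfactor $Z_{\ell-1}(X)$ is characteristic for both expressions: for the right-hand side this is classical, and for the left-hand side the same conclusion should follow from a van der Corput--style reduction exploiting the fact that the iterates $[p_N(n)], 2[p_N(n)], \ldots, \ell[p_N(n)]$ are pairwise ``distinct enough'' when $p_N(n)$ is non-constant in $n$ (the scalar multiples $\lambda \cdot p_N$ remain non-constant polynomials of the same shape \eqref{E:polyv}). By standard inverse-limit and density arguments, one may then assume $(X,T)$ is an ergodic nilsystem $G/\Gamma$ with translation by some $b \in G$.

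On the nilmanifold, I would bundle the $\ell$ factors into a single function on the product $X^\ell$. Setting $F(x_1,\ldots,x_\ell) := f_1(x_1)\cdots f_\ell(x_\ell)$ and $c := (b, b^2, \ldots, b^\ell) \in G^\ell$, both averages acquire the form $\frac{1}{N}\sum_{n=1}^N F(c^{m_n} \cdot y_0)$, with $m_n = [p_N(n)]$ on the left and $m_n = n$ on the right. Leibman's equidistribution theorem identifies the right-hand limit with the integral of $F$ over the closure of the orbit $\{c^n y_0\}$, so it suffices to show that the left-hand average converges to the same integral.

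Via Leibman's criterion, this equidistribution statement reduces -- after writing $[p_N(n)] = p_N(n) - \{p_N(n)\}$ and approximating the fractional part by trigonometric polynomials -- to a test on horizontal characters of $G^\ell$, namely to verifying $\frac{1}{N}\sum_{n=1}^N e^{2\pi i \alpha p_N(n)} \to 0$ for every nonzero $\alpha \in \mathbb{R}$. In other words, the key input is that polynomial sequences of the form \eqref{E:polyv} with $p_N$ non-constant in $n$ are \emph{good} in the sense of Definition~\ref{D:Good}. This last claim is a uniform Weyl-type estimate which should follow from an iterated van der Corput argument together with the Hardy field ordering $g_l \prec \ldots \prec g_1$: the dominant non-vanishing coefficient of $\alpha p_N$ cannot be cancelled by the subdominant ones, and since each $g_i$ is sublinear, the quantity $|\alpha a_{j,N}| N^j$ tends to infinity, providing the cancellation required for equidistribution.

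The main obstacle will be this Weyl-sum estimate. Since the coefficients $a_{j,N}$ decay to zero as $N \to \infty$, one cannot simply apply the classical single-polynomial equidistribution theorem to each $p_N$ and pass to the limit; instead, one must iterate van der Corput's inequality $\deg p_N$ times while keeping precise track of how the decay rates of the $a_{j,N}$ interact with the $1/\sqrt{N}$-type gains at each step, and the ordering $g_l \prec \ldots \prec g_1$ is exactly what prevents destructive cancellation among the reciprocals. Once the good property is secured, the reductions to nilmanifolds and to characteristic factors should fall into line with the methodology familiar from \cite{F2, KK}, and in fact this theorem should emerge as a direct specialization of the general Theorem~\ref{T:mc}.
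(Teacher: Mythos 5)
Your proposal captures the correct high-level architecture and correctly recognizes that this statement should specialize from the general Theorem~\ref{T:mc}: show the nilfactor is characteristic, reduce via the Host--Kra structure theorem to a nilsystem, bundle into a function on $X^\ell$ and a translation $(b, b^2, \ldots, b^\ell)$, and test equidistribution by reducing to the Weyl sum $\frac{1}{N}\sum_n e^{2\pi i\alpha p_N(n)}\to 0$. The treatment of the goodness property is also on the right track: for degree one it is a direct geometric-series computation using $h(N)\to 0$ and $|h(N)|N\to\infty$, and for higher degree one iterates van der Corput. This matches Lemma~\ref{L:are_good} of the paper.

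However, you have misplaced where the real difficulty lies, and this creates a genuine gap. You call the Weyl-sum estimate ``the main obstacle'' and dismiss the characteristic-factor step with the remark that it ``should follow from a van der Corput--style reduction exploiting the fact that the iterates are pairwise distinct enough.'' But for variable polynomial coefficients the standard PET/vdC argument breaks down: after composing with some $T^{-[p_N(n)]}$ and differencing, a term such as $[a_N(n+h)]-[a_N n]\approx[a_N h]$ is no longer a constant depending only on $h$ --- it is an $N$-dependent quantity that cannot be absorbed into bounded error terms. This is exactly why the paper introduces Lemma~\ref{L:comparison} (a ``change of variables'' replacing $\frac{1}{N}\sum_n c_{[a_NN]}([a_Nn])$ by $\frac{1}{N}\sum_n c_N(n)$), the $R_\ell$-property of Definition~\ref{D:R_property} (quantitative control of ratios of the variable leading coefficients under iterated vdC), and the ``super niceness'' condition in Definition~\ref{D:super_nice}, culminating in Propositions~\ref{P:Linear} and \ref{L:characteristic_factor}. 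Without this machinery your ``van der Corput--style reduction'' stalls at the very first differencing step. One also needs to verify, as in Lemma~\ref{L:are_super_nice}, that sequences of the form \eqref{E:polyv} are super nice (using that coefficients in $\mathcal{C}(g_1,\ldots,g_l)$ remain in this class under vdC and that the iterated ratios are $\sim 1$ or $\sim 1/g(N)$ with $1\prec g\prec x$). So the characteristic-factor reduction, not the Weyl sum, is where new ideas are required, and your sketch does not address it.

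A secondary, smaller point: you assert that the characteristic factor is $\mathcal{Z}_{\ell-1}(X)$. The paper only establishes that the full nilfactor $\mathcal{Z}=\bigvee_k\mathcal{Z}_k$ is characteristic (Propositions~\ref{L:characteristic_factor} and \ref{L:characteristic_single}); the PET induction produces a bound $2k$ for the seminorm index that depends on the number $k$ of linear terms appearing after exhaustion, not simply on $\ell$. This is harmless for the proof (one still reduces to a nilsystem via inverse limits), but the specific claim $\mathcal{Z}_{\ell-1}$ is not what the argument yields.
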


Very few convergence results for averages with polynomial iterates, in which we can explicitly find the limit, exist. Results for variable polynomials are even scarcer. We will conclude this introduction, by mentioning some of them. Kifer (\cite{K}) studied multiple averages for variable polynomials of the form $p_{i,N}(n)=p_i(n)+q_i(N),$ with $p_i$'s essentially distinct and $q_i(\mathbb{Z})\subseteq \mathbb{Z}$ for a weakly mixing transformation $T.$ Similarly, for more general polynomials, Kifer studied averages for strongly mixing ``enough'' transformations. Finally, Frantzikinakis (in \cite{F2}) found characteristic factors (see Definition~\ref{D:CF}) for averages with variable polynomial iterates $p_{i,N},$ with leading coefficients independent of $N$. It is the arguments from this article (\cite{F2}) that we will adapt, in order to find characteristic factors for the averages appearing in Theorems ~\ref{T:main_new} and ~\ref{T:mc_new} as well, which is one of the main two ingredients of the proof (the second one is the equidistribution of particular sequences for which we adapt arguments from \cite{F1}).

\subsection*{Notation} We denote by $\mathbb{N}=\{1,2,\ldots\},$ $\mathbb{Z},$ $\mathbb{Q},$ $\mathbb{R},$ and $\mathbb{C}$ the sets of natural, integer, rational, real and complex numbers respectively. For a function $f:X\to\mathbb{C}$ on a space $X$
with a transformation $T:X\to X,$ we denote by $Tf$ the composition $f\circ T.$  For $s\in \mathbb{N},$ $\mathbb{T}^s =\mathbb{R}^s/\mathbb{Z}^s $ denotes the $s$ dimensional torus. For  $a,b\geq 0$ we write $a\ll b,$ if there exists $C>0$ such that $a\leq C b.$

\section{Main results and applications}\label{S:applications}

Here we will state our most general results and some applications. For the proofs, we follow \cite{F3} and \cite{KK}, adapting the corresponding arguments to the variable polynomial case.


We first cover Problem~\ref{P:10} for a subclass of good polynomial sequences:

\begin{theorem}\label{T:main}
For $\ell\in \mathbb{N},$ let $(p_{1,N}, \ldots, p_{\ell,N})_N$ be a good and super nice\footnote{ The ``super niceness'' property is rather technical
and will be defined in Section~\ref{S:4}.} $\ell$-tuple of polynomials. Then, for every ergodic system $(X,\mathcal{B},\mu,T)$ and $f_1,\ldots,f_\ell\in L^\infty(\mu),$ we have
\begin{equation}\label{E:main33}\lim_{N\to\infty}\frac{1}{N}\sum_{n=1}^N T^{[p_{1,N}(n)]}f_1\cdot\ldots\cdot T^{[p_{\ell,N}(n)]}f_\ell=\int f_1\;d\mu\cdot\ldots\cdot\int f_\ell\;d\mu.\end{equation}
\end{theorem}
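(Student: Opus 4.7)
The plan is to follow the two-step strategy originating in \cite{Fu,Be} and refined in \cite{F2,KK}: identify a characteristic factor for the average that is a nilsystem, and then prove joint equidistribution of the iterates on that nilsystem. The unknown ``super nice'' hypothesis will be chosen precisely to make both steps succeed simultaneously for variable polynomials.

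First I would run a PET--type van der Corput induction on the degrees and complexity of the tuple $(p_{1,N},\ldots,p_{\ell,N})_N$. At each step one applies van der Corput's lemma to the outer $n$-average and replaces the product $\prod_i T^{[p_{i,N}(n)]}f_i$ by a new product of conjugate functions with iterates of the form $[p_{i,N}(n+h)]-[p_{j_0,N}(n+h)]$. The ``super nice'' hypothesis should be engineered so that this reduction preserves goodness, does not collapse iterates onto one another, and terminates in finitely many steps with iterates of degree~$1$, at which cost a bound of the form
\[
\Bigl\|\frac{1}{N}\sum_{n=1}^N T^{[p_{1,N}(n)]}f_1\cdots T^{[p_{\ell,N}(n)]}f_\ell\Bigr\|_{L^2}^{2^k}\ll \nnorm{f_{i_0}}_{s}
\]
for some $i_0$, where $\nnorm{\cdot}_s$ denotes a Host--Kra seminorm. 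This identifies the Host--Kra factor $\mathcal{Z}_s$ as characteristic, exactly as in \cite[Section~3]{F2}. Using the Host--Kra structure theorem, each $\mathcal{Z}_s$ is an inverse limit of $s$-step nilsystems, so a standard approximation argument reduces the theorem to the case where $(X,\mathcal B,\mu,T)=(G/\Gamma,\mathcal B,m_{G/\Gamma},T_a)$ is an ergodic nilsystem and the $f_i$ are continuous.

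Next I would address the equidistribution on nilmanifolds, adapting the machinery of \cite{F1}. Fix $x\in G/\Gamma$; then the average in \eqref{E:main33} evaluated at $x$ equals
\[
\frac{1}{N}\sum_{n=1}^N F_1(a^{[p_{1,N}(n)]}x)\cdots F_\ell(a^{[p_{\ell,N}(n)]}x),
\]
and it suffices to show that the orbit $(a^{[p_{1,N}(n)]}x,\ldots,a^{[p_{\ell,N}(n)]}x)_{1\le n\le N}$ becomes equidistributed in the product nilmanifold $(G/\Gamma)^\ell$ (with respect to the product Haar measure) as $N\to\infty$, for a.e.\ $x$. By Leibman's equidistribution criterion it is enough to check this after projection to the maximal torus factor and after lifting to all horizontal characters; the integer parts are handled by the classical trick of smoothing $\mathbf 1_{[0,1)}$ with Fej\'er-type kernels and passing the error to $O(\varepsilon)$ pointwise. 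Unpacking a horizontal character then produces, for each nontrivial $(k_1,\ldots,k_\ell)\in\mathbb Z^\ell$, an exponential sum of the form $N^{-1}\sum_{n=1}^N e^{2\pi i\sum_i k_i p_{i,N}(n)+o(1)}$, which tends to $0$ by the goodness of the $\ell$-tuple applied to the nontrivial combination $\sum_i k_i p_{i,N}$.

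The main obstacle will be the PET reduction in the variable-polynomial setting. In the ordinary polynomial case the family of iterates is a finite set of polynomials that one orders by degree; for variable polynomials the coefficients depend on $N$ and are allowed to degenerate or nearly coincide, so one must control these degenerations uniformly in $N$. This is exactly what the ``super nice'' condition should encode, and verifying that goodness plus super niceness is stable under the elementary operations $p_{i,N}(n+h)-p_{j,N}(n+h)$, $p_{i,N}(n+h)-p_{i,N}(n)$, etc., arising at each van der Corput step, is the delicate bookkeeping at the heart of the proof. Once that bookkeeping is completed, the characteristic-factor step of \cite{F2} and the equidistribution step of \cite{F1} combine to give the stated limit $\prod_{i=1}^\ell \int f_i\,d\mu$.
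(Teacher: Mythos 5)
Your two-step strategy is exactly the one the paper uses: first a PET/van der Corput reduction (Proposition~\ref{L:characteristic_factor}), where the ``super nice'' hypothesis encodes precisely the boundedness and monotonicity ($R_k$-type) conditions on the leading coefficients obtained after repeated vdC-operations that make the base linear case (Lemma~\ref{L:base_case_semi} and Proposition~\ref{P:Linear}) go through via a change-of-variables lemma, showing the nilfactor $\mathcal{Z}$ is characteristic; then an equidistribution theorem on product nilmanifolds (Theorem~\ref{T:T1}) proved via the quantitative Green--Tao criterion, with integer parts handled by lifting to $\mathbb{R}\times G$. One detail to correct in the equidistribution step: unpacking a horizontal character of $X^\ell$ does not produce a combination $\sum_i k_i p_{i,N}$ with integers $k_i$; it produces $\sum_i\bigl(\sum_j\lambda_{i,j}\tilde\beta_{i,j}\bigr)p_{i,N}(n)$, where the $\lambda_{i,j}\in\mathbb{Z}$ but the $\tilde\beta_{i,j}$ are irrational, rationally independent frequencies from the projection of $b$ to the horizontal torus, so the effective coefficients on the $p_{i,N}$ are genuine real numbers --- this is precisely why \eqref{E:good} must hold for all nonzero $\alpha\in\mathbb{R}$ (not just integers), and why ergodicity of the nilrotation is invoked to ensure the combination is non-trivial.
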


We also cover the following case of Problem~\ref{P:2}:

\begin{theorem}\label{T:mc}
Let $(p_N)_N\subseteq \mathbb{R}[t]$ be a good polynomial sequence such that, for all $\ell\in \mathbb{N},$ $(p_{N},2p_N,$ $\ldots,\ell p_N)_N$ is super nice. Then, for every $\ell\in \mathbb{N},$ system $(X,\mathcal{B},\mu,T),$ and $f_1,
\ldots,f_\ell\in L^\infty(\mu),$ we have
\begin{equation}\label{E:mc3}
 \begin{split}
&\lim_{N\to\infty}\frac{1}{N}\sum_{n=1}^N T^{[p_N(n)]}f_1\cdot T^{2[p_N(n)]}f_2\cdot\ldots\cdot T^{\ell[p_N(n)]}f_\ell\\
=&\lim_{N\to\infty}\frac{1}{N}\sum_{n=1}^N T^{n}f_1\cdot T^{2n}f_2\cdot\ldots\cdot T^{\ell n}f_\ell.
\end{split}
\end{equation}
\end{theorem}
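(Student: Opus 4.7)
My plan is to adapt the strategy of Frantzikinakis from \cite{F2} to the variable polynomial setting, keeping the same two-ingredient structure (characteristic factors plus equidistribution on nilmanifolds) that is indicated as the blueprint at the end of the introduction. First I would establish that an appropriate Host--Kra type nilfactor $\mathcal{Z}_k(T)$, with $k$ depending only on $\ell$, is characteristic for both sides of \eqref{E:mc3}. For the right-hand side this is the well-known result of Host--Kra and Ziegler. For the left-hand side this is the content of the characteristic-factor step in the proof of Theorem~\ref{T:main}; the super niceness of the tuple $(p_N, 2 p_N, \ldots, \ell p_N)_N$ is precisely the hypothesis tailored so that the PET / van der Corput reduction-of-complexity procedure runs uniformly in $N$. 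Replacing each $f_i$ by $\mathbb{E}(f_i \mid \mathcal{Z}_k(T))$ on both sides and invoking the ergodic decomposition and standard approximation, matters reduce to the case where $(X,\mathcal{B},\mu,T)$ is an ergodic nilsystem $X = G/\Gamma$ with $T = T_a$ for some $a \in G$, and the $f_i$ are continuous.

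On such a nilsystem, Leibman's equidistribution theorem applied to the linear sequence $(n, 2n, \ldots, \ell n)$ shows that, for $\mu$-a.e.\ $x$, the orbit $(a^n x, a^{2n} x, \ldots, a^{\ell n} x)_n$ equidistributes on a closed sub-nilmanifold $Y(x) \subseteq X^\ell$ with respect to its Haar measure, and this identifies the right-hand limit of \eqref{E:mc3}. It then suffices to prove that, for $\mu$-a.e.\ $x$, the variable-polynomial orbit
\[
\bigl(a^{[p_N(n)]} x,\, a^{2 [p_N(n)]} x,\, \ldots,\, a^{\ell [p_N(n)]} x\bigr)_{1 \le n \le N}
\]
equidistributes on the \emph{same} $Y(x)$ in the double limit $N \to \infty$ of the $n$-averages. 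This is where the arguments of \cite{F1} come in.

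For the equidistribution I would use a Weyl-type criterion on $Y(x)$: it is enough to show, for every non-trivial horizontal character $\chi$ of $Y(x)$, that
\[
\lim_{N \to \infty} \frac{1}{N} \sum_{n=1}^N e\bigl(\chi \cdot \Phi_x([p_N(n)])\bigr) = 0,
\]
where $\Phi_x$ parameterizes the polynomial orbit via a Mal'cev basis of $G$. Expanding through the nilpotent structure, $\chi \cdot \Phi_x([p_N(n)])$ is a polynomial of bounded degree in $[p_N(n)]$ whose coefficients depend on $a$, $\chi$, $x$. A Fej\'er-kernel smoothing removes the integer part at the cost of negligible error, reducing the decay to the goodness of every non-trivial linear combination of $(p_N)_N, (2 p_N)_N, \ldots, (\ell p_N)_N$, that is, to the goodness of $((k_1 + 2 k_2 + \cdots + \ell k_\ell) p_N)_N$ for each nonzero integer vector $(k_1, \ldots, k_\ell)$. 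This is exactly what the goodness of $(p_N)_N$ gives, with the super niceness hypothesis controlling the low-complexity coefficient components of $p_N$ that might otherwise obstruct Weyl decay uniformly in $N$.

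The main obstacle will be making this equidistribution step uniform in $N$. In the fixed-polynomial framework of \cite{F2} one can quote Leibman's equidistribution theorem on nilmanifolds directly, since the iterates are a single polynomial sequence in $n$. Here the polynomial depends on $N$ and its non-constant coefficients degenerate to zero, so the statement is genuinely two-parameter and must be established from scratch via Weyl / van der Corput estimates. The super niceness assumption is engineered precisely to furnish the quantitative non-degeneracy of the coefficient hierarchy along the $g_i$'s needed to push these estimates through uniformly; verifying the Weyl estimate with this hypothesis, in the presence of the integer parts, will be the delicate point.
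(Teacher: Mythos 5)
Your two-ingredient blueprint (characteristic factor $+$ equidistribution on nilmanifolds) matches the paper, and the first step is carried out exactly as you describe: Proposition~\ref{L:characteristic_single} (from Proposition~\ref{L:characteristic_factor}) shows $\mathcal{Z}$ is characteristic, the Host--Kra structure theorem reduces to a nilsystem, and one can take the $f_i$ continuous. But you have misattributed the roles of the two hypotheses in the equidistribution step, and this misattribution would send you down a dead end. In the paper, \emph{super niceness plays no role whatsoever in the equidistribution step}: it is used only to run the PET/van der Corput reduction uniformly in $N$ for the characteristic-factor argument. The Weyl-type decay you need at the end is supplied \emph{entirely} by the goodness of $(p_N)_N$, i.e.\ by the hypothesis that $\frac1N\sum_{n\le N} e(\alpha p_N(n))\to 0$ for every $\alpha\neq 0$. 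If you search for a way to inject super niceness into the quantitative equidistribution estimate to ``control low-complexity coefficient components,'' you will not find one, because none is needed.

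Two further points on the equidistribution half. First, the paper does not re-derive a Weyl sum bound ``from scratch'': it quotes the Green--Tao quantitative equidistribution theorem for polynomial orbits on nilmanifolds (Theorem~\ref{T:t1}). Arguing by contradiction, failure of $\delta$-equidistribution forces a non-trivial horizontal character $\chi$ with $\norm{\chi(g(n))}_{C^\infty[N]}$ bounded, which in turn forces $\max_j N^j\norm{c_{j,N}}$ to stay bounded; but goodness forces this maximum to blow up. This finite-$N$ contrapositive is exactly what makes the variable-$N$ problem tractable, and it is cleaner than trying to prove Weyl decay directly. Second, your proposed Fej\'er-kernel smoothing to strip the integer part is not how the paper handles $[p_N(n)]$, and it is not clear it would go through inside a nilmanifold (the quantity $\chi\cdot\Phi_x([p_N(n)])$ is not simply $\alpha[p_N(n)]$, so smoothing the fractional part does not commute with the nilpotent polynomial structure). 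The paper instead uses the suspension nilmanifold $\tilde X = (\mathbb{R}\times G)/(\mathbb{Z}\times\Gamma)$ with $\tilde b=(1,b)$ (Lemma~\ref{L:5.1}), encoding the integer part by the map $(s\mathbb{Z}, g\Gamma)\mapsto b^{-\{s\}}g\Gamma$; the passage from real-time to integer-time equidistribution then comes for free. Finally, a small but useful simplification you miss: rather than first identifying $Y(x)$ via Leibman and then matching orbits, the paper works on $X^\ell$ with the diagonal rotation $\tilde b=(b,b^2,\ldots,b^\ell)$ and the single sequence $(p_N)_N$, applying the $\ell=1$ case of Theorem~\ref{T:T1}(ii); both $(\tilde b^{[p_N(n)]}\tilde x)_n$ and $(\tilde b^n\tilde x)_n$ then equidistribute on the same closure $\overline{(\tilde b^n\tilde x)}_n$, and the two limits coincide with no need to compute $Y(x)$ explicitly.
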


We will show that Theorem~\ref{T:main} implies Theorem~\ref{T:main_new} (resp. Theorem~\ref{T:mc} implies Theorem~\ref{T:mc_new}), and that it holds for any polynomial family $\{p_1,\ldots,p_\ell\}$ (resp. $\{p,2p,\ldots,\ell p\}$ for Theorem~\ref{T:mc}) which is independent of $N$ and for which non-trivial linear combinations of its members satisfy \eqref{E:good}.
In particular, it generalizes \cite[Theorem~2.1]{KK} for strongly independent polynomials (the same is true for Theorem~\ref{T:mc} for the single polynomial case).

The approach we follow to show these results is similar to the one in \cite{F2,KK}, with a few extra twists. 
Namely, one has to find the characteristic factors of \eqref{E:main33} and \eqref{E:mc3}, 
and show some equidistribution results in nilmanifolds. 
The ``super niceness'' property (Definition~\ref{D:super_nice}) will be introduced so we can deal with the former, while the ``goodness'' property (Definition~\ref{D:Good}) implies the latter.

As was mentioned in the previous section, the ergodicity assumption in Theorem~\ref{T:main} can be dropped.\footnote{ The limit in this case is equal to $\prod_{i=1}^\ell \mathbb{E}(f_i|\mathcal{I}(T)).$ Indeed, if $\mu=\int \mu_t\;d\lambda(t)$ denotes the ergodic decomposition of $\mu,$ it suffices to show that if $\mathbb{E}(f_i\vert \mathcal{I}(T))=0$ for some $i,$ then the averages converge to $0.$ Since $\mathbb{E}(f_i\vert \mathcal{I}(T))=0,$ we have that $\int f_i \; d\mu_t=0$ for $\lambda$-a.e. $t.$ By \eqref{E:main33}, we have that the averages go to $0$ in $L^2(\mu_t)$ for $\lambda$-a.e. $t,$ hence the limit is equal to $0$ in $L^2(\mu)$ by the Dominated Convergence Theorem.} Hence, the theorems hold for any system; their strong nature is also reflected in the fact that they have immediate recurrence 
and combinatorial implications which we discuss next.






\subsection{Single sequence consequences}\label{Ss:2.1} We first deal with a single variable polynomial sequence, assuming the validity of Theorem~\ref{T:mc}.

\subsubsection{Recurrence} The following theorem due to Furstenberg will help us obtain recurrence results:

\begin{theorem}[Furstenberg Multiple Recurrence Theorem, ~\cite{Fu}]\label{T:FMR}
Let $(X,\mathcal{B},\mu,T)$ be a system. Then, for every $\ell\in \mathbb{N}$ and every set $A\in \mathcal{B}$ with $\mu(A)>0,$ we have
\[\liminf_{N\to\infty}\frac{1}{N}\sum_{n=1}^N \mu(A\cap T^{-n}A\cap T^{-2n}A\cap\ldots\cap T^{-\ell n}A)>0.
\]
\end{theorem}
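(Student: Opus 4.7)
The plan is to follow Furstenberg's original ergodic-theoretic strategy via his structure theorem for measure-preserving systems. First, via the ergodic decomposition $\mu=\int \mu_t\,d\lambda(t)$, I would reduce to the ergodic case: since $\mu(A)>0$ forces $\mu_t(A)>0$ on a set of positive $\lambda$-measure, a Fatou-type inequality transfers a positive lower bound for the $\liminf$ from the ergodic fibers back to $(X,\mathcal{B},\mu,T)$. So assume $T$ is ergodic. Next, I would say that a system has the \emph{SZ property} (for the given $\ell$) if the conclusion of the theorem holds in it, and recast the goal as an inductive statement: SZ propagates along a Furstenberg tower.

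Furstenberg's structure theorem produces a (possibly transfinite) chain of factors $\mathcal{F}_0 \subset \mathcal{F}_1 \subset \cdots \subset \mathcal{F}_\eta \subset \cdots$, with $\mathcal{F}_0$ trivial, joins at limit ordinals, $\bigvee_\eta \mathcal{F}_\eta = \mathcal{B}$, and each successor step either a \emph{compact} or a \emph{weakly mixing} extension. The induction then rests on four lemmas: (i) the one-point system is vacuously SZ; (ii) inverse limits of SZ systems are SZ, by an $L^2$-approximation together with a diagonalization; (iii) weakly mixing extensions of SZ systems are SZ, established through a van der Corput / Hilbert-space style argument that asymptotically decouples $\prod_{i=0}^{\ell} T^{in}f_i$ from the base factor so that the intersection count can be pushed down to $Y$; and (iv) compact extensions of SZ systems are SZ. Combining these steps with transfinite induction along the tower yields SZ for the full system, giving the theorem.

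The main obstacle, as always in this line of argument, is step (iv). Over a compact extension $X\to Y$, one decomposes $L^2(\mu)$ into finite-rank $L^\infty(\nu)$-modules of functions that are almost periodic relative to $Y$, and approximates $1_A$ in $L^2$ by such a vector-valued ``colored'' function $\phi$ over $Y$. A van der Waerden-type combinatorial input (most cleanly the IP-van der Waerden theorem) applied to the colors of the orbit of $\phi$ under $T^n, T^{2n}, \ldots, T^{\ell n}$ produces, on a set of $n$ of positive lower density, a simultaneous near-return of all these iterates in the fiberwise norm. Coupling this fiberwise near-recurrence with the SZ hypothesis on the base $Y$, which furnishes a positive-density set of $n$ along which the projected set returns under $T^n,\ldots,T^{\ell n}$, gives a positive lower bound for $\mu(A\cap T^{-n}A\cap\cdots\cap T^{-\ell n}A)$ along a positive-density set of $n$, exactly what the $\liminf$ requires. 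This closes the induction and proves the theorem.
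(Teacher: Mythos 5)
The paper does not prove Theorem~\ref{T:FMR}; it is imported verbatim as a citation to Furstenberg's 1977 paper and used as a black box (together with the Correspondence Principle) to derive recurrence and combinatorial corollaries from the convergence theorems proved elsewhere in the text. There is therefore no ``paper's own proof'' to compare against.

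That said, your outline is an accurate high-level summary of the classical Furstenberg argument: reduce to the ergodic case by decomposing $\mu=\int\mu_t\,d\lambda(t)$ and applying Fatou; define the SZ property and set up a transfinite induction along the Furstenberg tower furnished by the structure theorem; and dispose of the four cases (trivial factor, inverse limits, relatively weakly mixing extensions, relatively compact extensions). The two substantive steps are described correctly in spirit. For the weakly mixing extension step, the precise mechanism is that if $X\to Y$ is a relatively weakly mixing extension and $\mathbb{E}(f_i\mid Y)=0$ for some $i$, then the multiple averages of $\prod T^{in}f_i$ vanish; this is typically proved by passing to the relative self-joining $X\times_Y X$ and exploiting relative weak mixing, and ``van der Corput / Hilbert-space decoupling'' is a reasonable shorthand for what happens there. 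For the compact extension step, your description via relatively almost-periodic approximants, fiberwise near-recurrence from a van der Waerden-type input, and coupling with the SZ hypothesis on the base is the standard route (Furstenberg's original proof uses the finitary van der Waerden theorem rather than its IP version, but either suffices). Of course, each of these four lemmas is itself a substantial piece of work, so the outline as written is a skeleton rather than a proof; but there is no wrong turn in it.
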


\begin{remark}
As we mentioned before, the liminf in the expression of Theorem~\ref{T:FMR} is actually a limit.
\end{remark}

Combining Theorem~\ref{T:mc} and Theorem~\ref{T:FMR} (with the choices $f_i:=1_A$),
we get the following corollary:

\begin{corollary}\label{T:mcr}
Let $(p_N)_N\subseteq\mathbb{R}[t]$ be as in Theorem~\ref{T:mc}.
Then, for every $\ell\in \mathbb{N},$ every system $(X,\mathcal{B},\mu,T),$ and every set $A\in \mathcal{B}$ with $\mu(A)>0,$ we have
\[\lim_{N\to\infty}\frac{1}{N}\sum_{n=1}^N \mu\left(A\cap T^{-[p_N(n)]}A\cap T^{-2[p_N(n)]}A\cap\ldots\cap T^{-\ell[p_N(n)]}A\right)>0.\]
\end{corollary}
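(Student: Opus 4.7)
The plan is to derive Corollary~\ref{T:mcr} as a direct consequence of Theorem~\ref{T:mc} combined with Furstenberg's Multiple Recurrence Theorem, via the standard ``test with indicator functions and pair against $1_A$'' trick. Since Theorem~\ref{T:mc} is assumed already proved, this is essentially a transfer statement; the only content is to connect the $L^2$-convergence of Theorem~\ref{T:mc} to the numerical convergence of intersection measures.

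Concretely, I would apply Theorem~\ref{T:mc} to the system $(X,\mathcal{B},\mu,T)$ with the choice $f_1 = f_2 = \cdots = f_\ell = \mathbf{1}_A \in L^\infty(\mu)$. This yields equality, in $L^2(\mu)$, of the two limits
\[
\lim_{N\to\infty}\frac{1}{N}\sum_{n=1}^N T^{[p_N(n)]}\mathbf{1}_A \cdots T^{\ell[p_N(n)]}\mathbf{1}_A \;=\; \lim_{N\to\infty}\frac{1}{N}\sum_{n=1}^N T^{n}\mathbf{1}_A \cdots T^{\ell n}\mathbf{1}_A.
\]
Multiplying both sides by $\mathbf{1}_A$ and integrating against $\mu$ is a continuous linear operation on $L^2(\mu)$, so the equality is preserved. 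Using the identity
\[
\int \mathbf{1}_A \cdot T^{k_1}\mathbf{1}_A \cdots T^{k_\ell}\mathbf{1}_A \, d\mu \;=\; \mu\!\left(A \cap T^{-k_1}A \cap \cdots \cap T^{-k_\ell}A\right),
\]
valid for any integers $k_1,\ldots,k_\ell$, the Cesàro average of $\mu\bigl(A \cap T^{-[p_N(n)]}A \cap \cdots \cap T^{-\ell[p_N(n)]}A\bigr)$ over $n=1,\ldots,N$ exists and coincides with the Cesàro average of $\mu\bigl(A \cap T^{-n}A \cap \cdots \cap T^{-\ell n}A\bigr)$.

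The final step is to quote Theorem~\ref{T:FMR}, which guarantees that the latter average has strictly positive $\liminf$ (and in fact a positive limit, by the remark immediately following it) whenever $\mu(A) > 0$. This yields the desired strict positivity of the limit. There is no genuine obstacle here beyond the routine $L^2$-continuity justification for interchanging the limit with the integration against $\mathbf{1}_A$; all the analytic difficulty has been absorbed into the proof of Theorem~\ref{T:mc}.
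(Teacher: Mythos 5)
Your proof is correct and is essentially identical to the paper's: the paper itself introduces Corollary~\ref{T:mcr} by saying it follows from ``Combining Theorem~\ref{T:mc} and Theorem~\ref{T:FMR} (with the choices $f_i:=1_A$),'' which is precisely the pairing-against-$\mathbf{1}_A$ argument you wrote out. No differences worth noting.
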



\subsubsection{Combinatorics} Via Furstenberg's Correspondence Principle, one gets combinatorial results from recurrence ones. We present here a reformulation of this principle from \cite{B0}.

\begin{theorem}[Furstenberg Correspondence Principle,~\cite{Fu},~\cite{B0}]\label{T:FCP}
Let $E$ be a subset of integers. There exists a system $(X,\mathcal{B},\mu,T)$ and a set $A\in \mathcal{B}$ with $\mu(A)=\bar{d}(E)$  such that \begin{equation}\label{E:FCP}
\bar{d}(E\cap(E-n_1)\cap\ldots\cap (E-n_\ell))\geq \mu(A\cap T^{-n_1}A\cap\ldots\cap T^{-n_\ell}A)
\end{equation}
for every $\ell\in\mathbb{N}$ and $n_1,\ldots,n_\ell\in \mathbb{Z}.$
\end{theorem}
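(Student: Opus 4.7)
The plan is to realize $E$ symbolically via the Krylov--Bogolyubov construction and then read off the correlations from a weak-$*$ limit of empirical measures. I take $X := \{0,1\}^{\mathbb{Z}}$ with the product topology (compact, metrizable) and its Borel $\sigma$-algebra $\mathcal{B}$; let $T\colon X \to X$ be the left shift $(Tx)_n = x_{n+1}$, which is a homeomorphism, hence invertible and bimeasurable; and let $A := \{x \in X \colon x_0 = 1\}$. Then $A$ is clopen and $T^{-n}A = \{x \in X \colon x_n = 1\}$, so for any integers $n_1,\dots,n_\ell$ the set $A \cap T^{-n_1}A \cap \cdots \cap T^{-n_\ell}A$ is clopen, with continuous indicator function.

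Next, I set $\omega := 1_E \in X$ and consider the empirical averages
\[
\nu_N := \frac{1}{N}\sum_{n=1}^N \delta_{T^n \omega}.
\]
First I choose $N_k \to \infty$ with $|E \cap [1,N_k]|/N_k \to \bar{d}(E)$. By weak-$*$ compactness of the space of Borel probability measures on the compact metric space $X$, I pass to a further subsequence (still denoted $N_k$) along which $\nu_{N_k}$ converges weak-$*$ to a probability measure $\mu$. A standard estimate of the form $|\nu_N(f\circ T) - \nu_N(f)| \leq 2\norm{f}_\infty/N$ for $f \in C(X)$ shows that $\mu$ is $T$-invariant, so $(X,\mathcal{B},\mu,T)$ is a measure preserving system (after $\mu$-completion, a standard Borel probability space).

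To finish, I test $\mu$ against the continuous indicator of the clopen set $B := A \cap T^{-n_1}A \cap \cdots \cap T^{-n_\ell}A$. Since $T^n\omega \in B$ iff $n \in E \cap (E-n_1) \cap \cdots \cap (E-n_\ell)$, weak-$*$ convergence yields
\[
\mu(B) \;=\; \lim_{k\to\infty}\frac{|E \cap (E-n_1)\cap \cdots \cap (E-n_\ell) \cap [1,N_k]|}{N_k}.
\]
Being a limit along a subsequence, the right-hand side is bounded above by the full $\limsup$, which is precisely $\bar{d}(E \cap (E-n_1)\cap\cdots\cap(E-n_\ell))$; this is \eqref{E:FCP}. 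Specializing to $\ell=0$ gives $\mu(A) = \bar{d}(E)$ by the initial choice of $N_k$.

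The only delicate point, which is also the main obstacle, is the asymmetry between the equality $\mu(A) = \bar{d}(E)$ and the mere inequality for the higher-order correlations. This is resolved by first selecting $N_k$ to \emph{attain} the upper density of $E$ itself, and only afterwards extracting a weak-$*$ convergent subsequence: the later extraction cannot disturb the limit $\bar{d}(E)$ (being a subsequence of a convergent sequence), yet the limit of the higher-order correlations along this specific subsequence may well fall strictly below their upper density, which is exactly what the inequality in \eqref{E:FCP} reflects.
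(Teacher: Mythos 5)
Your proof is correct, and it is the standard Krylov--Bogolyubov proof of the Correspondence Principle (shift space $\{0,1\}^{\mathbb{Z}}$, weak-$*$ limit of empirical measures along a density-realizing sequence, testing against clopen cylinders). The paper does not reprove Theorem~\ref{T:FCP} but cites it from \cite{Fu} and \cite{B0}, and your argument is precisely the one given in those references; you also correctly isolate the key point that one must first fix $N_k$ to realize $\bar{d}(E)$ and only then extract a weak-$*$ convergent subsequence, which is what produces the equality $\mu(A)=\bar{d}(E)$ while the higher correlations only inherit an inequality.
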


Using Corollary~\ref{T:mcr} and Theorem~\ref{T:FCP}, we have the following combinatorial result:

\begin{corollary}\label{T:s1}
Let $(p_N)_N\subseteq\mathbb{R}[t]$ be as in Theorem~\ref{T:mc}. Then, for every $\ell\in \mathbb{N}$ and every set $E\subseteq \mathbb{N}$ with $\bar{d}(E)>0,$ we have
\[\liminf_{N\to\infty}\frac{1}{N}\sum_{n=1}^N \bar{d}\left(E\cap(E-[p_N(n)])\cap(E-2[p_N(n)])\cap\ldots\cap (E-\ell[p_N(n)])\right)>0.\]
\end{corollary}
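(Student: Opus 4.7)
The plan is to deduce Corollary~\ref{T:s1} as a direct combinatorial translation of Corollary~\ref{T:mcr}, using Theorem~\ref{T:FCP} as the bridge. No new dynamical content is needed: this is the standard Furstenberg-correspondence route from measure-theoretic multiple recurrence to density statements on the integers.

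First, given $E\subseteq \mathbb{N}$ with $\bar{d}(E)>0$, I would apply Theorem~\ref{T:FCP} to produce a system $(X,\mathcal{B},\mu,T)$ and a set $A\in\mathcal{B}$ with $\mu(A)=\bar{d}(E)>0$ such that \eqref{E:FCP} holds for every choice of integer shifts. Then, for each fixed $N\in\mathbb{N}$ and $n\in\{1,\ldots,N\}$, instantiate \eqref{E:FCP} at the $\ell$-tuple of integers $n_i := i[p_N(n)]$, $1\le i\le \ell$, to obtain
\[
\bar{d}\bigl(E\cap (E-[p_N(n)])\cap\ldots\cap (E-\ell[p_N(n)])\bigr)\ \geq\ \mu\bigl(A\cap T^{-[p_N(n)]}A\cap\ldots\cap T^{-\ell[p_N(n)]}A\bigr).
\]

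Next, I would average this pointwise inequality in $n$ from $1$ to $N$ and take $\liminf_{N\to\infty}$ on both sides. The left-hand side becomes exactly the quantity in the statement of Corollary~\ref{T:s1}, while the right-hand side becomes the average appearing in Corollary~\ref{T:mcr}, applied to the same $\ell$, the system $(X,\mathcal{B},\mu,T)$ produced above, and the set $A$ (which has positive measure since $\mu(A)=\bar{d}(E)>0$). Corollary~\ref{T:mcr} gives that this right-hand limit exists and is strictly positive, so the liminf of the left-hand averages is bounded below by a positive number, yielding the claim.

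There is no genuine obstacle here; the only point requiring minor care is keeping the two meanings of upper density separate, namely the $\bar{d}(\cdot)$ inside the sum (taken over an auxiliary variable on which the intersection sets depend) versus the limiting parameter $N$ of the outer Cesàro average. Since the inequality from \eqref{E:FCP} is valid for each fixed choice of shifts, the averaging and $\liminf$ in the outer variable $N$ pass through without issue, and the result follows immediately from the two quoted results.
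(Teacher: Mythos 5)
Your proposal is correct and follows exactly the route the paper intends: Furstenberg's Correspondence Principle (Theorem~\ref{T:FCP}) furnishes a system and a set $A$ of measure $\bar{d}(E)>0$ for which the pointwise inequality holds at every tuple of shifts, and averaging over $n\le N$ and taking $\liminf_{N\to\infty}$ reduces the claim to Corollary~\ref{T:mcr}. The paper states Corollary~\ref{T:s1} as an immediate consequence of those two results without writing out the argument, and your write-up is precisely that omitted verification.
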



Hence, we immediately get the following refinement of Szemer{\'e}di's theorem:

\begin{corollary}\label{C:Sz}
Let $(p_N)_N\subseteq\mathbb{R}[t]$ be as in Theorem~\ref{T:mc}. Then, for every $\ell\in \mathbb{N}$, every set $E\subseteq \mathbb{N}$ with $\bar{d}(E)>0$ contains arithmetic progressions of the form: \[\{m, m+[p_N(n)],m+2[p_N(n)],\ldots,m+\ell[p_N(n)]\},\]
for some $m\in \mathbb{Z},$ $N\in \mathbb{N},$ and $1\leq n\leq N,$ with $[p_N(n)]\neq 0.$
\end{corollary}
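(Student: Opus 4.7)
The plan is to deduce this as an immediate consequence of Corollary~\ref{T:s1}. Applying that corollary to the given set $E$ provides a constant $c>0$ with
\[
\frac{1}{N}\sum_{n=1}^N \bar{d}\bigl(E\cap(E-[p_N(n)])\cap\cdots\cap (E-\ell[p_N(n)])\bigr) \geq c
\]
for all sufficiently large $N$. Hence for each such $N$ there exists some $n\in\{1,\ldots,N\}$ for which the intersection $E\cap(E-[p_N(n)])\cap\cdots\cap (E-\ell[p_N(n)])$ is non-empty; any element $m$ of this intersection lies in $E$ together with $m+[p_N(n)], m+2[p_N(n)], \ldots, m+\ell[p_N(n)]$, yielding an arithmetic progression of the desired form.

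The only subtlety is to ensure that the common difference $[p_N(n)]$ can be chosen non-zero. Set $B_N := \{n\in\{1,\ldots,N\} : [p_N(n)]=0\}$; for $n\in B_N$ the intersection collapses to $E$ itself, so the contribution of these indices to the average above is bounded by $|B_N|/N$. Provided $|B_N|/N\to 0$ as $N\to\infty$, the positive lower bound $c$ must be attained by indices $n\notin B_N$, and the corresponding $m$ together with $[p_N(n)]\neq 0$ produce the required AP.

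The main obstacle, then, is verifying the decay $|B_N|/N\to 0$. This is transparent in the motivating examples (e.g., in Example~\ref{Ex:1} one has $p_N(n)=n/N^a\in[0,1)$ only for $n<N^a$, giving $|B_N|/N\leq N^{a-1}\to 0$), and in general it is extracted from the goodness hypothesis on $(p_N)_N$ combined with the fact that $p_N$ is eventually non-constant in $n$ (goodness applied with $\alpha=2\pi k$, $k\in\mathbb{Z}\setminus\{0\}$, forces enough equidistribution of $p_N(n)$ to prevent a positive-density concentration of its values in $[0,1)$). Once this technical point is in hand, the statement follows as a one-line extraction from Corollary~\ref{T:s1}.
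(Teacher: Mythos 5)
Your deduction from Corollary~\ref{T:s1} is structurally the intended one, and you correctly identify that the only issue is ruling out indices $n$ with $[p_N(n)]=0$. Setting $B_N:=\{n\le N:[p_N(n)]=0\}$ and bounding the contribution of $B_N$ to the average by $\bar d(E)\cdot|B_N|/N$ is exactly the right move, and $|B_N|/N\to 0$ does close the argument.

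However, the justification you sketch for $|B_N|/N\to 0$ does not work. Applying goodness only at $\alpha=2\pi k$ with $k\in\mathbb{Z}\setminus\{0\}$ yields (Ces\`aro-averaged) equidistribution of $p_N(n)$ \emph{modulo} $1$, and this places no constraint whatsoever on how much mass lies in $[0,1)$: a sequence whose values are uniformly distributed in $[0,1)$ satisfies $\tfrac1N\sum_n e^{2\pi ikp_N(n)}\to 0$ for all $k\neq 0$ while having $|B_N|/N=1$. The constraint you need comes from goodness at \emph{non-integer} multiples of $2\pi$, i.e.\ from the fact that \eqref{E:good} is required for \emph{every} nonzero real $\alpha$. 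Concretely, if $\mu_N:=\tfrac1N\sum_{n=1}^N\delta_{p_N(n)}$, then goodness says $\widehat{\mu_N}(\alpha)=\tfrac1N\sum_{n\le N}e^{i\alpha p_N(n)}\to 0$ for every $\alpha\neq 0$, with $|\widehat{\mu_N}|\le 1$; picking any smooth, compactly supported $\phi\ge 1_{[0,1)}$ and using Fourier inversion plus dominated convergence gives
\[
\frac{|B_N|}{N}=\mu_N\bigl([0,1)\bigr)\le \int\phi\,d\mu_N=\frac{1}{2\pi}\int\widehat{\phi}(\alpha)\,\widehat{\mu_N}(\alpha)\,d\alpha\longrightarrow 0.
\]
(Equivalently, in the degree-$1$ case one can see directly that $a_NN\to\infty$ is forced: if $(a_NN)_N$ stayed bounded along a subsequence, the Riemann-sum computation of $\tfrac1N\sum e^{i\alpha a_N n}$ would produce a nonzero limit for generic $\alpha$, contradicting goodness.) With this correction the rest of your argument is fine and matches the paper's ``immediate'' deduction from Corollary~\ref{T:s1}.
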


\subsection{Multiple sequences consequences}\label{Ss:2.2}
As in Subsection~\ref{Ss:2.1}, assuming the validity of Theorem~\ref{T:main}, we have various implications for multiple variable polynomial sequences.

\subsubsection{Recurrence} Our first recurrence result is the following (we skip the proof as the argument is the same as the one in \cite[Theorem~2.8]{F1}):

\begin{theorem}\label{T:r1}
For $\ell\in \mathbb{N},$ let $(p_{1,N}, \ldots, p_{\ell,N})_N$ be as in Theorem~\ref{T:main}. If $(X,\mathcal{B},\mu,T)$ is a system and $A_0, A_1,\ldots, A_\ell\in \mathcal{B}$ such that
\[\mu\left(A_0\cap T^{k_1}A_1\cap\ldots\cap T^{k_\ell}A_\ell\right)=\alpha>0\]
for some $k_1,\ldots,k_\ell\in \mathbb{Z},$ then
\begin{equation*}
\lim_{N\to\infty}\frac{1}{N}\sum_{n=1}^N \mu\left(A_0\cap T^{-[p_{1,N}(n)]}A_1\cap\ldots\cap T^{-[p_{\ell,N}(n)]}A_\ell\right)\geq \alpha^{\ell+1}.
\end{equation*}
\end{theorem}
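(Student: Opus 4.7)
\textbf{Proof proposal for Theorem~\ref{T:r1}.} The plan is to convert the recurrence statement into an $L^2$-convergence statement for indicator functions, invoke Theorem~\ref{T:main} in its non-ergodic formulation, and then extract the lower bound $\alpha^{\ell+1}$ from the hypothesis via H\"older and Jensen inequalities.

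First, setting $f_i:=1_{A_i}$ for $0\le i\le \ell$, I would rewrite each summand of the average as
\[\mu\bigl(A_0\cap T^{-[p_{1,N}(n)]}A_1\cap\cdots\cap T^{-[p_{\ell,N}(n)]}A_\ell\bigr)=\int f_0\cdot T^{[p_{1,N}(n)]}f_1\cdots T^{[p_{\ell,N}(n)]}f_\ell\,d\mu.\]
By Theorem~\ref{T:main}, extended to non-ergodic systems by the footnote following its statement, the inner averages converge in $L^2(\mu)$ to $\prod_{i=1}^\ell \mathbb{E}(f_i\,|\,\mathcal{I}(T))$. Since each $\mathbb{E}(f_i\,|\,\mathcal{I}(T))$ is $\mathcal{I}(T)$-measurable, the tower property lets me rewrite the limit of the averages as
\[\int \prod_{i=0}^\ell \mathbb{E}(f_i\,|\,\mathcal{I}(T))\,d\mu=\int \prod_{i=0}^\ell \mu_t(A_i)\,d\lambda(t),\]
where $\mu=\int \mu_t\,d\lambda(t)$ denotes the ergodic decomposition of $\mu$.

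It remains to verify that this last quantity is at least $\alpha^{\ell+1}$. Since each $\mu_t$ is $T$-invariant, the generalized H\"older inequality applied on $(X,\mu_t)$ yields
\[\varphi(t):=\mu_t\bigl(A_0\cap T^{k_1}A_1\cap\cdots\cap T^{k_\ell}A_\ell\bigr)\le \prod_{i=0}^\ell \mu_t(A_i)^{1/(\ell+1)},\]
so $\varphi(t)^{\ell+1}\le \prod_{i=0}^\ell \mu_t(A_i)$ pointwise. Noting that $\alpha=\int \varphi\,d\lambda$ and applying Jensen's inequality to the convex map $x\mapsto x^{\ell+1}$ on the probability space over which $\lambda$ lives, I would then conclude
\[\alpha^{\ell+1}=\Bigl(\int \varphi\,d\lambda\Bigr)^{\ell+1}\le \int \varphi^{\ell+1}\,d\lambda\le \int \prod_{i=0}^\ell \mu_t(A_i)\,d\lambda(t),\]
which closes the argument.

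The only substantive ingredient I need to import is the non-ergodic extension of Theorem~\ref{T:main}, which is the content of the footnote after that theorem and follows routinely from the ergodic decomposition together with the Dominated Convergence Theorem. The remaining steps---the tower property, H\"older, and Jensen---are elementary, which is why the author is content to omit the proof and refer to the parallel argument in \cite[Theorem~2.8]{F1}.
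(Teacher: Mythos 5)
Your proof is correct and follows the same route the paper intends: the paper cites \cite[Theorem~2.8]{F1}, whose argument is exactly the combination of (a) the non-ergodic version of Theorem~\ref{T:main} to identify the limit as $\int\prod_{i=0}^{\ell}\mathbb{E}(1_{A_i}\mid\mathcal{I}(T))\,d\mu$, and (b) H\"older on each ergodic component (using $T$-invariance of $\mu_t$) followed by Jensen for $x\mapsto x^{\ell+1}$ over the probability measure $\lambda$ of the ergodic decomposition. Every step you wrote checks out, including the use of the tower property to absorb $f_0$ and the invariance $\mu_t(T^{k_i}A_i)=\mu_t(A_i)$, so there is nothing to add.
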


Setting $A_i=A$ and $k_i=0$ we immediately get the following:

\begin{corollary}\label{C:r1}
For $\ell\in \mathbb{N},$ let $(p_{1,N}, \ldots, p_{\ell,N})_N$ be as in Theorem~\ref{T:main}. Then, for every system $(X,\mathcal{B},\mu,T)$ and every set $A\in \mathcal{B},$ we have \begin{equation*}\label{E:r1_primes_2}
\lim_{N\to\infty}\frac{1}{N}\sum_{n=1}^N \mu\left(A\cap T^{-[p_{1,N}(n)]}A\cap\ldots\cap T^{-[p_{\ell,N}(n)]}A\right)\geq (\mu(A))^{\ell+1}.
\end{equation*}
\end{corollary}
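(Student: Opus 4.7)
The plan is to deduce Corollary~\ref{C:r1} as a direct specialization of Theorem~\ref{T:r1}, which is already stated in the excerpt and whose proof is deferred. Concretely, I would apply Theorem~\ref{T:r1} to the same $\ell$-tuple $(p_{1,N},\ldots,p_{\ell,N})_N$ and the same system $(X,\mathcal{B},\mu,T)$, with the particular choice $A_0=A_1=\cdots=A_\ell=A$ and $k_1=\cdots=k_\ell=0$.

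First, I would verify the hypothesis of Theorem~\ref{T:r1}. With these choices, $T^{k_i}A_i=T^0A=A$ for every $i$, so
\[
A_0\cap T^{k_1}A_1\cap\cdots\cap T^{k_\ell}A_\ell \;=\; A,
\]
and hence $\mu(A_0\cap T^{k_1}A_1\cap\cdots\cap T^{k_\ell}A_\ell)=\mu(A)$. Provided $\mu(A)>0$, the hypothesis is satisfied with $\alpha=\mu(A)$, and Theorem~\ref{T:r1} yields
\[
\lim_{N\to\infty}\frac{1}{N}\sum_{n=1}^N \mu\bigl(A\cap T^{-[p_{1,N}(n)]}A\cap\cdots\cap T^{-[p_{\ell,N}(n)]}A\bigr)\;\geq\;\alpha^{\ell+1}\;=\;(\mu(A))^{\ell+1},
\]
which is exactly the desired inequality.

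The only remaining point is the degenerate case $\mu(A)=0$, where Theorem~\ref{T:r1} does not directly apply since it requires $\alpha>0$. Here the conclusion is trivial: every term in the Cesàro average is bounded below by $0$, and the right-hand side $(\mu(A))^{\ell+1}=0$, so the inequality holds vacuously. There is no substantive obstacle; the only thing to be careful about is remembering to separate out this zero-measure case, after which the statement follows from Theorem~\ref{T:r1} purely by plugging in the specified sets and translates.
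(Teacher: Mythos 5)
Your proposal matches the paper's proof exactly: the paper derives Corollary~\ref{C:r1} from Theorem~\ref{T:r1} precisely by setting $A_i=A$ and $k_i=0$, which gives $\alpha=\mu(A)$ and the stated bound. Your extra remark handling the trivial case $\mu(A)=0$ is a sensible bit of bookkeeping that the paper omits but does not change the substance of the argument.
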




\subsubsection{Combinatorics}

Theorem~\ref{T:r1}, via \cite[Proposition~3.3]{F5}, which is a variant of Theorem~\ref{T:FCP} for several sets, implies the following (we are skipping the routine details):

\begin{theorem}\label{T:r2}
For $\ell\in \mathbb{N},$ let $(p_{1,N}, \ldots, p_{\ell,N})_N$ be as in Theorem~\ref{T:main}. If $E_0, E_1,\ldots,$ $E_\ell\subseteq \mathbb{N}$ are such that
\[\bar{d}(E_0\cap(E_1+k_1)\cap\ldots\cap(E_\ell+k_\ell))=\alpha>0\]
for some $k_1,\ldots,k_\ell\in \mathbb{Z},$ then
\[\liminf_{N\to\infty}\frac{1}{N}\sum_{n=1}^N \bar{d}(E_0\cap (E_1-[p_{1,N}(n)])\cap\ldots\cap (E_\ell-[p_{\ell,N}(n)]))\geq \alpha^{\ell+1}.\]
\end{theorem}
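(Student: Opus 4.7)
My plan is to deduce Theorem~\ref{T:r2} from the ergodic statement of Theorem~\ref{T:r1} by invoking the multi-set variant of Furstenberg's Correspondence Principle, as formulated in \cite[Proposition~3.3]{F5}. First I would apply that proposition to the family $E_0,E_1,\ldots,E_\ell$, using a F{\o}lner sequence along which the prescribed upper density
\[\bar{d}\left(E_0\cap (E_1+k_1)\cap\ldots\cap (E_\ell+k_\ell)\right)=\alpha\]
is realised as a genuine limit. This produces a measure preserving system $(X,\mathcal{B},\mu,T)$ together with sets $A_0,A_1,\ldots,A_\ell\in\mathcal{B}$ satisfying $\mu(A_i)=\bar{d}(E_i)$ and, on the one hand,
\[\mu\left(A_0\cap T^{k_1}A_1\cap\ldots\cap T^{k_\ell}A_\ell\right)\geq\alpha,\]
and, on the other hand, for every $n_1,\ldots,n_\ell\in\mathbb{Z}$,
\[\bar{d}\left(E_0\cap (E_1-n_1)\cap\ldots\cap (E_\ell-n_\ell)\right)\geq\mu\left(A_0\cap T^{-n_1}A_1\cap\ldots\cap T^{-n_\ell}A_\ell\right).\]

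Next, I would feed the system and sets produced in the first step into Theorem~\ref{T:r1}, using the same integers $k_1,\ldots,k_\ell$; if the measure of the translated intersection strictly exceeds $\alpha$ one only gains in the conclusion, so we may as well assume it equals $\alpha$. The output of Theorem~\ref{T:r1} is then
\[\lim_{N\to\infty}\frac{1}{N}\sum_{n=1}^N \mu\left(A_0\cap T^{-[p_{1,N}(n)]}A_1\cap\ldots\cap T^{-[p_{\ell,N}(n)]}A_\ell\right)\geq\alpha^{\ell+1}.\]
Specialising the correspondence inequality of the first step to $n_i=[p_{i,N}(n)]$, averaging term by term over $1\leq n\leq N$, and passing to the liminf as $N\to\infty$ yields precisely the stated lower bound for the $\liminf$ in the conclusion of Theorem~\ref{T:r2}.

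The only point requiring real care, and presumably the reason the author calls the argument routine rather than trivial, is the simultaneous production in the first step of a positive measure at the specific translates $k_1,\ldots,k_\ell$ and of the correspondence inequality at arbitrary translates, all using a single system and a single assignment $E_i\mapsto A_i$. This is exactly what \cite[Proposition~3.3]{F5} is designed to deliver, so no further dynamical input beyond Theorem~\ref{T:r1} is required.
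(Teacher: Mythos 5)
Your proposal is correct and matches the route the paper has in mind: the paper explicitly says that Theorem~\ref{T:r2} follows from Theorem~\ref{T:r1} via \cite[Proposition~3.3]{F5} with the details left as routine, and you have spelled out exactly those details (choose a F{\o}lner sequence realising the density of $E_0\cap(E_1+k_1)\cap\ldots\cap(E_\ell+k_\ell)$ so that the corresponding measure satisfies $\mu(A_0\cap T^{k_1}A_1\cap\ldots\cap T^{k_\ell}A_\ell)=\alpha$, apply Theorem~\ref{T:r1}, then transfer back through the correspondence inequality and take $\liminf$). One cosmetic remark: the claim $\mu(A_i)=\bar d(E_i)$ is not needed anywhere in your argument and may not hold for a F{\o}lner sequence tuned to realise the specific intersection density, but since it plays no role the argument is unaffected.
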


Setting $E_i=E$ and $k_i=0$ in the previous result, we get:

\begin{corollary}\label{C:r2}
For $\ell\in \mathbb{N},$ let $(p_{1,N}, \ldots, p_{\ell,N})_N$ be as in Theorem~\ref{T:main}. Then, for every set $E\subseteq \mathbb{N},$ we have \[\liminf_{N\to\infty}\frac{1}{N}\sum_{n=1}^N \bar{d}(E\cap (E-[p_{1,N}(n)])\cap\ldots\cap (E-[p_{\ell,N}(n)]))\geq (\bar{d}(E))^{\ell+1}.\]
\end{corollary}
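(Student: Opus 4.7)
The plan is to deduce this directly from Theorem~\ref{T:r2} by specializing the sets and shifts. First I would dispose of the trivial case: if $\bar{d}(E)=0$, then the right-hand side $(\bar{d}(E))^{\ell+1}$ equals $0$, and the liminf of a nonnegative sequence is automatically $\geq 0$, so there is nothing to prove.

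Assume now $\bar{d}(E)>0$. I would apply Theorem~\ref{T:r2} with the particular choice $E_0=E_1=\cdots=E_\ell:=E$ and $k_1=\cdots=k_\ell:=0$. Under this specialization, the hypothesis reduces to
\[
\bar{d}\bigl(E_0\cap (E_1+k_1)\cap\ldots\cap(E_\ell+k_\ell)\bigr)=\bar{d}(E)=:\alpha>0,
\]
which is exactly our standing assumption; so Theorem~\ref{T:r2} applies with this value of $\alpha$. Its conclusion then reads
\[
\liminf_{N\to\infty}\frac{1}{N}\sum_{n=1}^N \bar{d}\bigl(E\cap(E-[p_{1,N}(n)])\cap\ldots\cap(E-[p_{\ell,N}(n)])\bigr)\geq \alpha^{\ell+1}=(\bar{d}(E))^{\ell+1},
\]
which is precisely the desired inequality.

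There is essentially no obstacle here: the statement is a direct specialization of Theorem~\ref{T:r2}, all the substantive content having been absorbed into that theorem (and, through it, into Theorem~\ref{T:r1} and the ``several-sets'' variant of Furstenberg's Correspondence Principle \cite[Proposition~3.3]{F5}). The only mild subtlety worth flagging is the need to split off the $\bar{d}(E)=0$ case, since Theorem~\ref{T:r2} formally requires the initial intersection to have positive density.
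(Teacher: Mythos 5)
Your proof is correct and matches the paper's argument exactly: the paper obtains Corollary~\ref{C:r2} by setting $E_i=E$ and $k_i=0$ in Theorem~\ref{T:r2}. Your explicit handling of the degenerate case $\bar{d}(E)=0$ is a harmless (and careful) addition.
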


So, we immediately obtain the following combinatorial result:

\begin{corollary}\label{C:r22}
For $\ell\in \mathbb{N},$ let $(p_{1,N}, \ldots, p_{\ell,N})_N$ be as in Theorem~\ref{T:main}. Then every set $E\subseteq \mathbb{N}$ with $\bar{d}(E)>0$ contains arithmetic configurations of the form
\[\{m, m+[p_{1,N}(n)],m+[p_{2,N}(n)],\ldots,m+[p_{\ell,N}(n)]\},\]
for some $m\in \mathbb{Z},$ $N\in\mathbb{N},$ and $1\leq n\leq N,$ with $[p_{i,N}(n)]\neq 0,$ for all $1\leq i\leq \ell.$
\end{corollary}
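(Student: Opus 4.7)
The plan is to deduce this combinatorial statement from the quantitative recurrence bound in Corollary~\ref{C:r2}, with some care required to ensure the arithmetic configuration produced is non-degenerate, that is, $[p_{i,N}(n)]\neq 0$ for all $i$.

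First I would invoke Corollary~\ref{C:r2} on the given set $E$, obtaining
\[\liminf_{N\to\infty}\frac{1}{N}\sum_{n=1}^N \bar{d}\!\left(E\cap (E-[p_{1,N}(n)])\cap\cdots\cap (E-[p_{\ell,N}(n)])\right)\geq (\bar{d}(E))^{\ell+1}=:c>0.\]
Thus the inner average is at least $c/2$ for all sufficiently large $N$ along some subsequence. If the ``degenerate'' indices (those with some $[p_{i,N}(n)]=0$) contribute negligibly to this average, then some non-degenerate $n$ must produce a non-empty intersection, and any element $m$ of that intersection yields the desired configuration inside $E$.

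The key auxiliary claim is that $B_N:=\{n\in\{1,\ldots,N\}: [p_{i,N}(n)]=0 \text{ for some } 1\leq i\leq \ell\}$ satisfies $|B_N|/N\to 0$. For each $i$, the sequence $(p_{i,N})_N$ is itself good (apply Definition~\ref{D:Good} with the linear combination $\lambda_j=\delta_{ij}$), and by Weyl's criterion this is equivalent to $(p_{i,N}(n)\bmod T)_{n=1}^N$ being equidistributed on $[0,T)$ as $N\to\infty$, for every real $T>0$. Since $\{n\leq N:[p_{i,N}(n)]=0\}=\{n\leq N:p_{i,N}(n)\in[0,1)\}$ is contained in $\{n\leq N:p_{i,N}(n)\bmod T\in[0,1)\}$ whenever $T>1$, this yields an asymptotic density bound of $1/T$; letting $T\to\infty$ and taking a union bound over $i$ proves the claim.

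With $|B_N|/N\to 0$ in hand, the contribution of $n\in B_N$ to the average in Corollary~\ref{C:r2} is at most $|B_N|/N=o(1)$. Hence for large $N$ (in the subsequence where the average is $\geq c/2$) the contribution from $n\notin B_N$ is still at least $c/4>0$, which forces the existence of a non-degenerate $n$ producing a non-empty intersection and thereby the required configuration. The main, and essentially only, obstacle is the equidistribution step showing $|B_N|/N\to 0$; this is where the goodness hypothesis of Definition~\ref{D:Good} is genuinely used, guaranteeing that the iterates $p_{i,N}(n)$ are not trapped in $[0,1)$ on a positive-density set of $n$. All other steps are routine bookkeeping.
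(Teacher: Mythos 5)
Your proof is correct and follows the same route the paper intends: deduce Corollary~\ref{C:r22} from the quantitative lower bound of Corollary~\ref{C:r2}. The only place where any work is actually required is the non-degeneracy condition $[p_{i,N}(n)]\neq 0$, and your argument there is the natural one and is sound. In particular, the chain (i) each $(p_{i,N})_N$ is individually good (take $\lambda_j=\delta_{ij}$ in Definition~\ref{D:Good}); (ii) Weyl's criterion for the triangular array $(p_{i,N}(n)/T)_{1\le n\le N}$, applied with $\alpha=2\pi k/T$, $k\in\mathbb{Z}\setminus\{0\}$, gives equidistribution of $p_{i,N}(n)\bmod T$ on $[0,T)$; (iii) $\{n\le N:[p_{i,N}(n)]=0\}\subseteq\{n\le N:p_{i,N}(n)\bmod T\in[0,1)\}$; so $\limsup_N |B_N|/N\le \ell/T$ for every $T>1$, hence $|B_N|/N\to 0$. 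Combined with the lower bound $(\bar d(E))^{\ell+1}>0$ and the trivial bound of $1$ on each summand, this forces a non-degenerate $n$ with a non-empty intersection, which is exactly the configuration required.

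The paper labels this passage as ``immediate,'' so it does not spell out the non-degeneracy step. Your write-up supplies precisely that missing (and genuinely necessary, since $[p_{i,N}(n)]=0$ can hold on a set of positive, albeit vanishing, proportion of $n\in\{1,\dots,N\}$ for each fixed $N$) detail, using only the goodness hypothesis; this is the right and essentially unique way to complete the argument, not a different approach.
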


A set $E\subseteq \mathbb{N}$ is called \emph{syndetic} if finitely many translations of it cover $\mathbb{N}.$ The cardinality of such a set of translations is a \emph{syndeticity constant} of $E$.
Applying Theorem~\ref{T:r2} to syndetic sets $E_0, E_1,\ldots, E_\ell$ and $\alpha=(\prod_{i=1}^\ell r_i)^{-1},$ where $r_i$ is a syndeticity constant of $E_i,$ $1\leq i\leq \ell,$ we have:

\begin{corollary}\label{C:r23}
For $\ell\in \mathbb{N},$ let $(p_{1,N}, \ldots, p_{\ell,N})_N$ be as in Theorem~\ref{T:main}. If $E_0, E_1,\ldots,$ $E_\ell\subseteq \mathbb{N}$ are syndetic sets, then there exist $m\in \mathbb{Z},$ $N\in\mathbb{N},$ and $1\leq n\leq N,$ with $[p_{i,N}(n)]\neq 0,$ for all $1\leq i\leq \ell,$ such that
\[m\in E_0, m+[p_{1,N}(n)]\in E_1,\ldots, m+[p_{\ell,N}(n)]\in E_\ell.\]
\end{corollary}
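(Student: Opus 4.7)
The plan is to reduce to Theorem~\ref{T:r2} by using the syndeticity of the $E_i$'s to manufacture integer shifts $k_1, \ldots, k_\ell$ fulfilling the positive-upper-density hypothesis of that theorem, and then to extract the arithmetic configuration from the resulting positive liminf.

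First I would produce the shifts. Since each $E_i$ is syndetic with constant $r_i$, there is a finite set $C_i \subseteq \mathbb{Z}$ of cardinality $r_i$ with $\bigcup_{c \in C_i}(E_i + c) = \mathbb{N}$. For any $F \subseteq \mathbb{N}$, writing $F = \bigcup_{c \in C_i}(F \cap (E_i + c))$ and using subadditivity of $\bar d$ yields, by pigeonhole, some $c^* \in C_i$ with $\bar d(F \cap (E_i + c^*)) \geq \bar d(F)/r_i$. Starting from $F = E_0$, which has $\bar d(E_0) > 0$ by its syndeticity, and iterating this step for $i = 1, \ldots, \ell$, I obtain shifts $k_1, \ldots, k_\ell$ satisfying
\[\alpha := \bar d\bigl(E_0 \cap (E_1 + k_1) \cap \cdots \cap (E_\ell + k_\ell)\bigr) \geq \bar d(E_0)/(r_1 \cdots r_\ell) > 0.\]

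Applying Theorem~\ref{T:r2} with these data then gives
\[\liminf_{N\to\infty}\frac{1}{N}\sum_{n=1}^N \bar d\bigl(E_0 \cap (E_1 - [p_{1,N}(n)]) \cap \cdots \cap (E_\ell - [p_{\ell,N}(n)])\bigr) \geq \alpha^{\ell+1} > 0.\]
Hence there exist $N$ and $1 \leq n \leq N$ for which the summand is strictly positive; the corresponding intersection is then nonempty, producing some $m \in E_0$ with $m + [p_{i,N}(n)] \in E_i$ for every $1 \leq i \leq \ell$.

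To enforce $[p_{i,N}(n)] \neq 0$ for all $i$, I would show that the exceptional set $B_N := \{1 \leq n \leq N : [p_{i,N}(n)] = 0 \text{ for some } 1 \leq i \leq \ell\}$ has $|B_N|/N \to 0$ as $N \to \infty$. Since each summand in the Cesàro average is bounded by $1$, the contribution of $n \in B_N$ is then $o(1)$, so the positive liminf must be witnessed by indices $n \notin B_N$, yielding the required non-degenerate configuration.

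The main obstacle is the final density estimate. For polynomial sequences of the form~\eqref{E:polyv}, the leading coefficient $a_{i,d_i,N}$ is a linear combination of reciprocals of sublinear $\mathcal{LE}$-functions evaluated at $N$, and hence tends to $0$ as $N \to \infty$; consequently $|p_{i,N}(n)| < 1$ forces $n$ to lie in an initial segment of $\{1,\ldots, N\}$ whose relative length tends to $0$, exactly as in Example~\ref{Ex:1} where $[n/N^a] = 0$ already requires $n < N^a$, contributing density $N^{a-1} \to 0$. The goodness hypothesis alone, which only provides equidistribution modulo $1$, would be insufficient here; it is the structural form~\eqref{E:polyv} of the coefficients that supplies the needed bound.
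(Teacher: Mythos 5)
Your main reduction is exactly what the paper does: the paper justifies the corollary in one line by "Applying Theorem~\ref{T:r2} to syndetic sets $E_0,\ldots,E_\ell$ and $\alpha = (\prod_{i=1}^\ell r_i)^{-1}$," and your pigeonhole argument is the correct way to manufacture the shifts $k_i$ and certify the positivity hypothesis of Theorem~\ref{T:r2}. (Your bound $\bar d(E_0)/(r_1\cdots r_\ell)$ is in fact more careful than the paper's stated constant, which omits the $\bar d(E_0)$ factor, though both give positivity.)

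Where your write-up goes astray is the final paragraph on enforcing $[p_{i,N}(n)]\neq 0$. The causal claim that $a_{i,d_i,N}\to 0$ forces the exceptional set $B_N$ to have vanishing density is not sound as stated: if the leading coefficient decayed \emph{too} fast (faster than $N^{-d_i}$), then $p_{i,N}$ would stay bounded on $\{1,\ldots,N\}$ and the density of $n$ with $[p_{i,N}(n)]=0$ could even tend to $1$. What actually matters is that the leading coefficient decays \emph{slowly}, specifically that $\max_{1\leq j\leq d_i}|a_{i,j,N}|N^j\to\infty$; for coefficients of the form $\rho/g(N)$ with $g\in\mathcal{SLE}$ this holds because $g(x)\prec x$. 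More importantly, your final assertion that "the goodness hypothesis alone \dots would be insufficient" is wrong, and this matters because Corollary~\ref{C:r23} is stated for the general good and super nice tuples of Theorem~\ref{T:main}, not only for the $\eqref{E:polyv}$ class. In fact, goodness together with bounded degree already yields $|B_N|/N\to 0$: if along a subsequence a positive proportion of $n\in\{1,\ldots,N\}$ had $p_{i,N}(n)\in[0,1)$, a standard polynomial sublevel-set estimate would force $\max_j|a_{i,j,N}|N^j$ to stay bounded, hence $|p_{i,N}(n)|$ bounded on $\{1,\ldots,N\}$, and then for $\alpha$ small and nonzero the Weyl sum $\frac{1}{N}\sum_n e^{i p_{i,N}(n)\alpha}$ stays bounded away from zero, contradicting goodness. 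Replacing your $\eqref{E:polyv}$-specific discussion with this observation closes the gap and covers the full hypothesis of the corollary.
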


In particular, for a syndetic set $E\subseteq \mathbb{N}$, setting $E_i=c_i E:=\{c_in:\;n\in E\},$ $0\leq i\leq \ell,$ where  $c_0,c_1,\ldots,c_\ell\in \mathbb{N},$ Corollary~\ref{C:r23} above implies that we can find $x_0,x_1,\ldots,x_\ell\in E,$ $N\in \mathbb{N},$ and $1\leq n\leq N,$ that solve the system of equations $c_ix_i-c_{i-1}x_{i-1}  =  [p_{i,N}(n)],$ $1\leq i\leq \ell.$

\subsubsection{Topological dynamics} Let $(X, T)$ be a (topological) dynamical
system, i.e., $(X, d)$ is a compact metric space and $T : X \to X$ an invertible continuous transformation. $T$ (and consequently the system) is \emph{minimal}, if, for all $x\in X,$ we have $\overline{\{T^n x:\;n\in\mathbb{N}\}}=X.$

Analogously to \cite[Theorem~2.5]{KK}, we get the following result:

\begin{theorem}\label{T:topol}
For $\ell\in \mathbb{N},$ let $(p_{1,N}, \ldots, p_{\ell,N})_N$ be as in Theorem~\ref{T:main}. If $(X,T)$ is a minimal dynamical system, then, for a residual and $T$-invariant set of $x\in X,$ we have
\begin{equation}\label{E:topol}
\overline{\Big\{(T^{[p_{1,N}(n)]}x,\ldots,T^{[p_{\ell,N}(n)]}x):\;N\in\mathbb{N},\;1\leq n\leq N\Big\}}=X\times\cdots\times X.
\end{equation}
\end{theorem}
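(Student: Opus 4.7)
The plan is to combine Theorem~\ref{T:main}, applied to an ergodic invariant measure of the minimal system, with a standard Baire category argument. First, since $T$ is continuous on a compact metric space, Krylov--Bogolyubov furnishes a $T$-invariant Borel probability measure; by ergodic decomposition I may assume it is ergodic, and call it $\mu$. Minimality forces the support of any invariant measure to be all of $X$ (it is closed and $T$-invariant, hence empty or $X$), so $\mu(V)>0$ for every non-empty open $V\subseteq X$.

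Next I fix a countable basis $\{U_k\}_{k\geq 1}$ of non-empty open sets of $X$ and, for each $\vec{k}=(k_1,\ldots,k_\ell)\in\mathbb{N}^\ell$, consider
\[A_{\vec{k}}:=\bigcup_{N\geq 1}\bigcup_{n=1}^{N}\bigcap_{i=1}^{\ell}T^{-[p_{i,N}(n)]}U_{k_i},\]
which is open by continuity of $T$. To prove density of $A_{\vec{k}}$ I take any non-empty open $W\subseteq X$, apply Theorem~\ref{T:main} with $f_i=\mathbf{1}_{U_{k_i}}$, and pair the resulting $L^2(\mu)$-limit against the test function $\mathbf{1}_W\in L^2(\mu)$ to obtain
\[\lim_{N\to\infty}\frac{1}{N}\sum_{n=1}^{N}\mu\Bigl(W\cap\bigcap_{i=1}^{\ell}T^{-[p_{i,N}(n)]}U_{k_i}\Bigr)=\mu(W)\prod_{i=1}^{\ell}\mu(U_{k_i})>0.\]
Some term of this Ces\`aro average must therefore be positive, which produces a point in $W\cap A_{\vec{k}}$ and establishes density.

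Setting $R:=\bigcap_{\vec{k}\in\mathbb{N}^\ell}A_{\vec{k}}$ gives a residual set (a countable intersection of dense open sets), and by construction for every $x\in R$ the orbit on the left of \eqref{E:topol} meets every basic open box $U_{k_1}\times\cdots\times U_{k_\ell}\subseteq X^\ell$, yielding the desired equality. For $T$-invariance, writing $S_{N,n}(x):=(T^{[p_{1,N}(n)]}x,\ldots,T^{[p_{\ell,N}(n)]}x)$ one has $S_{N,n}(Tx)=(T\times\cdots\times T)\,S_{N,n}(x)$; since $T\times\cdots\times T$ is a homeomorphism of $X^\ell$, the property of having a full orbit closure is preserved by $x\mapsto T^{\pm 1}x$, so $R$ is automatically $T$-invariant. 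The only genuinely non-routine step is the invocation of Theorem~\ref{T:main}; the rest is standard Baire-category bookkeeping together with the one-line observation that $L^2(\mu)$ convergence survives pairing with the bounded test function $\mathbf{1}_W$.
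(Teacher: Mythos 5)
Your argument is correct and takes essentially the same route as the paper: apply Theorem~\ref{T:main} to indicator functions of basis sets and use Baire category. The only organizational differences are that the paper works directly with the non-ergodic form of Theorem~\ref{T:main} (limit equal to $\prod_i \mathbb{E}(f_i\vert\mathcal{I}(T))$), uses syndeticity to show the resulting $\limsup$ of the hitting averages is positive for almost every (hence a dense set of) $x$, and then verifies the $G_\delta$ property of $R$ separately; whereas you pass first to an ergodic invariant measure of full support and write $R$ directly as a countable intersection of the dense open sets $A_{\vec{k}}$, which gives residuality in one stroke. Both establish $T$-invariance by the commutation $S_{N,n}\circ T = (T\times\cdots\times T)\circ S_{N,n}$. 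Your organization is arguably the more streamlined of the two, since residuality is immediate from the decomposition of $R$.
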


\begin{proof}
There
exists a $T$-invariant Borel measure which gives positive value to every non-empty open set. So, due to the syndeticity of the orbit of every point, for every $x \in X$ and every non-empty open set $U$ we have
\begin{equation}\label{E:2.14_1}
\liminf_{N\to\infty}\frac{1}{N}\sum_{n=1}^N 1_U(T^n x)>0.
\end{equation}
As we mentioned before, Theorem~\ref{T:main} implies that
\begin{equation}\label{E:2.14_2}
    \lim_{N\to\infty}\frac{1}{N}\sum_{n=1}^N T^{[p_{1,N}(n)]}f_1\cdot\ldots\cdot T^{[p_{\ell,N}(n)]}f_\ell=\prod_{i=1}^\ell \mathbb{E}(f_i|\mathcal{I}(T)).
\end{equation}
Since $\mathbb{E}(f_i|\mathcal{I}(T))= \lim_{N\to\infty}\frac{1}{N}\sum_{n=1}^N T^n f_i,$ combining \eqref{E:2.14_2} with \eqref{E:2.14_1}, we get for almost every
$x \in X$ (hence for a dense set) and every $U_1,\ldots,U_\ell$
from a given countable basis of
non-empty open sets that
\[\limsup_{N\to\infty}\frac{1}{N}\sum_{n=1}^N 1_{U_1}(T^{[p_{1,N}(n)]}x)\cdot\ldots\cdot 1_{U_\ell}(T^{[p_{\ell,N}(n)]}x)>0,\]
proving that
the set of points that satisfy \eqref{E:topol}, say $R,$ is dense. To see that $R$ is $G_\delta,$ take $\ell=1$ (the general case is analogous). Then
\[R=\left\{x\in X: \forall\;m,r\in\mathbb{N}, \exists\;N\in\mathbb{N}\;\text{and}\;1\leq n\leq N\;\text{with}\;T^{[p_{1,N}(n)]}x\in B(x_m,1/r) \right\}, \] where $\{x_m:\;m\in\mathbb{N}\}$ is a countable, dense subset of $X$ and $B(x_m,1/r)$ denotes the open ball centered at $x_m$ with radius $1/r.$ The claim now follows since
\[ R=\bigcap_{m,r\in \mathbb{N}}\bigcup_{N\in\mathbb{N}, \atop 1\leq n\leq N}T^{-[p_{1,N}(n)]}B(x_m,1/r) .\] Since $T^{[p_{i,N}(n)]}(Tx)=T(T^{[p_{i,N}(n)]}x),$ we also get the $T$-invariance of $R$.
\end{proof}


Using Zorn's lemma, we know that every dynamical system has a minimal subsystem. This fact together with Theorem~\ref{T:topol} imply the following corollary:

\begin{corollary}\label{C:topol}
For $\ell\in \mathbb{N},$ let $(p_{1,N}, \ldots, p_{\ell,N})_N$ be as in Theorem~\ref{T:main}. If $(X,T)$ is a dynamical system, then, for a non-empty and $T$-invariant set of $x\in X,$ we have
\begin{equation*}\label{E:topol'}
\begin{split}
&\overline{\Big\{(T^{[p_{1,N}(n)]}x,\ldots,T^{[p_{\ell,N}(n)]}x):\;N\in\mathbb{N}, 1\leq n\leq N\Big\}}\\
=&\overline{\{T^n x: n\in\mathbb{N}\}}\times\cdots\times \overline{\{T^n x: n\in\mathbb{N}\}}.
\end{split}
\end{equation*}
\end{corollary}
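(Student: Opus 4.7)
The plan is to reduce to the minimal-subsystem setting covered by Theorem~\ref{T:topol}. By Zorn's lemma applied to the family of non-empty closed $T$-invariant subsets of $X$ ordered by reverse inclusion, there exists a non-empty closed $T$-invariant subset $Y\subseteq X$ for which $(Y,T|_Y)$ is a minimal system. Applying Theorem~\ref{T:topol} to $(Y,T|_Y)$ yields a residual (in $Y$) and $T$-invariant subset $R\subseteq Y$ such that for every $x\in R$,
\[\overline{\{(T^{[p_{1,N}(n)]}x,\ldots,T^{[p_{\ell,N}(n)]}x):\;N\in\mathbb{N},\;1\leq n\leq N\}}=Y\times\cdots\times Y.\]
Residuality in the compact (hence Baire) metric space $Y$ forces $R\neq \emptyset$.

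I would then verify that $R$ works as the advertised set inside $X$. Fix any $x\in R$. Since $Y$ is closed and $T$-invariant in $X$, the orbit closure $\overline{\{T^n x:n\in\mathbb{N}\}}$ computed in $X$ coincides with its computation in $Y$, and by minimality of $(Y,T|_Y)$ this orbit closure equals $Y$. Hence the right-hand side of the identity in the corollary becomes $Y\times\cdots\times Y$, which matches the left-hand side by the displayed equation above. Finally, $R$ is $T$-invariant inside $Y$, and because $Y$ itself is $T$-invariant in $X$, the set $R$ is $T$-invariant in $X$ as well.

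The argument is essentially packaging; the only substantive input beyond Theorem~\ref{T:topol} is the classical fact that each orbit in a minimal system is dense, which upgrades the ``$Y\times\cdots\times Y$'' conclusion of Theorem~\ref{T:topol} into the product of individual orbit closures demanded by the corollary. I do not anticipate any genuine obstacle: the only subtleties are to remember that ``residual'' in a non-empty compact metric space delivers non-emptiness via Baire, and that taking $Y$ closed in $X$ makes orbit closures computed relative to $Y$ agree with those computed relative to $X$.
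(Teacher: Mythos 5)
Your proof is correct and follows exactly the route the paper indicates: pass to a minimal subsystem via Zorn's lemma, apply Theorem~\ref{T:topol} there, and use minimality (together with the observation that orbit closures computed in a closed invariant subset agree with those computed in $X$) to identify the right-hand side with the product $Y\times\cdots\times Y$. The paper merely asserts this in one sentence; you have simply filled in the same details.
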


\begin{remark}
Following the method of \cite{K0} (which extended the one from \cite{FHK}), as it was adapted in \cite{KK}, the interested, and somewhat familiar to the topic, reader can state and prove the corresponding convergence results to Theorems~\ref{T:main} and ~\ref{T:mc} along prime numbers (or, for the sake of simplicity, Theorems~\ref{T:main_new} and ~\ref{T:mc_new}), together with the corresponding corollaries, as well as recurrence results along primes shifted by $\pm 1$.

While it is not trivial, this can be achieved, for the uniformity estimates, that allow one to pass from averages along natural numbers to the corresponding ones along primes, can be used for variable polynomial iterates of bounded degree (i.e., one can deal with the ``good'' and ``super nice'' variable iterates under consideration).
\end{remark}

\section{Some background material}

In this section we list some materials that will be used for the multiple average case.

\subsection{Factors}
 A \emph{homomorphism} from a system $(X,\mathcal{B}, \mu, T)$ onto a system $(Y, \mathcal{Y}, \nu,$ $S)$ is a measurable map $ \pi :X'\to Y' $, where $ X' $ is a
$ T $-invariant subset of $ X $ and $ Y' $ is an $ S $-invariant subset of $ Y $, both of full measure, such that $ \mu \circ \pi^{-1} =\nu $ and $ S\circ \pi(x)=\pi \circ T(x) $ for $ x\in X' $. When we have such a homomorphism we say that the system
$ (Y, \mathcal{Y}, \nu, S) $ is a \emph{factor} of the system $ (X,\mathcal{B}, \mu, T) $. If the factor map $ \pi :X'\to Y' $ can be chosen to be injective, then we say that the systems
$ (X,\mathcal{B}, \mu, T) $ and $ (Y, \mathcal{Y}, \nu, S) $ are \emph{isomorphic}.
 A factor can also be characterised by
$ \pi^{-1}(\mathcal{Y}) $ which is a $ T $-invariant sub-$ \sigma $-algebra of
$ \mathcal{B} $, and, conversely, any $ T $-invariant sub-$ \sigma $-algebra of
$ \mathcal{B} $ defines a factor. By abusing the terminology, we denote by the same letter the $ \sigma $-algebra $ \mathcal{Y} $ and its inverse image by $ \pi $, so, if $ (Y, \mathcal{Y}, \nu, S) $ is a factor of $ (X,\mathcal{B}, \mu, T) $, we think of
$ \mathcal{Y} $ as a sub-$ \sigma $-algebra of $ \mathcal{B}$.

\subsubsection{Seminorms}
We follow \cite{HK99} and \cite{CFH} for the inductive definition of the seminorms $\nnorm{\cdot}_k.$ More specifically, the definition that we use here follows from \cite{HK99} (in the ergodic case), \cite{CFH} (in the general case) and the use of von Neumann's mean ergodic theorem.

Let $(X,\mathcal{B},\mu,T)$ be a system and $f\in L^\infty(\mu).$  We define inductively the seminorms $\nnorm{f}_{k,\mu,T}$ (or just $\nnorm{f}_k$ if there is no room for confusion) as follows: For $k=1$ we set
\[ \nnorm{f}_{1}:= \norm{\mathbb{E}(f|\mathcal{I}(T))}_{2}.\]
Recall that the conditional
expectation $\mathbb{E}(f|\mathcal{I}(T))$ satisfies $\int \mathbb{E}(f|\mathcal{I}(T))\;d\mu=\int f\;d\mu$ and $ T\mathbb{E}(f|\mathcal{I}(T))=\mathbb{E}(Tf|\mathcal{I}(T)).$

For $k\geq 1$ we let
\[\nnorm{f}^{2^{k+1}}_{k+1}:=\lim_{N\to\infty}\frac{1}{N}\sum_{n=1}^{N}\nnorm{\bar{f}\cdot T^n f}^{2^k}_k .\]
All these limits exist and $\nnorm{\cdot}_k $ define seminorms on $ L^\infty(\mu)$ (\cite{HK99}).  Also, we remark that for all $k\in \mathbb{N}$ we have $\nnorm{f}_{k}\leq \nnorm{f}_{k+1}$ and $\nnorm{f\otimes\bar{f}}_{k,\mu\times\mu,T\times T}\leq \nnorm{f}^2_{k+1,\mu,T}.$

\subsubsection{Nilfactors}
Using the seminorms we defined above, we can construct factors $ \mathcal{Z}_k=\mathcal{Z}_k(T) $ of $ X $ characterized by:
\[ \text{ for } f\in L^\infty(\mu),\;\;\; \mathbb{E}(f|\mathcal{Z}_{k-1})=0 \text{ if and only if } \nnorm{f}_k=0. \]

The following profound fact from \cite{HK99} (see also the independent work of \cite{Ziegler}) shows that for every $ k \in \mathbb{N} $ the factor
$\mathcal{Z}_k$ has a purely algebraic structure; approximately, we can assume that it is a $k$-step nilsystem (see Subsection~\ref{Nil} below for the definitions):

\begin{theorem}[Structure Theorem, \cite{HK99,Ziegler}]\label{T:HK}
Let $(X,\mathcal{B},\mu,T)$ be an ergodic system and $ k\in\mathbb{N}$. Then the factor $\mathcal{Z}_k(T)$ is an inverse limit of $ k $-step nilsystems.\footnote{ By this we mean that there exist $T$-invariant sub-$\sigma$-algebras $\mathcal{Z}_{k,i}, i\in \mathbb{N}$, of $\mathcal{B}$ such that $\mathcal{Z}_k=\bigcup_{i\in\mathbb{N}}\mathcal{Z}_{k,i}$
and for every $i\in \mathbb{N}$, the factors induced by  the $\sigma$-algebras $\mathcal{Z}_{k,i}$ are isomorphic to $k$-step nilsystems.}
\end{theorem}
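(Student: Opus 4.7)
The plan is to follow the cubic-averages approach of Host and Kra. First, I would give a more concrete realisation of the seminorms by introducing, inductively, cubic measures $\mu^{[k]}$ on $X^{2^k}$: starting from $\mu^{[0]}=\mu$, one defines $\mu^{[k+1]}$ as the relatively independent self-joining of two copies of $\mu^{[k]}$ over the $\sigma$-algebra of $T^{[k]}$-invariant sets, where $T^{[k]}$ denotes the diagonal action on $X^{2^k}$. A direct induction using the mean ergodic theorem then shows that $\nnorm{f}_k^{2^k}$ equals the integral of an appropriate $2^k$-fold tensor product of $f$ and $\bar f$ against $\mu^{[k]}$, which coincides with the inductive definition in the excerpt. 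From this cubic description one extracts the factor $\mathcal{Z}_{k-1}$ as the factor generated by the ``side'' $\sigma$-algebras of the $k$-dimensional cube.

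Next, I would reduce to the case $X=\mathcal{Z}_k$ and analyse the extension $\pi\colon\mathcal{Z}_k\to\mathcal{Z}_{k-1}$, arguing by induction on $k$ with base case $k=1$ handled by the classical Kronecker factor. The key structural input is that this extension is \emph{isometric} and in fact abelian, i.e., isomorphic to a skew-product $\mathcal{Z}_{k-1}\times_\sigma H$ for some compact abelian group $H$ and cocycle $\sigma\colon \mathcal{Z}_{k-1}\to H$, and moreover $\sigma$ is of ``type $k$'' in the sense that an associated $k$-th cubic coboundary is trivial. Using this, one constructs a Polish group $\mathcal{G}_k$ of measure-preserving transformations of $\mathcal{Z}_k$ that commute with $T$ modulo lower-order corrections; the inductive hypothesis that $\mathcal{Z}_{k-1}$ is already an inverse limit of $(k-1)$-step nilsystems, combined with the vanishing of the cubic coboundary, forces the lower central series of $\mathcal{G}_k$ to terminate at level $k$.

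Having a $k$-step nilpotent group acting transitively on (the ergodic components of) a finitely generated subfactor of $\mathcal{Z}_k$, a Mackey-type representation theorem identifies the system with translation by an element of a homogeneous space $G/\Gamma$, for some $k$-step nilpotent Lie group $G$ and discrete cocompact subgroup $\Gamma$. The inverse-limit conclusion then follows by performing this construction on a countable dense collection of bounded measurable functions and taking the directed union of the resulting finite-dimensional $k$-step nilfactors, which exhausts $\mathcal{Z}_k$ by a standard approximation argument.

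The main obstacle, in my view, lies in the construction of the group $\mathcal{G}_k$ and the proof that its nilpotency class matches precisely the seminorm level $k$. This requires a careful study of the parallelepiped structures on $X^{2^k}$ and their symmetry groups, and it is precisely here that the induction on $k$ is essential and subtle: one must show that every cocycle whose cubic coboundary vanishes can be re-expressed, up to a coboundary, as a nilpotent-translation cocycle over the nilmanifold structure already in place on $\mathcal{Z}_{k-1}$. Everything else --- defining the seminorms, extracting the factors, and passing to the inverse limit --- is essentially formal once this algebraic backbone is in place.
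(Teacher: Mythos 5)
The paper does not prove this theorem: it is quoted verbatim as the Host--Kra Structure Theorem and attributed to \cite{HK99} and \cite{Ziegler}, where its (very long) proof lives. There is thus no internal proof in the paper to compare your sketch against.

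That said, as a high-level summary of the Host--Kra argument your outline is essentially faithful: the cubic measures $\mu^{[k]}$ built by iterated relatively independent self-joinings, the identity $\nnorm{f}_k^{2^k}=\int f\otimes\bar f\otimes\cdots\,d\mu^{[k]}$, the reduction to the pro-nilfactor $\mathcal{Z}_k$ and induction via the abelian isometric extension $\mathcal{Z}_k\to\mathcal{Z}_{k-1}$ governed by a cocycle of ``type $k$'', the construction of the Host--Kra group $\mathcal{G}_k$ of transformations commuting with $T$ up to lower order, the Mackey/nilmanifold realisation, and finally the passage to an inverse limit by exhausting a countable dense family of functions --- these are indeed the skeleton of the proof. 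You are also right that the hard core is the cocycle analysis (showing that a cocycle of type $k$ is, up to a coboundary, a nil-cocycle) and the nilpotency bound on $\mathcal{G}_k$; the remaining ingredients are technical but more routine. What your sketch omits, and what consumes the bulk of \cite{HK99}, is precisely the verification that the candidate group $\mathcal{G}_k$ is a genuine locally compact (indeed Lie, after restricting to finitely generated subfactors) nilpotent group acting transitively, and the delicate inductive cocycle reduction that identifies the compact group $H$ in the abelian extension and produces a bona fide nilmanifold structure; these steps are not ``essentially formal'' and should not be waved through. As a reading guide to the Host--Kra proof your proposal is sound; as a self-contained argument it leaves the central algebraic and measure-theoretic work undone, which is of course to be expected for a theorem of this depth and why the paper simply cites it.
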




Because of this result, we call $ \mathcal{Z}_k $ the $ k $-\emph{step nilfactor} of the system. The smallest factor that is an extension of all finite step nilfactors is denoted by $ \mathcal{Z}=\mathcal{Z}(T) $, meaning, $\mathcal{Z}=\bigvee_{k\in\mathbb{N}}\mathcal{Z}_k $, and is called the \emph{nilfactor} of the system. The nilfactor
$ \mathcal{Z} $ is of particular interest because it controls the limiting behaviour in $ L^2(\mu) $ of the averages in \eqref{E:main33} and \eqref{E:mc3}.

\subsection{Nilmanifolds}\label{Nil}
Let $ G $ be a $ k $-step nilpotent Lie group, meaning $ G_{k+1}=\{e\} $ for some $ k\in \mathbb{N} $, where $ G_k=[G,G_{k-1}] $ denotes the $ k $-th commutator subgroup, and $ \Gamma $ a discrete cocompact subgroup of $ G $. The  compact homogeneous space
$ X=G/\Gamma $ is called \emph{$k$-step nilmanifold} (or \emph{nilmanifold}).
The group $ G $ acts on $ G/\Gamma $ by left translations, where the translation  by an element $ b\in G $ is given by $ T_b(g\Gamma)=(bg)\Gamma $. We denote by $ m_X $ the normalized \emph{Haar measure} on $ X,$ i.e., the unique probability measure that is invariant under the action of $ G $, and by $ \mathcal{G}/\Gamma $  the Borel $ \sigma $-algebra of $ G/\Gamma $. If $ b\in G $, we call the system $(G/\Gamma, \mathcal{G}/\Gamma,m_X,T_b)$ $k$-\emph{step nilsystem} (or \emph{nilsystem}) and the elements of  $ G $ \emph{nilrotations}.

\subsubsection{Equidistribution}\label{Ss:ud} For a connected and simply connected Lie group $G,$ let $ \exp :\mathfrak{g}\to G $ be the exponential map, where $ \mathfrak{g} $ is the Lie algebra of $ G $. For $ b \in G $ and $ s\in \mathbb{R} $ we define the element $ b^s $ of $ G $ as follows: If $ X\in \mathfrak{g} $ is such that $ \exp (X)=b $, then $ b^s=\exp(sX) $ (this is well defined since under the aforementioned assumptions $ \exp$ is a bijection).

If $ (a(n))_{n} $ is a sequence of real numbers and $ X=G/\Gamma $ is a nilmanifold with $ G $ connected and simply connected, we say that the sequence
$ (b^{a(n)}x)_{n},$ $b\in G,$ is \emph{equidistributed} in a subnilmanifold $Y$ of $ X $, if for every $ F\in C(X) $ we have
\begin{equation}\label{E:equi}
    \lim_{N\to \infty} \frac{1}{N}\sum_{n=1}^N F(b^{a(n)}x)=\int F\; d m_Y.\footnote{ If $ (a(n))_{n}\subseteq \mathbb{Z},$ we can drop the assumptions that $ G $ is connected and simply connected.}
\end{equation}


For the following claims, one can check the linear case in \cite{L} (\cite[Section~2]{L}, and in particular the theorem in \cite[Subsection~2.17]{L}, together with \cite[Theorem~2.19]{L}) which covers the $\mathbb{Z}$-actions case, and \cite{Rat} for the analogous result for $\mathbb{R}$-actions. A nilrotation $ b\in G $ is \emph{ergodic} (or \emph{acts ergodically}) on $ X $, if the sequence $ (b^n\Gamma)_{n} $ is dense in $X.$ If $ b\in G $ is ergodic, then for every
$ x\in X $ the sequence $ (b^nx)_{n} $ is equidistributed in $ X $.
The orbit closure
$Z:=\overline{(b^n\Gamma)}_{n} $ of $ b \in G$ has the structure of a nilmanifold with $ (b^n\Gamma)_{n} $ being equidistributed in $Z$. Analogously, if $ G $ is connected and simply connected, then
$W:=\overline{(b^s\Gamma)}_{s\in \mathbb{R}} $ is a nilmanifold with $ (b^s\Gamma)_{s\in \mathbb{R}} $ being equidistributed in $W$.

\subsubsection{Change of base point formula}\label{Ss:3.2.2} Let $ X=G/\Gamma $ be a nilmanifold. As mentioned before, for every $ b\in G $ the sequence $ (b^n\Gamma)_{n} $ is equidistributed in $ X_b:=\overline{\{b^n\Gamma:n\in \mathbb{N}\}} $. Using the identity $ b^ng=g(g^{-1} bg)^n $ we see that the nil-orbit $ (b^ng\Gamma)_{n} $ is equidistributed in the set $ gX_{g^{-1} bg} $. A similar formula holds when $ G $ is connected and simply connected, where we replace the $ n\in \mathbb{N} $ with $ s\in \mathbb{R} $ and the nilmanifold $ X_b $ with  $Y_b:=\overline{\{b^s\Gamma:\;s\in \mathbb{R}\}}$.

\subsubsection{Lifting argument}\label{Ss:3.2.3} Giving a topological group $G,$ we denote the connected component of its identity element, e, by $G_0.$  To assume that a nilmanifold has a representation $ G/\Gamma,$ with $ G $ connected and simply connected, one can follow for example \cite{L}.  Since all our results deal with an action on $ X $ of finitely many elements of $ G $ we can and will assume that the discrete group $ G/G_0 $ is finitely generated (see \cite[Subsection~2.1]{L}). In this case one can show (see \cite[Subsection~1.11]{L}) that $ X=G/\Gamma $ is isomorphic to a sub-nilmanifold of a nilmanifold $ \tilde{X}=\tilde{G}/\tilde{\Gamma} $, where $ \tilde{G} $ is a connected and simply connected nilpotent Lie group, with all translations from $ G $ ``represented'' in $ \tilde{G} $.\footnote{ In practice this means that for every $ F\in C(X) $, $ b\in G $ and $ x\in X $, there exists $ \tilde{F}\in C(\tilde{X}) $, $ \tilde{b}\in \tilde{G} $ and $ \tilde{x}\in \tilde{X} $, such that $ F(b^nx)=\tilde{F}(\tilde{b}^n\tilde{x}) $ for every $ n\in \mathbb{N} $.} We caution the reader that such a construction is only helpful when our working assumptions impose no restrictions on a nilrotation. Any assumption made about  $ b\in G, $ which acts on a nilmanifold $ X $, is typically lost when passing to the lifted nilmanifold
$ \tilde{X} $.



\section{Finding the characteristic factor}\label{S:4}

In this technical section we find characteristic factors for the expressions that appear in Theorems~\ref{T:main} and \ref{T:mc}. In both cases, we will show that the nilfactor is characteristic (Proposition~\ref{L:characteristic_factor} and Proposition~\ref{L:characteristic_single} respectively).


We first start with the degree $1$ case and then move on to the general one. At this point we recall the notion of a characteristic factor (adapted to our study):

\begin{definition}\label{D:CF}
For $\ell\in \mathbb{N}$ let $ (X,\mathcal{B}, \mu, T) $ be a system. The sub-$ \sigma $-algebra $ \mathcal{Y} $ of $ \mathcal{B} $ is a \emph{characteristic factor} for the variable tuple of integer-valued sequences
$ (a_{1,N},\ldots, a_{\ell,N})_N$
if it is $ T $-invariant and
\[\lim_{N\to\infty} \norm{\frac{1}{N}\sum_{n=1}^N \prod_{i=1}^\ell T^{a_{i,N}(n)}f_i-
\frac{1}{N}\sum_{n=1}^N \prod_{i=1}^\ell T^{a_{i,N}(n)}\tilde{f}_i}_{2}= 0, \]
for all $f_i\in L^\infty(\mu),$ where $\tilde{f}_i=\mathbb{E}(f_i|\mathcal{Y})$,
$1\leq i\leq \ell.$\footnote{ Equivalently, $\lim_{N\to\infty}\norm{\frac{1}{N}\sum_{n=1}^N T^{a_{1,N}(n)}f_1\cdot\ldots\cdot T^{a_{\ell,N}(n)}f_\ell}_{2}= 0$ if $\mathbb{E}(f_i|\mathcal{Y})=0$ for some $1\leq  i\leq  \ell.$}
\end{definition}

\subsection{The base case}\label{Sub:linear}
The following crucial lemma, which can be understood as a ``change of variables'' procedure, will be used in the base $\ell=1$ case for $\deg p_N=1,$ i.e., $p_N(n)=a_Nn+b_N.$ We will assume that $(b_N)_N$ is bounded, so, as such error terms do not affect our averages, we mainly have to deal with the expression $\frac{1}{N}\sum_{n=1}^N T^{[a_Nn]}f.$

\begin{lemma}\label{L:comparison}
Let $(a_N)_N\subseteq (0,+\infty)$ bounded with $(a_N\cdot N)_N$ tending increasingly to $\infty$. For any sequence $(c_N(n))_{n,N}\subseteq [0,\infty)$ we have
\[\limsup_{N\to\infty} \frac{1}{N}\sum_{n=1}^{N} c_{[a_N N]}([a_N n])\ll \limsup_{N\to\infty} \frac{1}{N}\sum_{n=1}^{N} c_N(n).\]
\end{lemma}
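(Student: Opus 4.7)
The plan is to perform a ``change of variables'' $m:=[a_N n]$ inside the left-hand average. Write $M_N:=[a_N N]$; by hypothesis $(M_N)_N$ is a non-decreasing sequence of positive integers tending to $\infty$. A basic observation (to be used at the very end) is that for any sequence $(b_L)_L$ of reals one has $\limsup_{N\to\infty} b_{M_N}\le \limsup_{L\to\infty} b_L$, since $(M_N)_N$ is a (possibly repeating) subsequence of $\mathbb{N}$.

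The core of the proof will be a counting estimate: for each $m\in\{0,1,\dots,M_N\}$, the fibre $\{n\in\mathbb{Z}:[a_N n]=m\}$ equals $\{n\in\mathbb{Z}:m\le a_N n<m+1\}$, an integer interval of length $1/a_N$, and hence of cardinality at most $\lceil 1/a_N\rceil\le 1+1/a_N$. Regrouping the left-hand sum by the value of $[a_N n]$ therefore yields
\[
\sum_{n=1}^{N} c_{M_N}([a_N n])\;\le\;\Bigl(1+\frac{1}{a_N}\Bigr)\sum_{m=0}^{M_N} c_{M_N}(m).
\]

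Dividing by $N$ and using $M_N\le a_N N$ together with the boundedness of $(a_N)_N$ (say $a_N\le C$), the above becomes, for $N$ large enough,
\[
\frac{1}{N}\sum_{n=1}^{N} c_{M_N}([a_N n])\;\le\;\frac{1+a_N}{a_N N}\sum_{m=0}^{M_N} c_{M_N}(m)\;\le\;\frac{1+C}{M_N}\sum_{m=0}^{M_N} c_{M_N}(m).
\]
Taking $\limsup_{N\to\infty}$ on both sides and applying the subsequence observation to the sequence $b_L:=\tfrac{1}{L}\sum_{m=0}^{L}c_L(m)$ delivers the desired inequality; the stray $m=0$ summand contributes $O(c_{M_N}(0)/M_N)$ and is harmless in the intended applications, where $c_N(n)$ is uniformly bounded and so absorbs into the implicit $\ll$-constant.

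The main obstacle will be the multiplicity bookkeeping when $a_N<1$: the map $n\mapsto[a_N n]$ is then highly non-injective, and one must see that the multiplicity factor $\approx 1/a_N$, combined with $M_N\le a_N N$, produces exactly the scaling $1/N\asymp a_N/M_N$ needed to convert an average of length $N$ into one of length $M_N$. Everything downstream --- monotonicity of $(M_N)$, the subsequence extraction, and the final $\limsup$ comparison --- is then routine.
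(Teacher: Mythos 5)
Your proof is correct and follows essentially the same route as the paper's: both regroup the left-hand sum by the fibres of $n\mapsto[a_N n]$, bound the fibre multiplicity by roughly $1/a_N$, and use $[a_N N]\approx a_N N$ together with the boundedness of $(a_N)_N$ to renormalize before taking $\limsup$. The only superficial differences are that you use $[a_N N]\le a_N N$ directly where the paper passes through $[a_N N]/(a_N N)\to 1$, and you make explicit the subsequence observation and the harmless $m=0$ boundary term that the paper leaves implicit.
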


\begin{proof}
For a fixed $N\in \mathbb{N},$ since $(a_N)_N$ is bounded, we have the relation
\[\frac{1}{N}\sum_{n=1}^{N} c_{[a_N N]}([a_N n])\leq \left(\left[\frac{1}{a_N}\right]+1\right)\cdot a_N\cdot \frac{[a_N N]}{a_N N}\cdot \frac{1}{[a_N N]}\sum_{n=0}^{[a_N N]}c_{[a_N N]}(n).\] Since $a_N N\to \infty,$ we get that $[a_N N]/a_N N\to 1.$ Finally, using yet again that $(a_N)_N$ is bounded, we have
\[\left(\left[\frac{1}{a_N}\right]+1\right)\cdot a_N\leq a_N+1\ll 1.\] The result now follows by taking $\limsup.$
\end{proof}






Using Lemma~\ref{L:comparison}, following the argument of \cite[Lemma~5.2]{F2} we get:

\begin{lemma}\label{L:base_case_semi}
Let $(p_N)_N$ be a sequence of polynomials of degree $1$ of the form
\[p_N(n)=a_N n+b_N,\;n,N\in \mathbb{N},\] where $(a_N)_N, (b_N)_N$ are bounded sequences with $(a_N)_N\subseteq (0,+\infty)$ and $(a_N\cdot N)_N$ tending increasingly to $\infty.$ Then, for any system $(X,\mathcal{B},\mu,T)$ and $f_1\in L^\infty(\mu),$ we have
\begin{equation}\label{E:base_case_semi}
\limsup_{N\to\infty}\sup_{\norm{f_0}_\infty\leq 1}\frac{1}{N}\sum_{n=1}^{N}\left|\int f_0\cdot T^{[p_N(n)]} f_1\;d\mu\right|\ll \nnorm{f_1}_2.
\end{equation}
\end{lemma}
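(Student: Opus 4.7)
The plan is to follow the standard Cauchy--Schwarz/van der Corput reduction from \cite[Lemma~5.2]{F2}, with Lemma~\ref{L:comparison} playing the decisive ``change of variable'' role forced by the fact that the integer iterates $[p_N(n)]$ traverse integers at rate $\approx a_N$ rather than one at a time.

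First I would apply Cauchy--Schwarz twice: once in $n$ to pass to squared correlations, and then in $L^2(\mu\times\mu)$ after expanding the squared modulus as a double integral, using $\|f_0\otimes\bar f_0\|_{L^2(\mu\times\mu)} \leq 1$. With $G := f_1 \otimes \bar f_1$ and $S := T\times T$, this gives
\[\left(\frac{1}{N}\sum_{n=1}^N \left|\int f_0 \cdot T^{[p_N(n)]} f_1 \, d\mu\right|\right)^{\!4} \leq \frac{1}{N^2}\sum_{n,m=1}^{N}\bigl\langle G, S^{[p_N(m)]-[p_N(n)]} G\bigr\rangle,\]
and a direct computation identifies each term $\langle G, S^k G\rangle$ with $|\int \bar f_1 \cdot T^k f_1\, d\mu|^2 \geq 0$.

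Second, a fractional-parts calculation (writing $a_Nm+b_N = (a_N n + b_N) + a_N(m-n)$ and tracking $\{a_N n + b_N\}$) shows that
\[[p_N(m)]-[p_N(n)] = [a_N(m-n)] + \epsilon_{n,m},\quad \epsilon_{n,m} \in \{0,1\}.\]
Splitting by the value of $\epsilon$, reindexing $h := m-n$, and using the conjugation symmetry $h\leftrightarrow -h$ on correlations, the task reduces to bounding, for each $\epsilon\in\{0,1\}$, the quantity
\[\limsup_{N\to\infty}\frac{1}{N}\sum_{h=0}^{N-1}\Big|\int \bar f_1 \cdot T^{[a_N h]+\epsilon} f_1 \, d\mu\Big|^2.\]
This is exactly the shape that Lemma~\ref{L:comparison} is designed for: applying it with the bounded, $N$-independent sequence $c_M(h) := |\int \bar f_1 \cdot T^{h+\epsilon} f_1\, d\mu|^2$ converts $[a_N h]$ into $h$ at the cost of a bounded multiplicative constant.

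After a harmless index shift by the bounded $\epsilon$, the conditional-expectation bound $|\int g\, d\mu|^2 \leq \|\mathbb{E}(g\mid \mathcal{I}(T))\|_2^2$ together with the mean ergodic theorem identifies the resulting Ces\`aro average with
\[\lim_{N\to\infty}\frac{1}{N}\sum_{h=1}^N\bigl\|\mathbb{E}\bigl(\bar f_1 \cdot T^h f_1 \,\big|\, \mathcal{I}(T)\bigr)\bigr\|_2^2 = \nnorm{f_1}_2^4.\]
Taking fourth roots yields the asserted estimate, and since the final bound is independent of $f_0$, the supremum over $\|f_0\|_\infty\leq 1$ is absorbed for free. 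The main obstacle I anticipate is the fractional-parts identity for $\epsilon_{n,m}$ and the associated bookkeeping of the boundary terms after reindexing by $h$; once those are in place, Lemma~\ref{L:comparison} does the essential work of compensating for the fact that the iterates live at scale $a_N n$ rather than $n$.
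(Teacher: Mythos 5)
Your proof is correct, and it takes a mildly but genuinely different route from the paper's. Both arguments hinge on Lemma~\ref{L:comparison} as the ``change of variable'' engine, but the reductions leading up to it differ. The paper applies Cauchy--Schwarz once, then \emph{keeps} a near-optimal sequence of test functions $f_{0,[a_N N]}$ around (note the $[a_N N]$-indexing, chosen precisely so the indices match after Lemma~\ref{L:comparison} is applied), feeding the $M$-\emph{dependent} sequence $c_M(n)=\int F_{0,M}\cdot S^{n+e}F_1\,d\tilde\mu$ into the lemma, and only afterwards invokes Cauchy--Schwarz a second time together with the mean ergodic theorem. You instead apply Cauchy--Schwarz twice up front to eliminate $f_0$ entirely, expand $\bigl\|\tfrac1N\sum_n S^{[p_N(n)]}G\bigr\|_2^2$ into the double correlation sum $\tfrac1{N^2}\sum_{n,m}\langle G,S^{[p_N(m)]-[p_N(n)]}G\rangle$, and exploit the pointwise positivity $\langle G,S^kG\rangle=\bigl|\int\bar f_1\,T^kf_1\,d\mu\bigr|^2\geq 0$ to absorb the $n$-dependent error terms $\epsilon_{n,m}\in\{0,1\}$ by simply summing over the two possibilities. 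This leaves you with an $N$-\emph{independent} sequence $c(h)=\bigl|\int\bar f_1\,T^{h+\epsilon}f_1\,d\mu\bigr|^2$ in Lemma~\ref{L:comparison}, avoiding the paper's somewhat delicate $f_{0,[a_N N]}$ bookkeeping (which the paper's Remark~\ref{R:uniform} flags as the reason uniform Ces\`aro versions aren't obtained). Your route is therefore a bit cleaner at the level of bookkeeping; what you give up is that the final identification goes through the definition $\nnorm{f_1}_2^4=\lim_N\tfrac1N\sum_h\norm{\mathbb{E}(\bar f_1\,T^h f_1\mid\mathcal I(T))}_2^2$ plus the elementary bound $\bigl|\int g\,d\mu\bigr|\leq\norm{\mathbb{E}(g\mid\mathcal I(T))}_2$, rather than a single clean application of the mean ergodic theorem, but this is a cosmetic difference. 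The only detail you gesture at rather than fully write out --- the fact that $[p_N(m)]-[p_N(n)]=[a_N(m-n)]+\epsilon_{n,m}$ with $\epsilon_{n,m}\in\{0,1\}$ for all $m,n$, and the $\langle G,S^{-k}G\rangle=\langle G,S^kG\rangle$ symmetry used to restrict to $h=m-n\geq 0$ --- is indeed routine and checks out.
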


\begin{proof}
For every $N\in \mathbb{N}$ we choose functions $f_{0,N}$ with $\norm{f_{0,N}}_\infty\leq 1$  so that the corresponding average is $1/N$ close to its supremum $\sup_{\norm{f_0}_\infty\leq 1}.$ Inequality \eqref{E:base_case_semi}  follows if we show
\begin{equation}\label{E:base_case_semi1}
\limsup_{N\to\infty}\frac{1}{N}\sum_{n=1}^{N}\left|\int f_{0,[a_N N]}\cdot T^{[p_N(n)]} f_1\;d\mu\right|\ll \nnorm{f_1}_2.
\end{equation}
We write $[p_N(n)]=[a_N n+b_N]=[a_N n]+[b_N]+e(n,N),\; e(n,N)\in \{0,1\}.$

Let $E$ be the finite set where $([b_N]+e(n,N))_{n,N}$ takes values. We have that
 \begin{eqnarray*}
\frac{1}{N}\sum_{n=1}^{N}\left|\int f_{0,[a_N N]}\cdot T^{[p_N(n)]} f_1\;d\mu\right| & \ll & \max_{e\in E} \frac{1}{N}\sum_{n=1}^{N}\left|\int f_{0,[a_N N]}\cdot T^{[a_Nn]+e} f_1\;d\mu\right|.
\end{eqnarray*}
 Taking squares and using the Cauchy-Schwarz inequality,  the right-hand side of the previous inequality is bounded by
 \[ \max_{e\in E} \frac{1}{N}\sum_{n=1}^{N}\left|\int f_{0,[a_N N]}\cdot T^{[a_Nn]+e} f_1\;d\mu\right|^2=\max_{e\in E} \frac{1}{N}\sum_{n=1}^{N}\int F_{0,[a_N N]}\cdot S^{[a_Nn]+e} F_1\;d\tilde{\mu},\]
where $S=T\times T,$ $F_{0,[a_N N]}=f_{0,[a_N N]}\otimes \bar{f}_{0,[a_N N]},$ $F_1=f_1\otimes \bar{f}_1,$ and $\tilde{\mu}=\mu\times\mu.$  For every $e\in E$, using Lemma~\ref{L:comparison}, the $\limsup$ of the averages on the right-hand side of the
previous equality is bounded above by a constant multiple of
\[\limsup_{N\to\infty}\frac{1}{N}\sum_{n=1}^{N}\int F_{0,N}\cdot S^{n+e} F_1\;d\tilde{\mu}\leq \limsup_{N\to\infty}\norm{\frac{1}{N}\sum_{n=1}^{N} S^{n+e} F_1}_{L^2(\tilde{\mu})},\] where the last inequality follows by Cauchy-Schwarz and the fact that $\norm{F_{0,N}}_\infty\leq 1$. Using von Neumann's mean ergodic theorem, the last term is equal to
\[\norm{\mathbb{E}(S^e F_1|\mathcal{I}(S))}_{L^2(\tilde{\mu})}=\norm{\mathbb{E}(F_1|\mathcal{I}(S))}_{L^2(\tilde{\mu})}\leq \nnorm{f_1}_2^2,\] where we used the fact that $S$ is measure preserving, the definition of the seminorms $\nnorm{\cdot},$ and the relationship between the $k$-th seminorm
of the tensor product and the $k + 1$ seminorm on the base space. Inequality \eqref{E:base_case_semi1} now follows by removing the squares.
\end{proof}

\begin{remark}
Lemma~\ref{L:base_case_semi} holds also for sequences $(a_N)_N\subseteq (-\infty,0)$ with $(a_N\cdot N)_N$ tending decreasingly to $-\infty.$

\emph{Indeed, In this case we write
\[[p_N(n)]=-[-a_N n]+[b_N]+e(n,N),\;\;e(n,N)\in \{-1,0\},\]
so,
\[\frac{1}{N}\sum_{n=1}^{N}\left|\int f_0\cdot T^{[p_N(n)]} f_1\;d\mu\right|\ll \max_{e\in E}\frac{1}{N}\sum_{n=1}^{N}\left|\int f_0\cdot T^{-[-a_N n]} (T^e f_1)\;d\mu\right|,\]
where $E$ is a finite subset of integers. Since $-a_N>0$ and $(-a_N\cdot N)_N$ tends increasingly to $\infty,$ we get the conclusion by the previous lemma (working with $T^{-1}$ instead of $T$).}\footnote{ We note that since in Theorems~\ref{T:main} and ~\ref{T:mc} we assume that the transformation $T$ or $T^{-1}$
is ergodic, then the seminorms
taken with respect to either of those transformations coincide.}
\end{remark}

For multiple terms, we use the following variant of the classical van der Corput trick:

\begin{lemma}[Lemma~4.6, \cite{F2}]\label{L:vdc}
Let $(v_{N,n})_{N,n}$ be a bounded sequence in a Hilbert space. Then
\[\limsup_{N\to\infty}\norm{\frac{1}{N}\sum_{n=1}^N v_{N,n}}^2\leq 4 \limsup_{H\to\infty}\frac{1}{H}\sum_{h=1}^H \limsup_{N\to\infty}\left|\frac{1}{N}\sum_{n=1}^N \langle v_{N,n+h},v_{N,n}\rangle\right|.\]
\end{lemma}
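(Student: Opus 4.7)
This is the standard van der Corput inequality in Hilbert space, in a doubly-indexed form where $v_{N,n}$ may depend on $N$. My plan is to run the classical shift-and-Cauchy-Schwarz argument while carefully tracking how error terms behave in $H$ and $N$, so that the two nested $\limsup$s on the right-hand side can be used in the correct order.

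For fixed $H \in \mathbb{N}$, I would first write
\[\frac{1}{N}\sum_{n=1}^N v_{N,n} = \frac{1}{NH}\sum_{n=1}^N \sum_{h=1}^H v_{N,n+h} + E_{N,H},\]
where $E_{N,H}$ consists of boundary terms of norm $O(H/N)$, using the uniform boundedness of the family, say by $M$. For fixed $H$, $\norm{E_{N,H}} \to 0$ as $N \to \infty$. Applying Cauchy--Schwarz and expanding the inner squared norm gives
\[\norm{\frac{1}{NH}\sum_{n=1}^N \sum_{h=1}^H v_{N,n+h}}^2 \leq \frac{1}{NH^2}\sum_{n=1}^N \sum_{h_1,h_2=1}^H \langle v_{N,n+h_1},v_{N,n+h_2}\rangle.\]
I then isolate the diagonal $h_1 = h_2$, whose total contribution is at most $M^2/H$, and parametrise the off-diagonal by $h = h_1 - h_2 > 0$, using Hermitian symmetry to collect complex-conjugate pairs. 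For each fixed $h$, shifting the summation index inside the inner product reduces $\frac{1}{N}\sum_{n=1}^N \langle v_{N,n+h_2+h}, v_{N,n+h_2}\rangle$ to $\frac{1}{N}\sum_{n=1}^N \langle v_{N,n+h}, v_{N,n}\rangle$, with a further $O(H/N)$ boundary error.

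Assembling everything and using $(H-h)/H^2 \leq 1/H$, I arrive at
\[\limsup_{N\to\infty}\norm{\frac{1}{N}\sum_{n=1}^N v_{N,n}}^2 \leq \frac{M^2}{H} + \frac{2}{H}\sum_{h=1}^{H-1}\limsup_{N\to\infty}\left|\frac{1}{N}\sum_{n=1}^N \langle v_{N,n+h}, v_{N,n}\rangle\right|,\]
valid for every $H \in \mathbb{N}$. Taking $\limsup_{H\to\infty}$ kills the $M^2/H$ term, yielding the conclusion; the constant $4$ in the statement comfortably absorbs any bookkeeping losses (a tight version of this calculation actually gives $2$). The main subtlety is precisely the order of limits: because the shifts produce boundary errors that depend on $H$, one must send $N \to \infty$ first for each fixed $H$ and only then send $H \to \infty$, which is exactly how both sides of the inequality are arranged. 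Everything else is routine Cauchy--Schwarz and index manipulation.
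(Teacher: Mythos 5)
Your proof is correct. Note first that the paper itself gives no proof of this lemma; it is stated as a direct quotation of Lemma~4.6 from~\cite{F2}, so there is no ``paper proof'' to compare against. Your argument is the standard van der Corput derivation adapted to the doubly-indexed setting, and you handle the one genuine subtlety correctly: the boundary and shift errors are $O(H/N)$ and $O(H^2/N)$, so for each fixed $H$ they vanish as $N\to\infty$, which is why the $\limsup_{N\to\infty}$ must be taken inside the $h$-average before sending $H\to\infty$ --- exactly the order the statement requires. The chain you sketch,
\[
\limsup_{N\to\infty}\norm{\tfrac{1}{N}\textstyle\sum_{n=1}^N v_{N,n}}^2
\;\le\;\frac{M^2}{H}+\frac{2}{H}\sum_{h=1}^{H-1}\limsup_{N\to\infty}\left|\tfrac{1}{N}\textstyle\sum_{n=1}^N\langle v_{N,n+h},v_{N,n}\rangle\right|
\]
for every $H$, indeed yields the stated inequality with constant $2$ after $\limsup_{H\to\infty}$, comfortably inside the asserted $4$. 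Two small points worth making explicit if you write this up fully: (i) passing $\limsup_{N}$ inside the finite sum over $h$ uses subadditivity of $\limsup$ for finitely many terms; (ii) the shift $n\mapsto n+h_2$ inside the off-diagonal block produces errors of size $O(h_2 M^2)$ per $(h,h_2)$, and summing over at most $H^2$ pairs and dividing by $NH^2$ gives $O(HM^2/N)$, which is what justifies your replacement of $\frac{1}{N}\sum_n\langle v_{N,n+h_2+h},v_{N,n+h_2}\rangle$ by $\frac{1}{N}\sum_n\langle v_{N,n+h},v_{N,n}\rangle$ in the $N\to\infty$ limit. Both are routine and your plan already implies them.
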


We will now demonstrate the main idea behind the generalization of Lemma~\ref{L:base_case_semi}, for which we follow  \cite[Proposition~5.3, Case~1]{F2}. In that statement, to show
\[\limsup_{N\to\infty}\sup_{\norm{f_0}_\infty,\norm{f_i}_\infty\leq 1}\frac{1}{N}\sum_{n=1}^N\left|\int f_0\cdot T^{[a_1 n]}f_1\cdot T^{[a_2 n]}f_2\right|\ll \nnorm{f_j}_4,\] where $(i,j)=(1,2)$ or $(2,1),$ one uses Lemma~\ref{L:vdc}, compose with, say, $-[a_1 n],$ and gets the terms (notice that we keep the $h$-term in the first difference even though it is bounded)
\[[a_1 (n+h)]-[a_1 n]\approx [a_1 h],\]
\[[a_2 (n+h)]-[a_1 n]\approx [(a_2-a_1)n],\;\;\text{and}\]
\[[a_2 n]-[a_1 n]\approx [(a_2-a_1)n],\footnote{ Here, by ``$\approx$'', we mean ``equality modulo bounded error terms''.}\] so, after grouping the last two terms together, using the first one as constant (since it only depends on $h$--the average along which is crucial for the argument and is taken at the very end), one can use the base $\ell=1$ case. This is also how the inductive step works in the proof of the general $\ell\in \mathbb{N}$ case.

The variable case is more complicated to deal with. We demonstrate the main idea behind it by considering Example~\ref{Ex:1}, i.e., $p_{1,N}(n)=a_{1,N}n$  and $p_{2,N}(n)=a_{2,N}n,$  where $a_{1,N}=1/N^a$ and $a_{2,N}=1/N^b$ for $0<a<b<1$. The previous approach cannot be imitated, as, for example,
\[[a_{1,N}(n+h)]-[a_{1,N}n]\approx [a_{1,N}h]\] is in general a variable term and we cannot proceed with the same argument. What we do instead is to transform the iterates in the initial sum to the following:
\[(0,[a_{1,N}n],[a_{2,N}n])\approx \left(0,[a_{1,N}n],\left[\frac{a_{2,N}}{a_{1,N}}[a_{1,N}n]\right]\right)\xrightarrow[\text{}]{\text{Lemma~\ref{L:comparison}}}\left(0,n,\left[\frac{a_{2,N}}{a_{1,N}}n\right]\right),\] and then we use Lemma~\ref{L:vdc} (i.e., change of variables) to bound, eventually, everything by $\nnorm{f_1}_4.$ (To use Lemma~\ref{L:comparison} note the crucial fact that $(a_{2,N}/a_{1,N})_N$ is bounded.)
Additionally, to bound our expression by $\nnorm{f_2}_4,$ the previous argument needs an additional twist to work since the quantity $(a_{1,N}/a_{2,N})_N$ is unbounded. What we do in this case is to compose with $-[a_{2,N}n]$ to get
\begin{eqnarray*}(0,[a_{1,N}n],[a_{2,N}n]) & \approx & ([-a_{2,N}n],[(a_{1,N}-a_{2,N})n],0)
\\& \approx & \left(\left[\frac{-a_{2,N}}{a_{1,N}-a_{2,N}}[(a_{1,N}-a_{2,N})n]\right],[(a_{1,N}-a_{2,N})n],0\right)
\\&\rightarrow  & \left(\left[\frac{-a_{2,N}}{a_{1,N}-a_{2,N}}n\right],n,0\right),
\end{eqnarray*} where we used the change of variables.
As $(a_{2,N}/(a_{1,N}-a_{2,N}))_N$ is bounded, we can now finish the argument as before.

The previous discussion, naturally leads to the following assumption on the leading coefficients of the linear (variable) polynomials:

\begin{definition}\label{D:R_property}
A sequence of real numbers $(a_N)_N$ has the \emph{$R_1$-property} if
\begin{itemize}
    \item[(i)] it is bounded; and

    \item[(ii)] $(a_{N})_N\subseteq (0,+\infty)$ or $(-\infty,0)$ and $(|a_{N}|\cdot N)_N$ tends increasingly to $+\infty.$
\end{itemize}

For $\ell\in \mathbb{N}$ the sequences $\{(a_{i,N})_N:\;1\leq i\leq \ell\}$ have the \emph{$R_\ell$-property} if for all $1\leq i\leq \ell$:
\begin{itemize}
    \item[(i)] $(a_{i,N})_N$ has the $R_1$-property; and

    \item[(ii)]  at least one of the following three properties holds:\end{itemize}

\noindent (a) $\exists \; 1\leq j_0\neq i\leq \ell$ such that $\bigg\{\left(\frac{a_{j_0,N}-a_{j,N}}{a_{i,N}}\right)_N:\;1\leq$ $ j\neq j_0\leq \ell\bigg\}$ have the $R_{\ell-1}$-property.

\noindent  (b)  $\exists \; 1\leq j_0\neq i\leq \ell$ such that $(a_{i,N}-a_{j_0,N})_N$ has the $R_1$-property and the sequences $\bigg\{\left(\frac{a_{j,N}}{a_{i,N}-a_{j_0,N}}\right)_N:\;1\leq j\neq j_0\leq \ell\bigg\}$ have the $R_{\ell-1}$-property.

\noindent   (c)  $\exists \; 1\leq j_0\neq i\leq \ell$ such that $(a_{i,N}-a_{j_0,N})_N$ has the $R_1$-property and $1\leq k_0\neq j_0,$ $i\leq \ell$ such that  $\bigg\{\left(-\frac{a_{k_0,N}}{a_{i,N}-a_{j_0,N}}\right)_N,$ $\left(\frac{a_{j,N}-a_{k_0,N}}{a_{i,N}-a_{j_0,N}}\right)_N:\;1\leq j\neq k_0, j_0\leq \ell\bigg\}$ have the $R_{\ell-1}$-property.
\end{definition}

\begin{remark}\label{R:Ex1}
{The polynomial family of Example~\ref{Ex:1}, i.e., $p_{1,N}(n)=n/N^a,$ $p_{2,N}(n)=n/N^b,$ $n, N\in \mathbb{N},$ where $0<a<b<1,$ has the $R_2$-property.}

{Indeed,
skipping the trivial calculations, both sequences $(1/N^a)_N,$ $(1/N^b)_N$ have the $R_1$-property and for $i=1$ we have the $(ii)$ $(a)$ case, while for $i=2$ the $(ii)$ $(b)$ case.}
\end{remark}

We are now ready to extend Lemma~\ref{L:base_case_semi} to multiple terms along polynomials of degree~$1,$ following the main idea of \cite[Proposition~5.3, Case~1]{F2}:

\begin{proposition}\label{P:Linear}
Let $(p_{1,N})_N,\ldots,(p_{\ell,N})_N$ be polynomial sequences of degree $1$ of the form
\[p_{i,N}(n)=a_{i,N}n+b_{i,N},\;\;n, N \in \mathbb{N}, 1\leq i\leq \ell,\] where the sequences $(a_{i,N})_N,$ $1\leq i\leq \ell,$ have the $R_\ell$-property and $(b_{i,N})_N,$ $1\leq i\leq \ell,$ are bounded. Then, for every $f_1\in L^\infty(\mu),$ we have
\begin{equation}\label{E:Linear}
\limsup_{N\to\infty}\sup_{\norm{f_0}_\infty,\norm{f_2}_\infty,\ldots,\norm{f_\ell}_\infty\leq 1}\frac{1}{N}\sum_{n=1}^N \left|\int f_0\cdot \prod_{i=1}^\ell T^{[p_{i,N}(n)]}f_i\;d\mu\right|\ll \nnorm{f_1}_{2\ell}.\footnote{ The implicit constant in \eqref{E:Linear} depends on the bounds of the coefficients and the number of transformations $\ell.$ Note that, because of symmetry, we also have the respective estimates for $2\leq i\leq \ell$ with $f_i$ in place of $f_1.$}
\end{equation}
\end{proposition}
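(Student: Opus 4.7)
The plan is to proceed by induction on $\ell$, following the structure of \cite[Proposition~5.3, Case~1]{F2} augmented by the ``change of variables'' maneuver advertised in the paragraph preceding Definition~\ref{D:R_property}. The base case $\ell = 1$ is Lemma~\ref{L:base_case_semi} together with its subsequent remark (which handles both signs of the leading coefficient). For the inductive step, fix $\ell \geq 2$ and assume the conclusion for every $(\ell-1)$-tuple satisfying the $R_{\ell-1}$-property; by symmetry it suffices to bound the left-hand side of \eqref{E:Linear} by $\nnorm{f_1}_{2\ell}$. As in the proof of Lemma~\ref{L:base_case_semi}, I would choose $f_{0,N}$ with $\|f_{0,N}\|_\infty\leq 1$ that comes within $1/N$ of the $\sup$ in $f_0$, square the averaged absolute value and apply the Cauchy--Schwarz inequality in $n$, and lift to the product system $S:=T\times T$ acting on $(\tilde{X},\tilde{\mu}):=(X\times X,\mu\times\mu)$ with tensor functions $F_i:=f_i\otimes\bar{f}_i$; after this, it is enough to bound the linear average $\tfrac{1}{N}\sum_{n=1}^{N}\int F_{0,N}\cdot\prod_{i}S^{[p_{i,N}(n)]}F_i\,d\tilde{\mu}$ by (a power of) $\nnorm{F_1}_{2\ell-1,\tilde{\mu},S}$, since the tensor inequality $\nnorm{F_1}_{2\ell-1,\tilde{\mu},S}\leq\nnorm{f_1}_{2\ell,\mu,T}^{2}$ then closes the argument after extracting a square root.

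Invoking the $R_\ell$-property at $i=1$, I would select an index $j_0\neq 1$ (and in case~(c) also $k_0$) satisfying one of the alternatives~(a), (b), (c) of Definition~\ref{D:R_property}, and handle each case via a combination of three operations: \emph{(i)} composition inside the integral with $S^{-[p_{j_0,N}(n)]}$ (and in case~(c) further with $S^{-[p_{k_0,N}(n)]}$), which rewrites some iterates as differences $[(a_{i,N}-a_{j_0,N})n+\text{bounded}]$ and zeroes out others; \emph{(ii)} the change-of-variables inequality of Lemma~\ref{L:comparison}, with the new parameter $m\approx[a_{1,N}n]$ in case~(a) and $m\approx[(a_{1,N}-a_{j_0,N})n]$ in cases~(b) and~(c) --- the $R_1$-hypothesis required by Lemma~\ref{L:comparison} is exactly the one built into the definition of the $R_\ell$-property; and \emph{(iii)} a single application of van der Corput's inequality (Lemma~\ref{L:vdc}), whose shift~$h$ is ultimately averaged against the resulting inner products. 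Bounded floor discrepancies and the constants~$b_{i,N}$ are absorbed by a finite $\max$ over bounded integer shifts, exactly as in the base case. The iterates remaining after~(ii) are linear in~$m$ with leading coefficients equal to ratios of the shape listed in~(a), (b), or~(c), and the whole point of the definition of the $R_\ell$-property is that this reduced family carries the $R_{\ell-1}$-property.

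After step~(iii) one further iterate is absorbed into a modified ``$f_1$-factor'' of the form $\bar{f}_1\cdot S^{[\star h]}f_1$, and the inductive hypothesis applies to the remaining $(\ell-1)$-tuple in the product system $(\tilde{X},\tilde{\mu},S)$, producing an estimate controlled by $\nnorm{\bar{f}_1\cdot S^{[\star h]}f_1}_{2(\ell-1),\tilde{\mu},S}$. Averaging in~$h$ and unfolding via the recursive definition of the seminorms yields a bound by $\nnorm{F_1}_{2\ell-1,\tilde{\mu},S}$, completing the induction through the tensor inequality recalled above. The main obstacle is the verification, in each of the three cases~(a), (b), (c), that the family of coefficients produced by operations~(i)--(ii) is precisely one of the families whose $R_{\ell-1}$-property is asserted by Definition~\ref{D:R_property}; this is the reason that definition has its somewhat baroque nested form. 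A secondary but still delicate accounting task is tracking the bounded floor and fractional-part errors that arise from every composition, change of variables, and vdC step, and showing they all lie in finite integer sets that can be absorbed uniformly in $N$, $n$, and $h$.
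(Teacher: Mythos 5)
Your proposal identifies the right toolbox (Lemma~\ref{L:base_case_semi} for the base case, Lemma~\ref{L:comparison} for the change of variables, Lemma~\ref{L:vdc} for van der Corput, tensor lifting, and the case analysis driven by Definition~\ref{D:R_property}), and the overall inductive scheme matches the paper's. However, your description of the reduction in alternative~(a) is internally inconsistent and, if implemented literally, would produce a reduced coefficient family that does not match the one whose $R_{\ell-1}$-property alternative~(a) actually supplies. In the paper's treatment of~(a) (its ``Case~1'') there is \emph{no} precomposition with $S^{-[p_{j_0,N}(n)]}$ before van der Corput: one first factors every iterate through $[a_{1,N}n]$, writing $[a_{i,N}n]\approx\bigl[\tfrac{a_{i,N}}{a_{1,N}}[a_{1,N}n]\bigr]$; then invokes Lemma~\ref{L:comparison} with the new variable $m\approx[a_{1,N}n]$; then applies van der Corput; then precomposes with $S^{-n}$ (this is what renders the $F_1$-factor constant in $n$); and only \emph{then} precomposes with $S^{-\left[\left(\frac{a_{j_0,N}}{a_{1,N}}-1\right)n\right]}$. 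The coefficients that emerge are exactly the $-\frac{a_{j_0,N}-a_{j,N}}{a_{1,N}}$, i.e.\ precisely the family that alternative~(a) asserts has the $R_{\ell-1}$-property.

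If you instead precompose with $S^{-[p_{j_0,N}(n)]}$ first, as your step~(i) specifies for all cases, the iterates become $[(a_{i,N}-a_{j_0,N})n]$; they are no longer approximate multiples of $[a_{1,N}n]$, so the change of variables ``$m\approx[a_{1,N}n]$'' you then invoke cannot be applied. Correcting it to $m\approx[(a_{1,N}-a_{j_0,N})n]$ yields reduced coefficients whose common denominator is $a_{1,N}-a_{j_0,N}$, which is the shape of alternatives~(b) and~(c), not~(a) --- so the $R_{\ell-1}$-property you would then need is not the one the case~(a) hypothesis provides, and the induction does not close. The precomposition with $p_{j_0,N}$ (and, in case~(c), with $p_{k_0,N}$) belongs only to cases~(b) and~(c); in case~(a) the analogous role is played by the post-vdC precomposition $S^{-\left[\left(\frac{a_{j_0,N}}{a_{1,N}}-1\right)n\right]}$. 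The ordering of the operations therefore matters, because it determines which reduced coefficient family must be matched against the three alternatives of Definition~\ref{D:R_property}; this is the precise point at which your sketch, as written, would break down.
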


\begin{proof}

We use induction on $\ell.$ The base case, $\ell=1,$ follows from Lemma~\ref{L:base_case_semi}.
We assume that $\ell\geq 2$ and that the statement holds for $\ell-1.$

\medskip

\noindent{\bf{Case 1:}} For $i=1,$ the property (ii) (a) from the Definition~\ref{D:R_property} holds.
\begin{equation}\label{E:Linear1}
	\begin{split}
	&\quad \frac{1}{N}\sum_{n=1}^N \left|\int f_0\cdot T^{[a_{1,N}n+b_{1,N}]}f_1\cdot\ldots\cdot T^{[a_{\ell,N}n+b_{\ell,N}]}f_\ell\;d\mu\right|
	\\&= \frac{1}{N}\sum_{n=1}^N \left|\int f_0\cdot T^{[a_{1,N}n]+e_1(n,N)}f_1\cdot\prod_{i=2}^\ell T^{\left[\frac{a_{i,N}}{a_{1,N}}[a_{1,N}n]\right]+e_i(n,N)}f_i\;d\mu\right|
	\\&\ll \max_{e_1,\ldots,e_\ell\in E}\frac{1}{N}\sum_{n=1}^N \left|\int f_0\cdot T^{[a_{1,N}n]}(T^{e_1}f_1)\cdot\prod_{i=2}^\ell T^{\left[\frac{a_{i,N}}{a_{1,N}}[a_{1,N}n]\right]}(T^{e_i}f_i)\;d\mu\right|,
	\end{split}
	\end{equation}
where $E$ is a finite subset of integers (the error terms $e_i(n,N)$, as $(b_{i,N})_N,$ and $(a_{i,N}/a_{1,N})_N$ are bounded for $1\leq i\leq \ell,$ take finitely many values).

For every $N\in \mathbb{N}$ we now choose functions $f_{i,N}$ with $\norm{f_{i,N}}_\infty\leq 1$ for $i\in \{0,2,\ldots,\ell\},$ so that the last term in \eqref{E:Linear1} is $1/N$ close to $\sup_{\norm{f_0}_\infty,\norm{f_2}_\infty,\ldots,\norm{f_\ell}_\infty\leq 1}.$ Using the Cauchy-Schwarz inequality and the fact that $|a_{1,N}|\cdot N\to\infty,$ we have that \eqref{E:Linear} follows if we show, for each choice of  $e_1,\ldots,e_\ell\in E,$ that
\begin{equation}\label{E:Linear2}
\limsup_{N\to\infty}\frac{1}{N}\sum_{n=1}^N \left|\int f_{0,[a_{1,N}N]}\cdot T^{[a_{1,N}n]}(T^{e_1}f_1)\cdot\prod_{i=2}^\ell T^{\left[\frac{a_{i,N}}{a_{1,N}}[a_{1,N}n]\right]}(T^{e_i}f_{i,[a_{1,N}N]})\;d\mu\right|^2
	\end{equation} is bounded above by a constant multiple of $\nnorm{f_1}^2_{2\ell}.$
Using Lemma~\ref{L:comparison} it suffices to show
\begin{equation}\label{E:Linear3}
\limsup_{N\to\infty}\frac{1}{N}\sum_{n=1}^N \left|\int f_{0,N}\cdot T^{n}(T^{e_1}f_1)\cdot\prod_{i=2}^\ell T^{\left[\frac{a_{i,N}}{a_{1,N}}n\right]}(T^{e_i}f_{i,N})\;d\mu\right|^2\ll \nnorm{f_1}^2_{2\ell}.
	\end{equation}
The left-hand side of \eqref{E:Linear3} is equal to
\begin{equation*}\label{E:Linear4}
A:=\limsup_{N\to\infty}\frac{1}{N}\sum_{n=1}^N \int F_{0,N}\cdot S^{n}(S^{e_1}F_1)\cdot\prod_{i=2}^\ell S^{\left[\frac{a_{i,N}}{a_{1,N}}n\right]}(S^{e_i}F_{i,N})\;d\tilde{\mu},
	\end{equation*}
where $S=T\times T,$ $F_1=f_1\otimes\bar{f}_1,$ $F_{i,N}=f_{i,N}\otimes \bar{f}_{i,N},$ $i=0,2,\ldots,\ell,$ and $\tilde{\mu}=\mu\times\mu.$
Using the Cauchy-Schwarz inequality and Lemma~\ref{L:vdc}, we have that
\begin{equation*}\label{E:Linear5}
|A|^2\ll \limsup_{H\to\infty}\frac{1}{H}\sum_{h=1}^H A_h,
\end{equation*}
where
\begin{equation*}
	\begin{split}
	&\quad A_h:=\limsup_{N\to\infty}\frac{1}{N}\sum_{n=1}^N \bigg|\int S^{n+h}(S^{e_1}F_1)\cdot\prod_{i=2}^\ell S^{\left[\frac{a_{i,N}}{a_{1,N}}(n+h)\right]}(S^{e_i}F_{i,N})\
	\\
&  \quad\quad\quad\quad\quad\quad\quad\quad\quad\quad\quad\quad\quad\quad\quad\quad\quad\quad \cdot S^{n}(S^{e_1}\overline{F}_1)\cdot\prod_{i=2}^\ell S^{\left[\frac{a_{i,N}}{a_{1,N}}n\right]}(S^{e_i}\overline{F}_{i,N}) \;d\tilde{\mu} \bigg|.
	\end{split}
    \end{equation*}
Precomposing with the term $S^{-n}$ we get
\begin{equation}\label{E:change_to_h}
	\begin{split}
	&\quad A_h=\limsup_{N\to\infty}\frac{1}{N}\sum_{n=1}^N \bigg|\int S^{h}(S^{e_1}F_1)\cdot S^{e_1}\overline{F}_1\\
&  \quad\quad\quad\quad\quad\quad\quad\quad\quad\quad\quad \cdot\prod_{i=2}^\ell S^{\left[\left(\frac{a_{i,N}}{a_{1,N}}-1\right)n\right]+\left[\frac{a_{i,N}}{a_{1,N}}h\right]+e_{j}(n,h,N)}(S^{e_i}F_{i,N})
	\\
&  \quad\quad\quad\quad\quad\quad\quad\quad\quad\quad\quad\quad\quad\quad\quad\quad\quad \cdot \prod_{i=2}^\ell S^{\left[\left(\frac{a_{i,N}}{a_{1,N}}-1\right)n\right]+\tilde{e}_{j}(n,h,N)}(S^{e_i}\overline{F}_{i,N}) \;d\tilde{\mu}\bigg|
\\& \quad \quad= \limsup_{N\to\infty}\frac{1}{N}\sum_{n=1}^N \bigg|\int F_{1,h}\cdot\prod_{i=2}^\ell S^{\left[\left(\frac{a_{i,N}}{a_{1,N}}-1\right)n\right]}F_{i,h,n,N} \;d\tilde{\mu}\bigg|,
	\end{split}
    \end{equation}
where $F_{1,h}=S^{h}(S^{e_1}F_1)\cdot S^{e_1}\overline{F}_1,$ $F_{i,h,n,N}=S^{\left[\frac{a_{i,N}}{a_{1,N}}h\right]+e_{j}(n,h,N)}(S^{e_i}F_{i,N})\cdot S^{\tilde{e}_{j}(n,h,N)}\\($ $S^{e_i}\overline{F}_{i,N}),$ $2\leq i\leq \ell.$
 Using the hypothesis, for $i=1,$ there exists $2\leq j_0\leq \ell$ such that the sequences $\bigg\{\left(\frac{a_{j_0,N}-a_{j,N}}{a_{1,N}}\right)_N:\;1\leq j\neq j_0\leq \ell\bigg\}$ have the $R_{\ell-1}$-property. Precomposing with $S^{-\left[\left(\frac{a_{j_0,N}}{a_{1,N}}-1\right)n\right]}$ in the right-hand side of \eqref{E:change_to_h} we have that
 \begin{equation*}
	\begin{split}
	&\quad A_h=\limsup_{N\to\infty}\frac{1}{N}\sum_{n=1}^N \bigg|\int F_{1,h}\cdot\prod_{i=2}^\ell S^{\left[\left(\frac{a_{i,N}}{a_{1,N}}-1\right)n\right]}F_{i,h,n,N} \;d\tilde{\mu}\bigg|
\\&\quad\quad = \limsup_{N\to\infty}\frac{1}{N}\sum_{n=1}^N \bigg|\int F_{j_0,h,n,N}\cdot S^{\left[-\left(\frac{a_{j_0,N}}{a_{1,N}}-1\right)n\right]+e'_1(n,N)}F_{1,h}
\\&\quad\quad\quad\quad\quad\quad\quad\quad\quad\quad\quad\quad \quad\quad\quad\quad \cdot \prod_{2\leq i\neq j_0\leq \ell} S^{\left[-\left(\frac{a_{j_0,N}-a_{i,N}}{a_{1,N}}\right)n\right]}\tilde{F}_{i,h,n,N} \;d\tilde{\mu}\bigg|,
	\end{split}
    \end{equation*}
where $\tilde{F}_{i,h,n,N}=S^{e'_i(n,N)}F_{i,h,n,N}$ for some error terms $e'_i(n,N)\in \{0,1\}.$

As we previously highlighted, for every fixed $N,$ we can partition the set of integers so that $e_1'(n,N)$ is constant. So, fixing $e_1'\in \{0,1\},$ using the induction hypothesis, we have
\begin{equation*}
	\begin{split}
	&\quad A_h\ll \limsup_{N\to\infty}\sup_{\norm{F_0}_\infty,\norm{F_2}_\infty,\ldots,\norm{F_\ell}_\infty\leq 1}\frac{1}{N}\sum_{n=1}^N \bigg|\int F_0\cdot S^{\left[-\left(\frac{a_{j_0,N}}{a_{1,N}}-1\right)n\right]}(S^{e'_1}F_{1,h})
	\\
&  \quad\quad\quad\quad\quad\quad\quad\quad\quad\quad\quad\quad\quad\quad\quad\quad\quad\quad\quad \cdot\prod_{2\leq i\neq j_0\leq \ell} S^{\left[-\left(\frac{a_{j_0,N}-a_{i,N}}{a_{1,N}}\right)n\right]}F_{i} \;d\tilde{\mu}\bigg|
\\& \quad \quad \ll \nnorm{S^{e'_1}F_{1,h}}_{2(\ell-1)}=\nnorm{F_{1,h}}_{2(\ell-1)} =  \nnorm{S^{h}(S^{e_1}F_1)\cdot S^{e_1}\overline{F}_1}_{2(\ell-1)}
\\& \quad \quad = \nnorm{(T^{h+e_1}f_1\cdot T^{e_1}\overline{f}_1)\otimes \overline{(T^{h+e_1}f_1\cdot T^{e_1}\overline{f}_1)}}_{2(\ell-1)}\leq \nnorm{T^{h+e_1}f_1\cdot T^{e_1}\overline{f}_1}^2_{2\ell-1}.
	\end{split}
    \end{equation*}

So, using H\"older inequality and the definition of the seminorms $\nnorm{\cdot},$ we have
\begin{eqnarray*}
|A|^2 & \ll & \limsup_{H\to\infty}\frac{1}{H}\sum_{h=1}^H A_h \ll \limsup_{H\to\infty}\frac{1}{H}\sum_{h=1}^H \nnorm{T^{h+e_1}f_1\cdot T^{e_1}\overline{f}_1}^2_{2\ell-1}\\
& \leq & \limsup_{H\to\infty}\left(\frac{1}{H}\sum_{h=1}^H \nnorm{T^{h+e_1}f_1\cdot T^{e_1}\overline{f}_1}^{2^{2\ell-1}}_{2\ell-1} \right)^{1/2^{2(\ell-1)}}=\nnorm{T^{e_1}f_1}^4_{2\ell}=\nnorm{f_1}^4_{2\ell},
\end{eqnarray*}
hence, \eqref{E:Linear2} is bounded above by a constant multiple of $\nnorm{f_1}^2_{2\ell}$ as was to be shown.

\medskip

\noindent{\bf{Cases 2 \& 3:}} For $i=1,$ we either have property (ii) (b) or (ii) (c) in Definition~\ref{D:R_property}.

Here we will skip the details already outlined in Case 1.
If $2\leq j_0\leq \ell$ is the integer guaranteed by Definition~\ref{D:R_property}, the integrand in the last part of equation \eqref{E:Linear1} will become (setting, without loss, $e_i=0$)
\begin{equation*}
\begin{split}
&\quad f_{j_0}\cdot T^{[(a_{1,N}-a_{j_0,N})n]}f_1\cdot T^{\left[-\frac{a_{j_0,N}}{a_{1,N}-a_{j_0,N}}[(a_{1,N}-a_{j_0,N})n]\right]}f_0\\
&\quad\quad\quad\quad\quad\quad\quad\quad\quad\quad\quad\quad\quad\quad\quad\quad\quad\quad\cdot\prod_{2\leq j\neq j_0\leq \ell}T^{\left[\frac{a_{j,N}-a_{j_0,N}}{a_{1,N}-a_{j_0,N}}[(a_{1,N}-a_{j_0,N})n]\right]}f_j
\end{split}
\end{equation*}
and the one in equation \eqref{E:change_to_h}
\[ F_{1,h}\cdot S^{\left[\left(\frac{-a_{j_0,N}}{a_{1,N}-a_{j_0,N}}-1\right)n\right]}F_{0,h,n,N}\cdot \prod_{2\leq j\neq j_0\leq \ell} S^{\left[\left(\frac{a_{j,N}-a_{j_0,N}}{a_{1,N}-a_{j_0,N}}-1\right)n\right]}F_{i,h,n,N}.\]
Precomposing with the term $S^{-\left[\left(\frac{-a_{j_0,N}}{a_{1,N}-a_{j_0,N}}-1\right)n\right]}=S^{-\left[\frac{a_{1,N}}{a_{1,N}-a_{j_0,N}}n\right]}$ (for Case 2) and with $S^{-\left[\left(\frac{a_{k_0,N}-a_{j_0,N}}{a_{1,N}-a_{j_0,N}}-1\right)n\right]} = S^{-\left[\frac{a_{k_0,N}-a_{1,N}}{a_{1,N}-a_{j_0,N}}n\right]}$ (for Case 3--where $2\leq k_0\neq j_0\leq \ell$ is the one guaranteed by Definition~\ref{D:R_property}), we can continue (using the induction hypothesis) and finish the argument as in Case 1. The proof of the statement is now complete.
\end{proof}

\begin{remark}\label{R:uniform}
To the best of our knowledge, when we deal with norm convergence of averages of (non-variable) polynomial iterates, we can always replace the conventional Ces\`aro averages, i.e., $\lim_{N\to\infty}\frac{1}{N}\sum_{n=1}^N,$ with the corresponding uniform ones, i.e., $\lim_{N-M\to\infty}\frac{1}{N-M}\sum_{n=M}^{N-1}.$  Our method though, exactly because of the choice of functions $f_{i,[a_{1,N}N]}$ (to go from equation \eqref{E:base_case_semi} to \eqref{E:base_case_semi1} and from equation \eqref{E:Linear1} to \eqref{E:Linear2}), cannot guarantee the corresponding uniform results.
\end{remark}

\subsection{The general case}\label{S:general_case}

We start by recalling (see, for example, \cite{Be} and \cite{F2}) the definition of the degree and type of a polynomial family that we will adapt in our study:

\begin{definition}\label{D:type-degree}
For $\ell\in \mathbb{N}$ let $\mathcal{P}=\{p_{1},\ldots,p_{\ell}\}$ be a family of non-constant real polynomials. We denote with $\deg(\mathcal{P})$ the maximum degree of the polynomials $p_i$'s and we call it \emph{degree} of $\mathcal{P}.$
If $w_i$ denotes the number of distinct leading coefficients of polynomials from $\mathcal{P}$ of degree $i$ and $d=\deg(\mathcal{P}),$ then the vector $(d,w_d,\ldots,w_1)$ is the \emph{type} of $\mathcal{P}.$ We order all the possible type vectors lexicographically.\footnote{ I.e., $(d,w_d,\ldots,w_1)>(d',w'_d,\ldots,w'_1)$ iff, reading from left to right, the first instance where the two vectors disagree the coordinate of the first vector is greater than that of the second one.}
\end{definition}

In order to reduce the complexity (i.e., the type) of a polynomial family, one has to use the classic PET (i.e., Polynomial Exhaustion Technique) induction.

At this point we remind the reader that the real polynomials $p_{1},\ldots,p_{\ell}$ are called \emph{essentially distinct} if they are, together with their pairwise differences, non-constant. Given such a family of polynomials $\mathcal{P}=\{p_{1},\ldots,p_{\ell}\},$ $p\in \mathcal{P}$ and $h\in \mathbb{N},$ the \emph{van der Corput operation (vdC-operation)}, acting on $\mathcal{P},$ gives the family
\[\mathcal{P}(p,h):=\{p_1(t+h)-p(t),\ldots,p_\ell(t+h)-p(t),p_1(t)-p(t),\ldots,p_\ell(t)-p(t)\},\footnote{ Notice that if $\mathcal{P}$ lists the polynomial iterates in the expression  $T^{[p_1(n)]}f_1\cdot\ldots\cdot T^{[p_\ell(n)]}f_\ell,$ then $\mathcal{P}(p,h)$ lists the respective iterates (modulo error terms) after using Lemma~\ref{L:vdc} and precomposing with the iterate $p(n).$}\] where we then remove all the terms that are bounded\footnote{ This is justified with the use of the Cauchy-Schwarz inequality.} and we group the ones of degree $1$ with bounded difference (i.e., of the same leading coefficient), thus obtaining a new family of essentially distinct polynomials.

The following lemma states that there exists a choice of a polynomial in a family of essentially distinct polynomials, via which the vdC-operation reduces its type:

\begin{lemma}[Lemma~4.5, \cite{F2}]\label{L:type}
Let $\ell\in \mathbb{N}$ and $\mathcal{P}=\{p_1,\ldots,p_\ell\}$ be a family of essentially distinct polynomials with $\deg(\mathcal{P})=\deg(p_1)\geq 2.$ Then there exists $p\in \mathcal{P}$ (of minimum degree in the polynomial family) such that for every large $h$ the family $\mathcal{P}(p,h)$ has type smaller than that of $\mathcal{P},$ and $\deg(\mathcal{P}(p,h))=\deg(p_1(t+h)-p(t)).$
\end{lemma}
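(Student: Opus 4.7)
The plan is the classical PET-induction choice: pick $p\in\mathcal{P}$ of minimum degree $k$ with leading coefficient $a$, and show that this single choice simultaneously secures the type reduction and the claimed top-degree identity. If several polynomials of $\mathcal{P}$ realize the minimum degree, I would retain the freedom to adjust $p$ so that, whenever possible, the leading coefficient of $p_1$ differs from that of $p$.

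For each $p_i\in\mathcal{P}$ set $q_i(t):=p_i(t+h)-p(t)$ and $r_i(t):=p_i(t)-p(t)$, so that $\mathcal{P}(p,h)$ is obtained from $\{q_i,r_i\}_{1\le i\le\ell}$ after removing constants and grouping degree-one members of equal leading coefficient. At any degree $j>k$, for every $p_i$ with $\deg(p_i)=j$ the shift $t\mapsto t+h$ preserves the $t^j$-coefficient and subtracting $p(t)$ alters only strictly lower-order terms; hence both $q_i$ and $r_i$ have degree $j$ with the same leading coefficient as $p_i$, so $w_j$ is preserved. At degree $j=k$, for each $p_i\ne p$ of degree $k$ with leading coefficient $b_i$, the $t^k$-coefficient of both $q_i$ and $r_i$ equals $b_i-a$: this is either a nonzero value never equal to $a$ itself, or zero (forcing the polynomial below degree $k$). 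Meanwhile $p$ itself produces $q=p(t+h)-p(t)$ of degree $k-1$ and $r=0$ which is discarded. Thus the leading coefficient $a$ disappears from the degree-$k$ slot, no new leading coefficient appears there, and $w_k$ strictly decreases.

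Consequently, the type vector of $\mathcal{P}(p,h)$ coincides with that of $\mathcal{P}$ at every coordinate corresponding to a degree strictly greater than $k$ and is strictly smaller at the coordinate for degree $k$, so the new type is lex-smaller. For the identity $\deg(\mathcal{P}(p,h))=\deg(p_1(t+h)-p(t))$: if $k<d$ then $p_1(t+h)-p(t)$ has degree $d$ and nothing in $\mathcal{P}(p,h)$ can exceed it; if $k=d$ and some polynomial of degree $d$ has leading coefficient different from that of $p_1$, I would pick $p$ among these so that $p_1(t+h)-p(t)$ still has degree $d$. The main obstacle is the remaining sub-case in which every polynomial of $\mathcal{P}$ shares the same degree-$d$ leading coefficient: then $p_1(t+h)-p(t)$ has degree at most $d-1$, and one must verify that, for every large $h$, its $t^{d-1}$-coefficient is nonzero and realizes the maximum degree present in $\mathcal{P}(p,h)$. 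This reduces to checking that the linear-in-$h$ contribution $adh$ coming from $p(t+h)-p(t)$ dominates the bounded-in-$h$ corrections arising from the lower-order coefficients of $p_i$ and $p$, which is precisely why the statement requires ``for every large $h$''.
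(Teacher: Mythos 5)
The paper does not actually prove this lemma; it is quoted from \cite{F2}, and your proposal reproduces the standard PET-induction argument used there, so the approach is the right one. The backbone is correct: with $p$ of minimum degree $k$ and leading coefficient $a$, for $j>k$ both $p_i(t+h)-p(t)$ and $p_i(t)-p(t)$ keep degree $j$ and the leading coefficient of $p_i$, so $w_j$ is unchanged; at degree $k$ the contribution of $p$ itself collapses (to a degree-$(k-1)$ polynomial and the discarded zero), and each other degree-$k$ member with leading coefficient $b_i$ is replaced by one with leading coefficient $b_i-a$.

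One assertion in your degree-$k$ analysis is false as stated: $b_i-a$ certainly can equal $a$ (take $b_i=2a$), so ``never equal to $a$ itself'' is wrong. Fortunately it is also unnecessary. What you actually need is only that the number of distinct nonzero leading coefficients at degree $k$ drops by one, and this holds because $b\mapsto b-a$ is injective and sends $a$ to $0$; the issue is the cardinality of the new set of leading coefficients, not whether any of them coincides with the old value $a$, since the entire collection of degree-$k$ leading coefficients is replaced rather than pruned. With that correction the lexicographic decrease of the type vector follows exactly as you say, whether $w_k$ stays positive or drops to zero (in which case the leading degree coordinate itself decreases).

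For the identity $\deg(\mathcal{P}(p,h))=\deg(p_1(t+h)-p(t))$ you correctly isolate the one nontrivial subcase, namely when every member of $\mathcal{P}$ has degree $d$ with the same leading coefficient $a$; there the $t^{d-1}$-coefficient of $p_1(t+h)-p(t)$ is $adh$ plus an $h$-independent constant, which is nonzero for all large $h$ (this is the only place the ``large $h$'' hypothesis is genuinely used), and similarly no iterate of $\mathcal{P}(p,h)$ can reach degree $d$. You leave this as a sketch rather than carrying it out, but the sketch is accurate and completes the argument once written down.
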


What is crucial for us is that every decreasing sequence of types is eventually (after finitely many steps) stationary and that, by using the previous lemma, there is a point at which all the polynomials have degree $1$. Also, by its definition, the vdC-operation preserves the essential distinctness property.





We will deal with sequences of families of real polynomials, $(\mathcal{P}_N)_N,$ where $\mathcal{P}_N=\{p_{1,N},\ldots,p_{\ell,N}\},$ $N\in \mathbb{N},$ 
that, for large $N$, 
has type independent of $N,$ to be able to use the facts that we just mentioned. Abusing the notation, we write $(\mathcal{P}_N)_N=(p_{1,N},\ldots,p_{\ell,N})_N$.




Next we define the subclass of variable polynomials that we will deal with.

\begin{definition}\label{D:super_nice}
For $\ell\in \mathbb{N}$ let $(\mathcal{P}_N)_N=(p_{1,N},\ldots,p_{\ell,N})_N$ be a sequence of $\ell$-tuples of real polynomials with bounded coefficients. We say that $(\mathcal{P}_N)_N$ is \emph{super nice} if, for every (large enough) $N\in \mathbb{N}$:
\begin{itemize}
    \item[(i)]  the polynomials $p_{i,N}$ and, for all $i\neq j,$ $p_{i,N}-p_{j,N}$ are non-constant and their degrees are independent of $N$;





    \medskip

    \item[(ii)] after performing, if needed, (finitely many) vdC-operations to $(\mathcal{P}_N)_N$ to obtain only polynomials of degree $1,$ say $k\equiv k((\mathcal{P}_N)_N)$ many, the leading coefficients (for large enough $h_i$'s--from the vdC-operations) have the $R_k$-property; and

    \medskip

    \item[(ii)$'$] if $\deg((p_{i_0,N})_N)=\deg((\mathcal{P}_N)_N),$ then (ii) holds for the polynomial sequence $(\mathcal{P}'_N)_N$ $:=(p_{1,N}-p_{i_0,N},\ldots,p_{i_0-1,N}-p_{i_0,N},-p_{i_0,N},p_{i_0+1,N}-p_{i_0,N},\ldots,p_{\ell,N}-p_{i_0,N})_N.$
\end{itemize}
\end{definition}


\begin{remark}\label{R:super}
$(1)$ It is not clear to us whether $(ii)$ implies $(ii)'$.

Consider for example the sequence of polynomials $(\mathcal{P}_N)_N=(p_{1,N},p_{2,N})_N,$ where $p_{1,N}(n)$ $=-a_N n^2-b_N n$ and $p_{2,N}(n)=(a_N-b_N)n.$  After performing the vdC-operation twice we get the triple $\{-2a_N(h'+h)n,$  $-2a_N h n, -2a_N h' n\},$ while for the sequence $(\mathcal{P}_N')_N=(-p_{1,N},p_{2,N}-p_{1,N})_N,$ after a single use of the vdC-operation we get $\{(a_N-b_N)n, 2a_Nh n,$ $((2h+1)a_N-b_N)n\}$. So, in the second case we have to impose assumptions on both $(a_N)_N,$ $(b_N)_N,$ while in the first one only on $(a_N)_N.$

\medskip

$(2)$ The degree and type of every super nice sequence, together with the integer $k$ in $(ii)$ (and, analogously, in $(ii)'$ as well), are independent of $N.$

\medskip

$(3)$ Every family of essentially distinct polynomials that
does not depend on N is super nice.

Indeed, since $(i)$ is immediate, we are showing $(ii)$ ($(ii)'$ follows by the same argument). As it was mentioned before, the vdC-operation preserves the essential distinctness property, hence, all the $k$ linear polynomial will have distinct leading coefficients, which, as they are independent of $N,$ will  have the $\mathcal{R}_k$-property.

\medskip

$(4)$ The set of super nice variable polynomial sequences is non-empty. Actually, the $\ell$-tuple $(p_{1,N},\ldots,p_{\ell,N})_N,$ where $p_{i,N}(n)=n^i/N^a,$ $1\leq i\leq \ell,$ $N,n\in \mathbb{N},$ and $0<a<1,$ from Example~\ref{Ex:2}, is super nice (see Lemma~\ref{L:are_super_nice} below for a more general statement).

Indeed, as the variable part of the coefficients of the polynomials, after applying vdC-operations, is the same for all terms (and equal to $1/N^a$), at each step  we have that the ratios of the coefficients are independent of $N$, hence we have all the required properties.


\medskip

$(5)$  
Even though the number $k$ of degree $1$ terms (that appears in $(ii)$ and $(ii)'$) 
is not a priori known, when we have a single variable polynomial sequence \[p_N(n)=a_{d,N}n^d+\ldots+a_{1,N}n+a_{0,N},\] where $(a_{d,N})_N$ has the $R_1$-property and all $(a_{i,N})_N,$ $0\leq i\leq d,$ are bounded, we have that for all $\ell\in \mathbb{N},$ $(\mathcal{P}_N)_N=(p_N,2p_N,\ldots,\ell p_N)_N$ is super nice.

It suffices to show only (ii).  We start with $(\ell p_N(n),\ldots,$ $p_N(n))$ and use the vdC-operation which leads to differences of polynomials.\footnote{ Notice that, for every $h\in \mathbb{N},$  $\Delta_1(p(n);h):=p(n+h)-p(n)$  reduces the degree of $p$ by $1$.} Precomposing with $-p_N(n)$ we get the family of polynomials \[(\ell-1)p_{N}(n+h_1)+p_{N}(n+h_1)-p_N(n),\ldots,p_N(n+h_1)-p_N(n),(\ell-1)p_N(n),\ldots,p_N(n).\]
In the next iteration of the vdC-operation we precompose with $-p_N(n+h_1)+p_N(n)$, and then $-(p_N(n+h_1+h_2)-p_N(n+h_2))+(p_N(n+h_1)-p_N(n))$ (i.e., polynomials of minimum degree at each step). We will keep track of the leading coefficients of polynomials of maximum degree at each step;\footnote{ Here, as we only have distinct non-zero multiples of the same polynomial it is not hard to do so; for more general coefficient tracking methods see \cite{DFMKS,DKS2}.} we have the following cases in this procedure:

$\bullet$ The polynomial that is chosen according to Lemma~\ref{L:type} has degree strictly less than the one of the polynomial of maximum degree (e.g., this happens in the second iteration of the vdC-operation). In this case the leading coefficient of the latter polynomial doesn't change.

$\bullet$ The polynomial that is chosen according to Lemma~\ref{L:type}, say $q_N,$ has degree, say $D,$ equal to the one of the polynomial of maximum degree (hence all the polynomials have the same degree $D$--this is the case in the first application of the vdC-operation). Here, because of the nature of the (essentially distinct) iterates, the leading coefficients will be multiples of the leading coefficient of $q_N.$ The scheme will continue by picking for the next step the polynomial $q_N(n+h)-q_N(n)$ (for the corresponding shift $h\in\mathbb{N}$ with leading coefficient $D$ times the leading coefficient of $q_N$) which is of minimum degree.

Continuing the procedure, we eventually arrive at, say $k$ many, degree $1$ iterates with distinct leading coefficients (because of the essential distinctness property), which are all multiples of $d!\cdot a_{d,N}$ (i.e., the coefficient of $p_N^{(d-1)}(n)$). As all the iterated ratios of these coefficients are independent of $a_{d,N}$ and non-zero, we get that they satisfy the $R_k$-property.

\medskip

$(6)$ If $(\mathcal{P}_N)_N$ is super nice, then $(\mathcal{P}'_N)_N$ is super nice too.

Looking at Property $(i)$ for $(\mathcal{P}_N)_N,$ we have that each polynomial (sequence) in $(\mathcal{P}'_N)_N$ is non-constant and has degree independent of $N,$ equal to $\deg(p_{i_0,N}-p_{1,N}).$ If we let \[q_{i,N}:= \begin{cases}
      p_{i,N}-p_{i_0,N} & i\neq i_0 \\
      -p_{i_0,N} & i=i_0
   \end{cases},\;\text{then we have}\;
 q_{i,N}-q_{j,N}=\begin{cases}
      p_{i,N}-p_{j,N} & i,j\neq i_0 \\
      p_{i,N} & j=i_0 \\
      -p_{j,N} & i=i_0
   \end{cases},
\] so (i) follows for $(\mathcal{P}'_N)_N$ as well. $(ii)$ and $(ii)'$ follow by the fact that $((\mathcal{P}')'_N)_N=(\mathcal{P}_N)_N.$

\medskip

$(7)$ Property $(i)$ is invariant under the vdC-operation.\footnote{ So, for sequences $(\mathcal{P}_N)_N$ with degree $\geq 2,$ the vdC-operation preserves the super niceness property.}

Indeed, if $p_{i_0,N}$ is the polynomial guaranteed by Lemma~\ref{L:type}, then we have the iterates:
$p_{1,N}(n+h)-p_{i_0,N}(n),\ldots, p_{\ell,N}(n+h)-p_{i_0,N}(n),$ and
$p_{1,N}(n)-p_{i_0,N}(n),\ldots,p_{i_0-1,N}(n)-p_{i_0,N}(n),p_{i_0+1,N}(n)-p_{i_0,N}(n),\ldots,p_{\ell,N}(n)-p_{i_0,N}(n).$

The degrees of these polynomials satisfy
\[\deg(p_{i,N}(n+h)-p_{i_0,N}(n))= \begin{cases}
      \deg(p_{i,N}(n)-p_{i_0,N}(n)) & i\neq i_0 \\
      \deg(p_{i_0,N})-1 & i=i_0
   \end{cases}.\footnote{ Notice that the vdC-operation will remove the $p_{i_0,N}(n+h)-p_{i_0,N}(n)$ iterate in case $\deg(p_{i_0,N})=1.$}\] 
   For the pairwise differences part, for $i\neq j,$ we have
   \[p_{i,N}(n+h)-p_{i_0,N}(n)-(p_{j,N}(n+h)-p_{i_0,N}(n))=p_{i,N}(n+h)-p_{j,N}(n+h),\]
   \[p_{i,N}(n)-p_{i_0,N}(n)-(p_{j,N}(n)-p_{i_0,N}(n))=p_{i,N}(n)-p_{j,N}(n),\] and, finally,
   \[p_{i,N}(n+h)-p_{i_0,N}(n)-(p_{j,N}(n)-p_{i_0,N}(n))=
      p_{i,N}(n+h)-p_{j,N}(n),\] so, everything follows by Property (i) for $(\mathcal{P}_N)_N.$\footnote{ Recall here that if, in the case where $i=j,$ it happens $\deg(p_{i,N})=1,$ then $\deg(p_{i_0,N})=1$ (as a non-constant polynomial of minimum degree in $(\mathcal{P}_N)_N$), so the vdC-operation will group the terms $p_{i,N}(n+h)-p_{i_0,N}(n)$ and $p_{i,N}(n)-p_{i_0,N}(n)$ together, being of degree $1$ with bounded difference.}
\end{remark}

Notice that Remark~\ref{R:super} (5) implies that Theorem~\ref{T:mc_new}, via Theorem~\ref{T:mc}, holds for a larger class of variable polynomial sequences; even with coefficients that oscillate.

A real-valued function $g$ which is continuously differentiable on $[c,\infty),$ where $c\geq 0,$ is called \emph{Fej\'{e}r} if the following hold:

$\bullet$ $g'(x)$ tends monotonically to $0$ as $x\to\infty;$ and

$\bullet$ $\lim_{x\to\infty}x|g'(x)|=\infty.$\footnote{ For a study of averages with general sublinear iterates one is referred to \cite{DKS}, and to \cite{BK} and \cite{K2} for more general functions, e.g. tempered functions.}

Any such function is eventually monotonic and satisfies the growth conditions $\log x \prec g(x) \prec x,$ hence $(1/g(N))_N$ has the $R_1$-property. So, modulo the goodness property, Theorem~\ref{T:mc} will also hold for polynomial sequences of the form:
\[\left(\frac{\sqrt{5}}{g_1(N)}n^3 +p_{1,N}(n)\right)_N, \;\text{or}\;\left(\frac{7}{g_2(N)}n^{17}+p_{2,N}(n)\right)_N,\] where $g_1(x)=x^{1/2}(2+\cos\sqrt{\log x}),$ $g_2(x)=x^{1/40}(1/10+\sin\log x)^3,$ and $p_{1,N},$ $p_{2,N}$ are polynomials of degrees less than $3$ and $17$ respectively with bounded coefficients. This is a non-trivial generalization because while the functions $g_1$ and $g_2$ are Fej\'{e}r, in view of the fact that they oscillate, do not belong to $\mathcal{SLE}.$


The following result shows that the nilfactor $\mathcal{Z}$ is characteristic for a super nice collection of polynomial sequences $(p_{1,N},\ldots,p_{\ell,N})_N.$

\begin{proposition}\label{L:characteristic_factor}
For $\ell\in \mathbb{N}$ let $(p_{1,N},\ldots,p_{\ell,N})_N$ be a super nice sequence of polynomials, $(X,\mathcal{B},\mu,T)$ a system, and suppose that at least one of the functions $f_1,\ldots,f_\ell\in L^\infty(\mu)$ is orthogonal to the nilfactor $\mathcal{Z}.$ Then, 
we have
\begin{equation}\label{E:characteristic_factor}
\lim_{N\to\infty}\norm{\frac{1}{N}\sum_{n=1}^N 
T^{[p_{1,N}(n)]}f_1\cdot\ldots\cdot T^{[p_{\ell,N}(n)]}f_\ell}_2=0.
\end{equation}
\end{proposition}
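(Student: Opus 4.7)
The plan is to control the $L^2$-norm in~\eqref{E:characteristic_factor} by a seminorm of any chosen $f_j$. Combined with Theorem~\ref{T:HK} and the construction $\mathcal{Z}=\bigvee_k\mathcal{Z}_k$, the hypothesis $\mathbb{E}(f_j|\mathcal{Z})=0$ is equivalent to $\nnorm{f_j}_s=0$ for every $s\in\mathbb{N}$, so~\eqref{E:characteristic_factor} will follow from proving, for each $1\le j\le \ell$, a bound of the form
\[
\limsup_{N\to\infty}\norm{\frac{1}{N}\sum_{n=1}^{N}\prod_{i=1}^{\ell}T^{[p_{i,N}(n)]}f_i}_{2}\ll \nnorm{f_j}_{s_j},
\]
where $s_j\in\mathbb{N}$ depends only on the type of $(\mathcal{P}_N)_N$ and on $j$.

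I would establish this bound by induction on the (lexicographic) type of the family, following a variable-coefficient adaptation of the classical PET procedure as in~\cite[Proposition~5.3]{F2}. In the inductive step, I apply Lemma~\ref{L:vdc} and precompose the resulting inner products with $T^{-[p_{i_0,N}(n)]}$ for a minimum-degree polynomial sequence $(p_{i_0,N})_N$ furnished by Lemma~\ref{L:type}; after grouping bounded pieces via Cauchy--Schwarz, the inner average becomes one governed by the vdC-reduced family $\mathcal{P}_N(p_{i_0,N},h)$, which has strictly smaller type and is still super nice by Remark~\ref{R:super}(7). After finitely many iterations, the family consists only of linear polynomials whose leading coefficients carry the $R_k$-property by Definition~\ref{D:super_nice}$(ii)$, so Proposition~\ref{P:Linear} yields a bound by a seminorm of $T^h f_j\cdot\bar{f_j}$ (or of $T^h f_j$), which lifts to $\nnorm{f_j}_{s_j}$ once one averages over the auxiliary shift parameters $h$ via Hölder's inequality and the inductive definition of $\nnorm{\cdot}_s$.

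The above scheme runs directly when $\deg((p_{j,N})_N)=\deg((\mathcal{P}_N)_N)$, because then the iterate of $f_j$ survives each vdC-step. When $\deg((p_{j,N})_N)<\deg((\mathcal{P}_N)_N)$, however, applying PET straight away to $(\mathcal{P}_N)_N$ would absorb $f_j$ into the test function the first time the selected minimum-degree polynomial is linear (this is why in Remark~\ref{R:super}(1) the vdC-reductions of $(\mathcal{P}_N)_N$ and of $(\mathcal{P}'_N)_N$ yield different linear families, only the latter involving coefficients of the sequence whose polynomial is not of maximum degree). To circumvent this, I first fix $i_0$ with $\deg((p_{i_0,N})_N)=\deg((\mathcal{P}_N)_N)$ and rewrite each integrand defining $\norm{A_N}_2$ by precomposing with $T^{-[p_{i_0,N}(n)]}$: up to bounded error terms, the resulting family of iterates is exactly $(\mathcal{P}'_N)_N$ from Definition~\ref{D:super_nice}$(ii)'$, now paired with $f_0,f_{i_0}$, and the $f_i$ ($i\ne i_0$). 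The scheme from the previous paragraph then runs on the super nice family $(\mathcal{P}'_N)_N$, with the $R_k$-property at the base case supplied by property~$(ii)'$.

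The main obstacle, common to both cases, is tracking the variable coefficients through the entire PET recursion. After each vdC-step the leading coefficients acquire additional factors from the auxiliary shifts, and in order to reach the hypotheses of Proposition~\ref{P:Linear} one must repeatedly apply change-of-variables arguments in the spirit of Lemma~\ref{L:comparison}, replacing averages over $n$ with rescaled indices $[a_N n]$ by averages with plain index $n$. The super-niceness conditions~$(ii)$ and~$(ii)'$ are engineered precisely so that, after PET terminates on $(\mathcal{P}_N)_N$ or on $(\mathcal{P}'_N)_N$, the resulting linear family satisfies the $R_k$-property demanded by Proposition~\ref{P:Linear}, closing the induction and identifying $\mathcal{Z}$ as the characteristic factor.
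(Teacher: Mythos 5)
Your proposal follows essentially the same route as the paper's proof: you correctly reduce via the precomposition trick to the case where the distinguished function's polynomial has maximum degree (using property $(ii)'$ and the super niceness of $(\mathcal{P}'_N)_N$), run PET induction on the lexicographic type with Lemma~\ref{L:vdc} plus precomposition by the minimum-degree polynomial from Lemma~\ref{L:type}, invoke Remark~\ref{R:super}(7) to preserve super niceness along the way, and close at the linear base case with Proposition~\ref{P:Linear}, whose $R_k$-property hypothesis is exactly what Definition~\ref{D:super_nice}$(ii)$/$(ii)'$ supplies. The only minor inaccuracy is your final paragraph's suggestion that Lemma~\ref{L:comparison}-type rescaling is applied repeatedly during the PET recursion itself: in the paper's argument (and in any correct implementation) that change-of-variables is confined to the interior of Proposition~\ref{P:Linear}, once the iterates are already linear; the PET steps themselves only use vdC and precomposition.
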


\begin{proof}
We assume without loss of generality that $f_1$ is orthogonal to $\mathcal{Z}.$ As in \cite[Lemma~4.7]{F2}, 
to show \eqref{E:characteristic_factor}, it suffices to show:
\begin{equation}\label{E:Char1}
\lim_{N\to\infty}\sup_{\norm{f_0}_\infty,\norm{f_2}_\infty,\ldots,\norm{f_\ell}_\infty\leq 1}\frac{1}{N}\sum_{n=1}^N \left| \int f_0\cdot T^{[p_{1,N}(n)]}f_1\cdot\ldots\cdot T^{[p_{\ell,N}(n)]}f_\ell\;d\mu \right|=0.
\end{equation}
We claim next that we can further assume that $\deg(p_{1,N})=\deg(\mathcal{P}_N).$ If this is not the case and $\deg(p_{1,N})<\deg(p_{i_0,N})=\deg(\mathcal{P}_N),$ then, precomposing with $T^{-[p_{i_0,N}(n)]},$ \eqref{E:Char1} becomes
\begin{equation*}\label{E:Char2}
	\begin{split}
	&\quad \lim_{N\to\infty}\sup_{\norm{f_0}_\infty,\norm{f_2}_\infty,\ldots,\norm{f_\ell}_\infty\leq 1}\frac{1}{N}\sum_{n=1}^N \bigg| \int f_{i_0}\cdot T^{[-p_{i_0,N}(n)]}f_{0,n,N}
	\\
&  \quad\quad\quad\quad\quad\quad\quad\quad\quad\quad\quad\quad\quad\quad\quad\quad\quad \cdot \prod_{1\leq i\neq i_0\leq \ell} T^{[p_{i,N}(n)-p_{i_0,N}(n)]}f_{i,n,N} \;d\mu\bigg|=0,
	\end{split}
    \end{equation*}
where $f_{i,n,N}=T^{e_i(n,N)}f_i$ for some $e_i(n,N)\in \{0,1\}.$ It suffices to show that for all $e\in \{0,1\}$
\begin{equation*}\label{E:Char3}
	\begin{split}
	& \lim_{N\to\infty}\sup_{\norm{f_0}_\infty,\norm{f_2}_\infty,\ldots,\norm{f_\ell}_\infty\leq 1}\frac{1}{N}\sum_{n=1}^N \bigg| \int f_{i_0}\cdot T^{[-p_{i_0,N}(n)]}f_0\cdot T^{[p_{1,N}(n)-p_{i_0,N}(n)]}(T^e f_{1})
	\\
&  \quad\quad\quad\quad\quad\quad\quad\quad\quad\quad\quad\quad\quad\quad\quad\quad \cdot \prod_{2\leq i\neq i_0\leq \ell} T^{[p_{i,N}(n)-p_{i_0,N}(n)]}f_{i} \;d\mu\bigg|=0.
	\end{split}
    \end{equation*}
The claim follows by Remark~\ref{R:super} (6), as the family $(p_{i_0,N}-p_{1,N},\ldots,p_{i_0,N}-p_{i_0-1,N},$ $p_{i_0,N},p_{i_0,N}-p_{i_0+1,N},\ldots,p_{i_0,N}-p_{\ell,N})_N$ is super nice with degree$=\deg(p_{i_0,N}-p_{1,N})$.

If all the $p_{i,N}$'s are of degree $1,$ the result follows from Proposition~\ref{P:Linear}. For $\deg(p_{1,N})\geq 2,$ we use induction on the type of the polynomial family of $\ell$-tuple of sequences.

For every $N\in \mathbb{N}$ we choose functions $f_{i,N}$ with $\norm{f_{i,N}}_\infty\leq 1$ for $i\in \{0,2,\ldots,\ell\},$ so that the average in \eqref{E:Char1} is $1/N$ close to  $\sup_{\norm{f_0}_\infty,\norm{f_2}_\infty,\ldots,\norm{f_\ell}_\infty\leq 1}.$ If $S=T\times T,$ $F_1=f_1\otimes\overline{f}_1,$ $F_{i,N}=f_{i,N}\otimes\overline{f}_{i,N},$ $ i=0,2,\ldots,\ell,$ and $\tilde{\mu}=\mu\times\mu,$  using Cauchy-Schwarz, \eqref{E:Char1} follows if
\begin{equation}\label{E:Char6}
\lim_{N\to\infty}\norm{\frac{1}{N}\sum_{n=1}^N  S^{[p_{1,N}(n)]}F_1\cdot\prod_{i=2}^\ell S^{[p_{i,N}(n)]}F_{i,N}}_{L^2(\tilde{\mu})}=0.
\end{equation}
By Lemma~\ref{L:vdc}, \eqref{E:Char6} follows if, for large enough $h,$ for $N\to\infty$ we have
\begin{equation}\label{E:SR}
	\frac{1}{N}\sum_{n=1}^N \bigg| \int S^{[p_{1,N}(n+h)]}F_1\cdot\prod_{i=2}^\ell S^{[p_{i,N}(n+h)]}F_{i,N} \cdot S^{[p_{1,N}(n)]}\overline{F}_1\cdot\prod_{i=2}^\ell S^{[p_{i,N}(n)]}\overline{F}_{i,N} \;d\tilde{\mu}\bigg|
    \end{equation} goes to $0.$
Picking $p_{j_0,N}$ as guaranteed by Lemma~\ref{L:type} (the degrees of the $p_{i,N}$'s are fixed, so the choice of $j_0$ is independent of $N$), we precompose with the term $S^{-[p_{j_0,N}(n)]}$ in the integrand of \eqref{E:SR}
(notice that some error terms $e_i(n,h,N),$ $\tilde{e}_i(n,N)\in \{0,1\}$ will appear). Next, we group the degree $1$ iterates. More specifically, if $\deg(p_{i,N})=1$ for some $2\leq i\leq \ell$, then  $[p_{i,N}(n+h)]=[p_{i,N}(n)]+[c_{i,N}h]+e'_{i}(n,h,N),$ for some error terms in $\{0,1\}.$ Hence, we have
\begin{equation*}
	\begin{split}
	&\quad S^{[p_{i,N}(n+h)-p_{j_0,N}(n)]+e_i(n,h,N)}F_{i,N}\cdot S^{[p_{i,N}(n)-p_{j_0,N}(n)]+\tilde{e}_i(n,h,N)}\overline{F}_{i,N}
\\& =S^{[p_{i,N}(n)-p_{j_0,N}(n)]}(S^{[c_{i,N}h]+e_i(n,h,N)+e'_i(n,h,N)}F_{i,N}\cdot S^{\tilde{e}_{i}(n,h,N)}\overline{F}_{i,N}).
	\end{split}
    \end{equation*}
We treat this product as one iterate. After this grouping, assuming that $r$ many terms remain, it suffices to show, for large $h,$ and every choice of $e\in \{0,1\}$ that
\begin{equation}\label{E:ind2}
	\begin{split}
	&\quad\lim_{N\to\infty}\sup_{\norm{F_0}_\infty,\norm{F_2}_\infty,\ldots,\norm{F_{r}}_\infty\leq 1}\bigg|\frac{1}{N}\sum_{n=1}^N \int F_0\cdot S^{[p_{1,h,N}(n)]}(S^{e}F_1)
	\\
	& \quad\quad\quad\quad\quad\quad\quad\quad\quad\quad\quad \quad\quad\quad\quad\quad\quad \quad\quad\quad\cdot \prod_{i=2}^{r}S^{[p_{i,h,N}(n)]}F_i\;d\tilde{\mu}\bigg|=0,
	\end{split}
    \end{equation}
where the polynomial sequences $(p_{i,h,N})_N$ form  $(\mathcal{P}_N(p_{j_0,N},h))_N,$ a polynomial family with $p_{1,h,N}(n)=p_{1,N}(n+h)-p_{j_0,N}(n)$ and $\deg(p_{1,h,N})=\deg(\mathcal{P}_N(p_{j_0,N},h)).$

The
left-hand side of \eqref{E:ind2} is as that of \eqref{E:Char1}, with the polynomial family of the former having type strictly less than the latter (from Lemma~\ref{L:type}).  Using Remark~\ref{R:super} (7), we are done by induction.
\end{proof}

For the expression of Theorem~\ref{T:mc}, writing $i[p_N(n)]=[ip_N(n)]+e_{i,N}(n),$ for some  $e_{i,N}(n)\in \{-i,\ldots,-1,0\},$ using Proposition~\ref{L:characteristic_factor}, we get the following result:

\begin{proposition}\label{L:characteristic_single}
For $\ell\in \mathbb{N}$ let $(p_{N},2 p_N,\ldots,\ell p_{N})_N$ be a super nice sequence of polynomials, $(X,\mathcal{B},\mu,T)$ a system, and suppose that at least one of the functions $f_1,\ldots,f_\ell\in L^\infty(\mu)$ is orthogonal to the nilfactor $\mathcal{Z}.$ Then, 
we have
\begin{equation*}\label{E:characteristic_single}
\lim_{N\to\infty}\norm{\frac{1}{N}\sum_{n=1}^N 
T^{[p_{N}(n)]}f_1\cdot T^{2[p_{N}(n)]}f_2\cdot\ldots\cdot T^{\ell[p_{N}(n)]}f_\ell}_2=0.
\end{equation*}
\end{proposition}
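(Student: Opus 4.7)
The plan is to deduce this proposition directly from Proposition~\ref{L:characteristic_factor} applied to the super-nice family $(p_N, 2p_N, \ldots, \ell p_N)_N$, exactly as hinted in the text. For each $1 \leq i \leq \ell$ and $n, N \in \mathbb{N}$, the elementary identity $[ix]=i[x]+[i\{x\}]$ (with $\{x\}\in[0,1)$ the fractional part) gives $i[p_N(n)]=[ip_N(n)]+e_{i,N}(n)$, where $e_{i,N}(n):=-[i\{p_N(n)\}]\in\{-(i-1),\ldots,0\}$. The vector $\vec{e}_N(n):=(e_{1,N}(n),\ldots,e_{\ell,N}(n))$ thus ranges over a finite set $E\subseteq\mathbb{Z}^\ell$ whose cardinality is independent of $N$. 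Partitioning $\{1,\ldots,N\}$ into the level sets $A_{\vec{e},N}:=\{1\leq n\leq N:\vec{e}_N(n)=\vec{e}\}$ and setting
\[B_{N,\vec{e}}:=\frac{1}{N}\sum_{n\in A_{\vec{e},N}}\prod_{i=1}^\ell T^{[ip_N(n)]}(T^{e_i}f_i),\]
we obtain the finite decomposition $\frac{1}{N}\sum_{n=1}^N\prod_{i=1}^\ell T^{i[p_N(n)]}f_i=\sum_{\vec{e}\in E}B_{N,\vec{e}}$. By the triangle inequality and the finiteness of $E$, it suffices to show $\norm{B_{N,\vec{e}}}_2\to 0$ for each fixed $\vec{e}\in E$.

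Fix $\vec{e}\in E$ and set $g_i:=T^{e_i}f_i$. Since $\mathcal{Z}$ is $T$-invariant, the hypothesis $\mathbb{E}(f_1|\mathcal{Z})=0$ yields $\mathbb{E}(g_1|\mathcal{Z})=0$, so $g_1\perp\mathcal{Z}$. Set $M:=\prod_{i=1}^\ell\norm{f_i}_\infty$, so that $\norm{B_{N,\vec{e}}}_\infty\leq M$ uniformly in $N$. The key computation is then to expand the square using $\overline{B_{N,\vec{e}}}$ as an $L^\infty$ test function:
\[\norm{B_{N,\vec{e}}}_2^2=\frac{1}{N}\sum_{n\in A_{\vec{e},N}}\int\overline{B_{N,\vec{e}}}\prod_{i=1}^\ell T^{[ip_N(n)]}g_i\,d\mu\leq M\cdot\sup_{\norm{f_0}_\infty\leq 1}\frac{1}{N}\sum_{n=1}^N\left|\int f_0\prod_{i=1}^\ell T^{[ip_N(n)]}g_i\,d\mu\right|,\]
where the last inequality uses $1_{A_{\vec{e},N}}(n)\leq 1$ to drop the restriction on the range of summation, and $\norm{\overline{B_{N,\vec{e}}}}_\infty\leq M$ to pass to the sup over $L^\infty$ test functions of norm at most one.

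Finally, Proposition~\ref{L:characteristic_factor} applies to the super-nice family $(p_N,2p_N,\ldots,\ell p_N)_N$ with the functions $g_1,\ldots,g_\ell$. The only subtle point, which is the main obstacle to be clear about, is that the formal statement \eqref{E:characteristic_factor} records only an $L^2$-convergence conclusion, whereas here we need the stronger sup-form bound \eqref{E:Char1}; but this is exactly the intermediate statement that the proof of Proposition~\ref{L:characteristic_factor} reduces to and establishes, so it is available at no extra cost. Since $g_1\perp\mathcal{Z}$, the supremum on the right-hand side above tends to zero, whence $\norm{B_{N,\vec{e}}}_2\to 0$ for every $\vec{e}\in E$. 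Summing over the finite set $E$ completes the proof.
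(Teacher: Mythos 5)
Your proof is correct and takes essentially the same approach as the paper, which gives a one-line argument: write $i[p_N(n)]=[ip_N(n)]+e_{i,N}(n)$ with bounded error terms and invoke Proposition~\ref{L:characteristic_factor}. Your fleshed-out version correctly identifies the one technical point the paper leaves implicit — namely that, because the decomposition by the error vector $\vec{e}$ restricts the sum to the level sets $A_{\vec{e},N}$, one needs the sup-form estimate \eqref{E:Char1} (which the proof of Proposition~\ref{L:characteristic_factor} actually establishes) rather than merely the $L^2$-convergence conclusion in the statement.
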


\section{Equidistribution}\label{S6:equi}

In order to prove our main equidistribution result (Theorem~\ref{T:T1}), we start with some definitions and facts, following  \cite{F1} (see \cite[Subsubsection~2.3.2]{F1} for more details).

If $G$ is a nilpotent group, then a sequence
$g:\mathbb{N}\to G$ of the form $g(n) = b_1^{p_1(n)} \cdots b_k^{p_k(n)},$ where $b_i \in G,$ and $p_i$ are integer polynomials, is called a \emph{polynomial sequence in $G$}. If the maximum degree
of the polynomials $p_i$'s is at most $d$ we say that the \emph{degree} of $g(n)$ is at most $d.$

Given a nilmanifold $X = G/\Gamma$ the \emph{horizontal torus} is defined to be the compact abelian group $Z = G/([G,G]\Gamma)$. If $X$ is connected, then $Z$ is isomorphic to some finite dimensional torus $ \mathbb{T}^s $.
A \emph{horizontal character}
$ \chi: G \to \mathbb{C}$ is a continuous homomorphism that satisfies $ \chi(g\gamma) = \chi(g) $ for every $ \gamma \in \Gamma $
and can be thought of as a character of $\mathbb{T}^s$, in which case there exists a unique $\kappa \in \mathbb{Z}^s$ such that $\chi(t\mathbb{Z}^s) = e(\kappa \cdot t)$, where ``$\cdot$'' denotes the inner product operation, and $e(x):=e^{2\pi i x}$.



Let $p: \mathbb{Z} \to \mathbb{R}$ be a polynomial sequence of degree $d$ of the form $p(n)=\sum_{i=0}^d a_i n^i$, where $a_i\in \mathbb{R},$ $1\leq i\leq d.$ We define the \emph{smoothness norm} by
\begin{equation}\label{E:norm}
    \norm{e(p(n))}_{C^\infty[N]}:= \max_{1\leq i \leq d} (N^i \norm{a_i}),
\end{equation}
where $\norm{\cdot}$ denotes the distance to the closest integer, i.e., $\norm{x}:=d(x,\mathbb{Z})$.

Given $ N\in \mathbb{N} $, a finite sequence $ (g(n)\Gamma)_{1\leq n \leq N} $ is said to be  $ \delta $-\emph{equidistributed in $X$}, if
\[  \left| \frac{1}{N}\sum_{n=1}^N F(g(n)\Gamma) -\int_X F d m_X \right| \leq \delta \norm{F}_{\lip(X)}    \]
for every Lipschitz function $F:X\to \mathbb{C},$ where
\[ \norm{F}_{\lip(X)}= \norm{F}_\infty + \sup_{x,y\in X, x\neq y} \frac{|F(x)-F(y)|}{d_X(x,y)} \]
for some appropriate metric $d_X$.





At this point we quote \cite[Theorem~2.9]{F1}, a direct consequence of \cite[Theorem~2.9]{GT}:

\begin{theorem}[Green \& Tao, \cite{GT}] \label{T:t1}
Let $X=G/\Gamma$ be a nilmanifold with $G$ connected and simply connected, and $d\in \mathbb{N}$. Then for every small enough $\delta >0$ there exist a positive constant $M \equiv M(X,d,\delta)$ with the following property: For every $N \in \mathbb{N}$, if $g : \mathbb{Z} \to G$ is a polynomial sequence of degree at most $d$ such that the finite
sequence $(g(n)\Gamma)_{1\leq n\leq N}$ is not $\delta$-equidistributed, then for some non-trivial horizontal character $\chi$ with $ \norm{\chi} \leq M$ we have
\begin{equation*}\label{E:e31}
    \norm{\chi(g(n))}_{C^\infty[N]}\leq M
\end{equation*}
($\chi$ here is thought of as a character of the horizontal torus $Z = \mathbb{T}^s$ and $g(n)$ as a polynomial sequence in $\mathbb{T}^s$).
\end{theorem}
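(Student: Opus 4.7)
The proof follows the Green–Tao strategy, proceeding by induction on the nilpotency step $s$ of $G$, combined with a reduction to quantitative Weyl equidistribution via Mal'cev coordinates. The plan is to fix a Mal'cev basis $(X_1,\ldots,X_{\dim G})$ adapted to the lower central series of $G$, which identifies $G$ with $\mathbb{R}^{\dim G}$ so that a polynomial sequence $g(n)=b_1^{p_1(n)}\cdots b_k^{p_k(n)}$ of degree at most $d$ becomes, in coordinates, a vector-valued polynomial of degree at most $d$ in $n$. This reduces the problem to a quantitative statement about such polynomial sequences modulo the lattice $\Gamma$.

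For the base case, when $G$ is abelian (so $X=\mathbb{T}^s$), failure of $\delta$-equidistribution implies, by Fourier expansion of the Lipschitz test function $F$, the existence of a non-trivial character $\chi$ with $\|\chi\|\lesssim \delta^{-O(1)}$ such that $\bigl|\frac{1}{N}\sum_{n\le N} e(\chi\cdot g(n))\bigr|\gg \delta^{O(1)}$. Applying the quantitative Weyl inequality (iterated van der Corput) to the polynomial $\chi\cdot g(n)=\sum_{i=0}^d a_i n^i$ then forces $\|a_i\|\ll N^{-i}$ for each $1\le i\le d$ up to a constant depending on $\delta$, which is exactly the smoothness-norm bound $\|\chi(g(n))\|_{C^\infty[N]}\le M$.

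For the inductive step, I would let $T:=G_s\Gamma/\Gamma$ be the central (vertical) subtorus and $\bar{X}:=\bar{G}/\bar{\Gamma}$ the quotient, an $(s-1)$-step nilmanifold. Given the failing Lipschitz test function $F$, decompose $F=\sum_\xi F_\xi$ into its vertical Fourier components with respect to characters $\xi$ of $T$. The $\xi=0$ piece descends to $\bar{X}$, so if it contributes significantly then the projected polynomial sequence $(\bar g(n)\bar\Gamma)$ fails equidistribution on $\bar{X}$ and the induction hypothesis produces the horizontal character. Otherwise, some non-trivial $\xi$ contributes; here the key move is a van der Corput / ``cube'' argument applied to $|F_\xi\circ g|^2$, which manufactures a polynomial sequence $(g(n+h)g(n)^{-1},g(n))$ on $G\times G$ whose relevant component lies in a nilmanifold of strictly smaller step. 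The inductive hypothesis applied for many $h$ then yields, via a pigeonholing over $h$, a horizontal character of controlled complexity witnessing non-equidistribution.

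The main obstacle is bookkeeping the quantitative loss in the induction: each application of van der Corput and each pigeonhole over the shift parameter $h$ degrades $\delta$ polynomially, and one must track that the bounds on $\|\chi\|$ and on the smoothness norm remain of the form $M(X,d,\delta)$ independent of $N$. This requires the careful factorization of polynomial sequences in $G$ in terms of a Taylor expansion $g(n)=\prod_i g_i^{\binom{n}{i}}$ (Leibman's basis), so that van der Corput differences remain polynomial sequences in $G$ of the same degree but of lower effective step, allowing the induction to close cleanly.
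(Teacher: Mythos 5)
This statement is not proved in the paper at all; it is quoted verbatim as a known result, explicitly attributed to Green--Tao (\cite[Theorem~2.9]{GT}) via Frantzikinakis (\cite[Theorem~2.9]{F1}). There is therefore no ``paper's own proof'' against which your attempt can be compared. What you have written is a bird's-eye outline of the Green--Tao argument, and while it captures the right general ingredients -- Mal'cev coordinates, reduction to quantitative Weyl in the abelian base case, vertical Fourier decomposition and a van der Corput / cube construction to lower the step -- it stops well short of a proof. A few specifics worth flagging. First, the structure of the induction in \cite{GT} is not exactly ``on the nilpotency step of $G$'': the polynomial case (their Theorem 2.9) is obtained by reducing to the \emph{linear} case (their Theorem 1.19) via lifting a degree-$d$ polynomial sequence to a linear orbit on a larger nilmanifold, and the hard induction is on the total Mal'cev dimension of the nilmanifold through a factorization theorem (every polynomial orbit factors as smooth $\times$ totally-equidistributed $\times$ rational). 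Second, the van der Corput step does not simply land you on ``a nilmanifold of strictly smaller step'' on $G\times G$; it produces a sequence on the fiber product $G\times_{G_2}G$ equipped with a carefully chosen filtration $G_\bullet$, and keeping track of the degree of the differenced sequence relative to that filtration is exactly where Leibman's Taylor expansion $g(n)=\prod_i g_i^{\binom{n}{i}}$ earns its keep. Third, your sketch of the abelian case implicitly uses a quantitative Weyl inequality whose proof is itself nontrivial and needs to be stated in a form compatible with the smoothness norm. None of this is wrong in spirit, but it is the sort of thing one writes when one knows the proof exists, not a proof one could check; Green and Tao devote tens of pages to closing precisely the quantitative gaps you dismiss as ``bookkeeping.'' Since the paper treats this as an imported black box, the appropriate move -- both for the paper and for you -- is simply to cite \cite{GT} rather than reprove it.
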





Adapting the notion of equidistribution of a sequence in a nilmanifold (recall \eqref{E:equi}) to our case, abusing the notation, we say that $(b^{a_N(n)}x)_{1\leq n\leq N},$ where $(a_N(n))_{1\leq n\leq N}$ is a variable sequence of real numbers and $X=G/\Gamma$ is a nilmanifold with $G$ connected and simply connected,
is \emph{equidistributed} in a subnilmanifold $Y$ of $X,$ if for every $F\in C(X)$ we have
\[\lim_{N\to\infty}\frac{1}{N}\sum_{n=1}^N F(b^{a_N(n)}x)=\int F\; d m_Y.\]




 In order for us to prove Theorems~\ref{T:main} and \ref{T:mc}, we prove the following equidistribution theorem, which is the main result of this section:

\begin{theorem}\label{T:T1}
Let $(p_{1,N}, \ldots, p_{\ell,N})_N$ be a good sequence of $\ell$-tuples of polynomials.
\begin{enumerate}
\item[$(i)$] If $X_i=G_i/\Gamma_i,$ $1\leq i\leq \ell,$ are nilmanifolds with $G_i$ connected and simply connected, then for every $b_i\in G_i$ and $x_i\in X_i$ the sequence \[(b_1^{p_{1,N}(n)}x_1,\ldots, b_\ell^{p_{\ell,N}(n)}x_\ell)_{1\leq n\leq N}\] is equidistributed in the nilmanifold $\overline{(b_1^sx_1)}_{s\in\mathbb{R}}\times\cdots\times \overline{(b_\ell^sx_\ell)}_{s\in\mathbb{R}}.$
\item[$(ii)$] If $X_i=G_i/\Gamma_i,$ $1\leq i\leq \ell,$ are nilmanifolds, then for every $b_i\in G_i$ and $x_i\in X_i$ the sequence \[(b_1^{[p_{1,N}(n)]}x_1,\ldots, b_\ell^{[p_{\ell,N}(n)]}x_\ell)_{1\leq n\leq N}\] is equidistributed in the nilmanifold $\overline{(b_1^nx_1)}_{n}\times\cdots\times \overline{(b_\ell^nx_\ell)}_{n}.$
\end{enumerate}
\end{theorem}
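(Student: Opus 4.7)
My plan is to follow the adaptation of \cite[Theorem~2.8]{F1} from single-sequence to multi-sequence settings. For part~(i), I first reduce to the ergodic case: by the change of base point formula (Subsection~\ref{Ss:3.2.2}), I replace each pair $(b_i, x_i)$ by a conjugate pair so that $x_i = \Gamma_i$ and $b_i$ acts ergodically on the orbit closure $Y_i := \overline{\{b_i^s \Gamma_i : s \in \mathbb{R}\}}$ (which is a sub-nilmanifold by Subsection~\ref{Ss:ud}). It then suffices to show equidistribution of $(g_N(n)\Gamma)_{1 \le n \le N}$ in $Y := Y_1 \times \cdots \times Y_\ell$, where $g_N(n) := (b_1^{p_{1,N}(n)}, \ldots, b_\ell^{p_{\ell,N}(n)})$ and $\Gamma = \Gamma_1 \times \cdots \times \Gamma_\ell$.

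The core argument for (i) is by contradiction via Theorem~\ref{T:t1}. If equidistribution fails, there exist $\delta > 0$ and $N_k \to \infty$ so that the finite orbits are not $\delta$-equidistributed in $Y$; Theorem~\ref{T:t1} then produces non-trivial horizontal characters $\chi_k$ with $\|\chi_k\| \le M$ and $\|\chi_k(g_{N_k}(n))\|_{C^\infty[N_k]} \le M$, and finiteness of such characters lets me pass to a sub-subsequence on which $\chi_k = \chi$ is constant. Splitting $\chi = (\chi_1, \ldots, \chi_\ell)$, nontriviality of $\chi$ forces some $\chi_i$ to be nontrivial, and the ergodicity of $b_i$ on $Y_i$ together with connectedness and simple connectedness of $G_i$ then forces the scalar $\alpha_i$ defined by $\chi_i(b_i^s) \equiv s\alpha_i \pmod{1}$ (via the Lie exponential) to be nonzero. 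Thus $q_N(n) := \sum_{i=1}^\ell \alpha_i p_{i,N}(n)$ is a non-trivial real linear combination of the good sequences and is itself good, giving $\frac{1}{N_k}\sum_{n=1}^{N_k} e^{i\beta q_{N_k}(n)} \to 0$ for every real $\beta \neq 0$. On the other hand, $\|q_{N_k}\|_{C^\infty[N_k]} \le M$ gives, after a compactness extraction on the rescaled coefficients $s_j^{(N_k)} := N_k^j(c_{j,N_k} - m_{j,N_k}) \in [-M, M]$, a limiting polynomial $q^*(t)$ on $[0,1]$ whose mod-$1$ pushforward of Lebesgue has all nonzero Fourier coefficients vanishing. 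Matching this constraint against the particular growth and scaling of the coefficients of the $p_{i,N}$'s (encoded in the goodness hypothesis) yields a contradiction with $(\alpha_1, \ldots, \alpha_\ell) \neq \vec{0}$, completing (i).

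For part~(ii) I use the lifting argument of Subsection~\ref{Ss:3.2.3} to embed each $X_i$ in $\tilde X_i = \tilde G_i/\tilde \Gamma_i$ with $\tilde G_i$ connected and simply connected and $b_i$ represented by $\tilde b_i$ on integer exponents. Writing $\tilde b_i^{[p_{i,N}(n)]}\tilde x_i = \tilde b_i^{p_{i,N}(n)}\cdot \tilde b_i^{-\{p_{i,N}(n)\}}\tilde x_i$, I partition $[0,1)^\ell$ into small cells and approximate the continuous factor $\tilde b_i^{-\{p_{i,N}(n)\}}$ by a constant nilrotation on each cell. The required joint equidistribution of $(\tilde b_i^{p_{i,N}(n)}\tilde x_i)_{1\le i \le \ell}$ together with the fractional-part vector $(\{p_{i,N}(n)\})_i$ in the enlarged product $\tilde X_1\times\cdots\times\tilde X_\ell\times \mathbb{T}^\ell$ follows from applying part~(i) to the enlarged tuple (which remains good, since the extra $\mathbb{T}^\ell$-coordinates are driven by the same $p_{i,N}$'s); letting the cells shrink yields (ii). The main obstacle is making the final contradiction in (i) precise: the quantitative Green--Tao witness only tells us that the coefficients of $q_{N_k}$ are very close to integers, while goodness gives the much stronger conclusion that $(\gamma q_N(n) \bmod 1)_n$ equidistributes in $\mathbb{T}$ for every real $\gamma \neq 0$. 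Reconciling these requires exploiting the continuum strength of goodness (not merely integer-$\gamma$ Weyl tests) and the specific scaling behavior of the coefficients of the $p_{i,N}$'s.
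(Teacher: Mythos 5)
Your overall strategy for part (i) — reduce the base point, assume non-equidistribution, invoke Green--Tao (Theorem~\ref{T:t1}) to produce a horizontal character, extract a non-trivial linear combination $q_N$ which is good by hypothesis, and contradict the smallness of $\norm{e(q_N(n))}_{C^\infty[N]}$ — is the same line of attack the paper takes. However, there are two genuine gaps.

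First, the reduction to the ergodic case. The change of base point formula lets you replace $(b_i,x_i)$ by a conjugate pair with $x_i=\Gamma_i$, but it does \emph{not} make $b_i$ act ergodically on the $\mathbb{R}$-orbit closure $Y_i=\overline{\{b_i^s\Gamma_i\}}_{s\in\mathbb{R}}$. The flow $(b_i^s)_{s\in\mathbb{R}}$ equidistributes in $Y_i$, but the discrete element $b_i$ need not; consider $G=\mathbb{R}$, $\Gamma=\mathbb{Z}$, $b=1$, where $Y=\mathbb{T}$ while $(b^n\Gamma)_n$ is constant. Ergodicity of $b_i$ on $Y_i$ is exactly what is needed to conclude that the horizontal coordinates of $b_i$ are, together with $1$, rationally independent, and hence that the resulting coefficient $\alpha_i$ in the linear combination is non-zero. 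The paper avoids this problem by invoking Lemma~\ref{L:5.2}: there exists a single $s_0\neq 0$ so that each $b_i^{s_0}$ acts ergodically on $Y_i$, and one then applies the ergodic-case lemma to the elements $b_i^{s_0}$ and the rescaled polynomials $p_{i,N}/s_0$, which remain good. Without this step, your assertion that each $\alpha_i$ is non-zero when $\chi_i$ is non-trivial is unjustified.

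Second, the final contradiction. The paper's argument is short: it is a known fact (quoted from \cite{F4}) that for a good polynomial sequence $(q_N)_N$ with $q_N(n)=\sum_j c_{j,N}n^j$, one has $\lim_{N\to\infty}\max_j N^j\norm{c_{j,N}}=\infty$, i.e.\ the $C^\infty[N]$-smoothness norm must blow up; this immediately contradicts the Green--Tao bound $\norm{e(q_N(n))}_{C^\infty[N]}\leq M$. You instead attempt a from-scratch compactness argument on the rescaled coefficients, producing a limiting polynomial $q^*$ on $[0,1]$ with all non-zero Fourier coefficients vanishing, and then try to ``match this constraint against the particular growth and scaling of the coefficients.'' As you yourself acknowledge, you do not close this argument, and it is not clear that the Fourier-coefficient statement for the \emph{limit} polynomial yields the needed contradiction without precisely the quantitative Weyl estimate that the cited fact packages up. As written, this step is incomplete.

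For part (ii), your idea — lift to a connected simply connected cover and track both $b_i^{p_{i,N}(n)}$ and the fractional part $\{p_{i,N}(n)\}$ jointly — is morally the same as the paper's Lemma~\ref{L:5.1}. Note, though, that the correct bookkeeping is to enlarge the \emph{nilmanifold} to $(\mathbb{R}\times \tilde G_i)/(\mathbb{Z}\times\tilde\Gamma_i)$ with element $(1,\tilde b_i)$, while keeping the \emph{same} $\ell$-tuple of polynomials; part (i) is then applied to this enlarged nilmanifold, and there is no ``enlarged tuple of polynomials'' whose goodness needs to be checked. Your phrasing ``applying part (i) to the enlarged tuple (which remains good)'' suggests you may be considering a $2\ell$-tuple that duplicates the $p_{i,N}$'s, which would \emph{not} be good in the sense of Definition~\ref{D:Good} (the difference $p_{i,N}-p_{i,N}\equiv 0$ is a trivial combination failing the Weyl test). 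The paper sidesteps this by avoiding any reference to goodness inside Lemma~\ref{L:5.1}: it only uses the equidistribution conclusion of part (i) as a black box on the auxiliary nilmanifold, together with a small-set approximation of the discontinuous map $(t\mathbb{Z},g\Gamma)\mapsto F(b^{-\{t\}}g\Gamma)$ by continuous functions, rather than a cell decomposition of $[0,1)^\ell$.
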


\begin{remark}\label{R:equidtr}
In order to prove Theorem~\ref{T:T1}, we can assume that $X_1=\ldots=X_\ell=X.$

Indeed, in the general case we consider the nilmanifold $\tilde{X}=X_1\times\cdots\times X_\ell.$ Then $\tilde{X}=\tilde{G}/\tilde{\Gamma},$ where $\tilde{G}=G_1\times \cdots\times G_\ell$ is connected and simply connected and $\tilde{\Gamma}=\Gamma_1\times\cdots\times\Gamma_\ell$ is a discrete cocompact subgroup of $\tilde{G}.$ Each $b_i$ can be considered as an element of $\tilde{G}$ and each $x_i$ as an element of $\tilde{X}.$ Changing the base point we can also assume that $x=\Gamma.$
\end{remark}


Part (ii) of the previous result follows from Part (i) (see \cite[Lemma~5.1]{F1}):


\begin{lemma}\label{L:5.1}
Let $\ell\in\mathbb{N}$ and $(a_{1,N},\ldots,a_{\ell,N})_{N}$ be sequence of $\ell$-tuples of real numbers. Suppose that for every nilmanifold $X=G/\Gamma,$ with $G$ connected and simply connected, and every $b_1,\ldots,b_\ell\in G$ the sequence \[(b_1^{a_{1,N}(n)}\Gamma,\ldots,b_\ell^{a_{\ell,N}(n)}\Gamma)_{1\leq n\leq N}\] is equidistributed in the nilmanifold  $\overline{(b_1^s\Gamma)}_{s\in\mathbb{R}}\times\cdots\times \overline{(b_\ell^s\Gamma)}_{s\in\mathbb{R}}.$ Then, for every nilmanifold $X=G/\Gamma,$ $b_1,\ldots,b_\ell\in G$ and $x_1,\ldots,x_\ell\in X,$ the sequence \[(b_1^{[a_{1,N}(n)]}x_1,\ldots, b_\ell^{[a_{\ell,N}(n)]}x_\ell)_{1\leq n\leq N}\] is equidistributed in the nilmanifold $\overline{(b_1^nx_1)}_{n}\times\cdots\times \overline{(b_\ell^nx_\ell)}_{n}.$
\end{lemma}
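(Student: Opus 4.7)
The plan is to reduce to the hypothesis of the lemma by attaching to each nilmanifold an extra circle that tracks the fractional part of $a_{i,N}(n)$, and then to identify the resulting limit measure as Haar on the target orbit closures via unique ergodicity.

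First I would perform the standard reductions: by Remark~\ref{R:equidtr} I may take all the $X_i$ equal to a single $X=G/\Gamma$; the change-of-base-point formula of Subsection~\ref{Ss:3.2.2} lets me further take $x_i=\Gamma$ after replacing each $b_i$ by $g_i^{-1}b_ig_i$; and the lifting argument of Subsection~\ref{Ss:3.2.3} allows me to assume $G$ is connected and simply connected, so that real powers $b_i^s$ are available. With this in place I form the circle extension $H:=G\times\mathbb{R}$, $\Lambda:=\Gamma\times\mathbb{Z}$, $W:=H/\Lambda\cong X\times\mathbb{T}$, and $c_i:=(b_i,1)\in H$. A direct computation gives
\[
c_i^{a_{i,N}(n)}\Lambda=\bigl(b_i^{a_{i,N}(n)}\Gamma,\,\{a_{i,N}(n)\}\bigr),
\]
so the extended sequence simultaneously carries the real orbit and its fractional part. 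The hypothesis, applied to the $\ell$-tuple $(a_{1,N},\ldots,a_{\ell,N})_N$ in the nilmanifold $W$, yields equidistribution of $(c_1^{a_{1,N}(n)}\Lambda,\ldots,c_\ell^{a_{\ell,N}(n)}\Lambda)_{1\le n\le N}$ in $Z_1\times\cdots\times Z_\ell$, where $Z_i:=\overline{(c_i^s\Lambda)}_{s\in\mathbb{R}}$.

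To extract the integer-part statement, for each $F\in C(X^\ell)$ I would introduce the bounded, but not everywhere continuous, map $\Phi=\phi_1\times\cdots\times\phi_\ell\colon Z_1\times\cdots\times Z_\ell\to X^\ell$ given coordinate-wise by $\phi_i(y,t):=b_i^{-\rho(t)}y$, where $\rho\colon\mathbb{T}\to[0,1)$ is the canonical representative. By construction,
\[
(F\circ\Phi)\bigl(c_1^{a_{1,N}(n)}\Lambda,\ldots,c_\ell^{a_{\ell,N}(n)}\Lambda\bigr)=F\bigl(b_1^{[a_{1,N}(n)]}\Gamma,\ldots,b_\ell^{[a_{\ell,N}(n)]}\Gamma\bigr).
\]
The discontinuity locus of $F\circ\Phi$ lies inside $\bigcup_i\{t_i=0\}$, which is a null set for $m_{Z_1}\times\cdots\times m_{Z_\ell}$: each projection $Z_i\to\mathbb{T}$ is a surjective homomorphism of nilmanifolds, so it pushes $m_{Z_i}$ forward to $m_{\mathbb{T}}$, and $\{0\}\subset\mathbb{T}$ is Haar-null. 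The standard extension of equidistribution to bounded Riemann-integrable observables then shows that the Birkhoff averages in question converge to $\int(F\circ\Phi)\,d(m_{Z_1}\times\cdots\times m_{Z_\ell})$.

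Finally, I would identify this integral with $\int F\,dm_{Y_1\times\cdots\times Y_\ell}$, where $Y_i:=\overline{(b_i^n\Gamma)}_n$. Since $\Phi$ is a product, its pushforward decomposes as $(\phi_1)_*m_{Z_1}\times\cdots\times(\phi_\ell)_*m_{Z_\ell}$. A quick check using the descent $c_i\cdot(y,t)=(b_iy,t)$ on $W$ gives the equivariance $\phi_i\circ c_i=b_i\circ\phi_i$, so each $(\phi_i)_*m_{Z_i}$ is a $b_i$-invariant Borel probability measure supported on $Y_i$. Unique ergodicity of the nilrotation $b_i$ on its own orbit closure then forces $(\phi_i)_*m_{Z_i}=m_{Y_i}$, completing the argument. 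The main technical obstacle is exactly the non-continuity of $\Phi$: passing the equidistribution limit through $F\circ\Phi$ requires both the Riemann-integrability extension and the verification that the bad locus $\bigcup_i\{t_i=0\}$ is Haar-null in $Z_1\times\cdots\times Z_\ell$, which is where the surjectivity of each $Z_i\to\mathbb{T}$ is essential.
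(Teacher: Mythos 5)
Your proof is correct and follows essentially the same route as the paper's: both perform the same standard reductions, introduce the circle extension $W = H/\Lambda \cong X \times \mathbb{T}$ with the translation $c_i = (b_i, 1)$ to track the fractional part of $a_{i,N}(n)$, apply the hypothesis on the extended nilmanifold, handle the discontinuity of the composite observable (your Riemann-integrability/null-set argument is the abstract form of the paper's explicit $\delta$-approximation by $\tilde F_\delta$, using that the $\mathbb{T}$-coordinate is equidistributed), and then identify the limit measure with $m_{Y_1\times\cdots\times Y_\ell}$. The only cosmetic differences are that you treat the $\ell$-tuple coordinate-wise rather than first reducing to $\ell=1$, and you close the measure identification by invoking unique ergodicity of $b_i$ on $Y_i$ rather than the direct pushforward computation the paper attributes to Frantzikinakis.
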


\begin{proof}[Sketch of the proof]
Following \cite[Lemma~4.1]{F1}, we show the $\ell=1$ case, as the general one follows with some straightforward modifications.

Let $X=G/\Gamma$ be a nilmanifold, $b\in G$ and $x\in X.$ Using some standard reductions (namely, the lifting argument and the change of base point formula from Subsections~\ref{Ss:3.2.3} and ~\ref{Ss:3.2.2}), we can and will assume that $G$ is connected and simply connected and that $x=\Gamma.$

Letting $X_b:=\overline{(b^n\Gamma)}_{n}$ and $m_{X_b}$ the corresponding normalized Haar measure, we will show that for every $F\in C(X)$ we have
\begin{equation}\label{E:4.1_1}
\lim_{N\to\infty}\frac{1}{N}\sum_{n=1}^N F(b^{[a_N(n)]}\Gamma)=\int_{X_b}F\ dm_{X_b}.
\end{equation}
Using our assumption for the case $\tilde{X}:=\tilde{G}/\tilde{\Gamma},$ where $\tilde{G}:=\mathbb{R}\times G$ is connected and simply connected, $\tilde{\Gamma}:=\mathbb{Z}\times\Gamma$ and $\tilde{b}:=(1,b),$ for every $H\in C(\tilde{X})$ we have
\begin{equation}\label{E:4.1_2}
\lim_{N\to\infty}\frac{1}{N}\sum_{n=1}^N H(\tilde{b}^{a_N(n)}\tilde{\Gamma})=\int_{\tilde{X}_{\tilde{b}}} H\ d m_{\tilde{X}_{\tilde{b}}},
\end{equation}
where $\tilde{X}_{\tilde{b}}:=\overline{(s\mathbb{Z},b^s\Gamma)}_{s\in \mathbb{R}}$ and $m_{\tilde{X}_{\tilde{b}}}$ is its corresponding normalized Haar measure.\footnote{ Here we adapt the notation $z\mathbb{Z}$ which is more convenient than $z (\text{mod} 1).$}

Let $F\in C(X),$ and define $\tilde{F}:\tilde{X}\to \mathbb{C}$ with $\tilde{F}(t\mathbb{Z},g\Gamma):=F(b^{-\{t\}}g\Gamma).$ While $\tilde{F}$ may be discontinuous, for every $0<\delta<1/2$ there exists $\tilde{F}_\delta\in C(\tilde{X})$ that equals $\tilde{F}$ on $\tilde{X}_\delta=I_\delta\times X,$ where $I_\delta=\{t\mathbb{Z}:\;\norm{t}\geq \delta\},$ and it is uniformly bounded by $2\norm{F}_\infty.$

Since $\tilde{b}^{a_N(n)}=(a_N(n),b^{a_N(n)}),$
our assumption implies that $a_N(n)\mathbb{Z}\in I_\delta,$ and so $\tilde{b}^{a_N(n)}\tilde{\Gamma}\in \tilde{X}_\delta,$ for a set of $n$'s with density $1-2\delta.$\footnote{ By this we mean $\lim_{N\to\infty}N^{-1}\cdot|\{1\leq n\leq N:\;a_N(n)\mathbb{Z}\in I_\delta\}|=1-2\delta.$}
 So,
\[\limsup_{N\to\infty}\frac{1}{N}\sum_{n=1}^N |\tilde{F}(\tilde{b}^{a_N(n)}\tilde{\Gamma})-\tilde{F}_\delta(\tilde{b}^{a_N(n)}\tilde{\Gamma})|\leq 4\delta \norm{F}_{\infty},\] hence, since \eqref{E:4.1_2} holds for every $\tilde{F}_\delta,$ it also holds for $\tilde{F}.$

The map $(s\mathbb{Z},g\Gamma)\mapsto b^{-\{s\}}g\Gamma$ sends $\tilde{X}_{\tilde{b}}$ onto $X_b.$ Defining the measure $m$ on $X_b$ by
\[\int_{X_b} F\ dm:= \int_{\tilde{X}_{\tilde{b}}}F(b^{-\{s\}}g\Gamma)\ dm_{\tilde{X}_{\tilde{b}}}(s\mathbb{Z},g\Gamma),\] we have (see \cite[Lemma~4.1]{F1} for details) that $m=m_{X_b}.$ Thus,
since
\[\tilde{F}(\tilde{b}^{a_N(n)}\tilde{\Gamma})=\tilde{F}(a_N(n)\mathbb{Z},b^{a_N(n)}\Gamma)=F(b^{-\{a_N(n)\}}b^{a_N(n)}\Gamma)=F(b^{[a_N(n)]}\Gamma),\]
using \eqref{E:4.1_2} for the function $\tilde{F},$ we get
\begin{equation*}
\begin{split}
&\lim_{N\to\infty}\frac{1}{N}\sum_{n=1}^N F(b^{[a_N(n)]}\Gamma)=\int_{\tilde{X}_{\tilde{b}}}\tilde{F}\ d m_{\tilde{X}_{\tilde{b}}}\\
=&\int_{\tilde{X}_{\tilde{b}}}F(b^{-\{s\}}g\Gamma)\ dm_{\tilde{X}_{\tilde{b}}}(s\mathbb{Z},g\Gamma)=\int_{X_b} F\ d m_{X_b},
\end{split}
\end{equation*}
so we have \eqref{E:4.1_1}.
\end{proof}

Recalling that a sequence of $\ell$-tuples of variable polynomials $(p_{1,N}, \ldots, p_{\ell,N})_N$ is good if every non-trivial linear combination of $(p_{1,N})_N, \ldots, (p_{\ell,N})_N$ is good, we have the following:

\begin{lemma}\label{L:equi_multi}
Let $(p_{1,N}, \ldots, p_{\ell,N})_N$ be a good sequence of $\ell$-tuples of polynomials, $X_i=G_i/\Gamma_i$ nilmanifolds, with $G_i$ connected and simply connected, and suppose that $b_i\in G_i$ acts ergodically on $X_i,$ $1\leq i\leq \ell$. Then the sequence \[(b_1^{p_{1,N}(n)}\Gamma_1,\ldots,b_\ell^{p_{\ell,N}(n)}\Gamma_\ell)_{1\leq n\leq N}\] is equidistributed in $X_1\times\cdots\times X_\ell$.
\end{lemma}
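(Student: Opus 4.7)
The plan is to prove equidistribution by contradiction, using the Green--Tao quantitative result (Theorem~\ref{T:t1}) together with the goodness of any non-trivial linear combination of the $p_{i,N}$'s. First, I would pass to the product nilmanifold $\tilde{X}=X_1\times\cdots\times X_\ell=\tilde{G}/\tilde{\Gamma}$, where $\tilde{G}=G_1\times\cdots\times G_\ell$ is still connected and simply connected, and consider the polynomial sequence $g_N(n)=(b_1^{p_{1,N}(n)},\ldots,b_\ell^{p_{\ell,N}(n)})$ in $\tilde{G}$. Showing the claimed equidistribution on $\tilde{X}$ is equivalent to the statement.

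Next, I would suppose for contradiction that $(g_N(n)\tilde{\Gamma})_{1\leq n\leq N}$ fails to be equidistributed. Then there exist $\delta>0$ and a subsequence $(N_k)$ along which the finite orbit is not $\delta$-equidistributed. By Theorem~\ref{T:t1}, there is $M=M(\tilde{X},d,\delta)$ such that for each $N_k$ we can find a non-trivial horizontal character $\chi_k$ of $\tilde{G}$ with $\|\chi_k\|\leq M$ and $\|\chi_k(g_{N_k})\|_{C^\infty[N_k]}\leq M$. Since only finitely many horizontal characters have norm at most $M$, a further passage to a subsequence yields a fixed non-trivial $\chi$ that works for all $N_k$. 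The horizontal torus of $\tilde{X}$ is $Z_1\times\cdots\times Z_\ell$, each $Z_i\cong\mathbb{T}^{s_i}$, so $\chi$ corresponds to a tuple $(\kappa_1,\ldots,\kappa_\ell)\in\mathbb{Z}^{s_1}\times\cdots\times\mathbb{Z}^{s_\ell}$ not all zero, and $\chi(g_N(n))=e(q_N(n))$ where
\[ q_N(n)=\sum_{i=1}^\ell \alpha_i\, p_{i,N}(n),\qquad \alpha_i:=\kappa_i\cdot\beta_i, \]
with $\beta_i$ the projection of $b_i$ to the horizontal torus $Z_i$.

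Third, I would verify that this linear combination is non-trivial. Ergodicity of $b_i$ on $X_i$ forces $\beta_i$ to act ergodically on $\mathbb{T}^{s_i}$, hence $\kappa_i\cdot\beta_i\notin\mathbb{Z}$ whenever $\kappa_i\neq 0$. In particular, picking $i_0$ with $\kappa_{i_0}\neq 0$ gives $\alpha_{i_0}\neq 0$, so the goodness hypothesis applies to $(q_N)_N$: for every nonzero $\alpha\in\mathbb{R}$,
\[ \lim_{N\to\infty}\frac{1}{N}\sum_{n=1}^N e^{i\alpha q_N(n)}=0. \]

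Finally, and this is the crux, I would derive a contradiction between the smoothness bound $\|q_{N_k}\|_{C^\infty[N_k]}\leq M$ and the goodness of $q_N$. Writing $q_N(n)=\sum_{j=0}^d a_{j,N}n^j$ and $a_{j,N}=m_{j,N}+c_{j,N}$ with $m_{j,N}\in\mathbb{Z}$ the nearest integer, the smoothness bound gives $|c_{j,N_k} N_k^j|\leq M$ for $1\leq j\leq d$. Setting $\phi_{N_k}(t):=\sum_{j=1}^d c_{j,N_k}N_k^j\,t^j$, the polynomials $\phi_{N_k}$ have coefficients uniformly bounded by $M$, so, passing to a further subsequence, $\phi_{N_k}\to\phi$ uniformly on $[0,1]$ and $a_{0,N_k}\to a_0$ in $\mathbb{R}/\mathbb{Z}$. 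Because $I_{N_k}(n):=\sum_{j=1}^d m_{j,N_k}n^j\in\mathbb{Z}$, taking $\alpha=2\pi k$ for nonzero $k\in\mathbb{Z}$ yields
\[ \frac{1}{N_k}\sum_{n=1}^{N_k} e^{2\pi i k q_{N_k}(n)}=e(ka_{0,N_k})\cdot\frac{1}{N_k}\sum_{n=1}^{N_k} e\bigl(k\phi_{N_k}(n/N_k)\bigr)\longrightarrow e(ka_0)\int_0^1 e(k\phi(t))\,dt, \]
which by goodness must vanish for every $k\neq 0$. A parallel analysis for a well-chosen non-integer multiple of $2\pi$ (following the Weyl-sum/van~der~Corput arguments adapted to variable polynomials as in \cite[Proof of Lemma~4.2]{F1}) forces a relation on $\phi$ and on the fractional behavior of the integer polynomials $I_{N_k}$ that is inconsistent with the boundedness of $\phi$ and the smoothness bound on $q_{N_k}$, producing the contradiction. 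The main obstacle I expect is precisely this last step: extracting, from the smoothness bound, a specific $\alpha\neq 0$ for which the Weyl sum is bounded away from $0$ in the limit, contradicting goodness; this is where the sublinear-growth/oscillation information encoded in the goodness hypothesis is essential.
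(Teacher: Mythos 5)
Your proposal follows the paper's strategy almost exactly: reduce to a single product nilmanifold, argue by contradiction, apply the Green--Tao quantitative equidistribution theorem (Theorem~\ref{T:t1}) to obtain a horizontal character $\chi$ of bounded norm with bounded smoothness norm $\norm{\chi(g_N)}_{C^\infty[N]}$, project to the horizontal torus, and use ergodicity of each $b_i$ (i.e.\ rational independence of $1,\beta_{i,1},\ldots,\beta_{i,s_i}$) to ensure that $q_N=\sum_i\alpha_i p_{i,N}$ is a non-trivial linear combination, hence a good polynomial sequence. Your extra step of passing to a subsequence so that the character can be taken fixed (using that only finitely many $\chi$ have $\norm{\chi}\le M$) is correct and is implicit in the paper. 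Up to this point everything matches.

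The genuine gap is exactly where you flag it. You need the implication ``$(q_N)_N$ good $\Rightarrow \norm{e(q_N(\cdot))}_{C^\infty[N]}=\max_{1\le j\le d}(N^j\norm{c_{j,N}})\to\infty$,'' and you attempt to derive it by a Riemann-sum argument. That argument only controls integer multiples: writing $q_N(n)=\sum_j(m_{j,N}+c_{j,N})n^j$ with $m_{j,N}\in\mathbb{Z}$, the term $e(km_{j,N}n^j)$ drops out for $k\in\mathbb{Z}$, so you only obtain $\int_0^1 e(k\phi(t))\,dt=0$ for all nonzero integers $k$, i.e.\ that the pushforward of Lebesgue measure by $\phi$ mod $1$ is Haar. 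This is not yet a contradiction: $\phi(t)=ct$ for a nonzero integer $c$ satisfies all these constraints and is perfectly consistent with the smoothness bound (take $c_{1,N}\to c/N$). To rule this out one must test against non-integer $\alpha$, where the integer part $\sum_j m_{j,N}n^j$ no longer vanishes, and your ``parallel analysis for a well-chosen non-integer multiple'' is not spelled out --- as you concede. The paper sidesteps this entirely by simply citing \cite{F4} for the fact that the coefficients of a good variable polynomial sequence satisfy $\lim_{N\to\infty}\max_{1\le j\le d}(N^j\norm{c_{j,N}})=\infty$, which is precisely the reverse implication you are trying to re-derive. So either invoke that result from \cite{F4}, or supply a full self-contained argument; the subsequence/Riemann-sum reduction as written does not close the proof.
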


\begin{proof}
We follow \cite[Lemma~5.3]{F1}.
As the general case is similar, we assume that $X_1=\ldots=X_\ell=X.$  Arguing by contradiction, we will also assume that for some $\delta>0,$ $(b_1^{p_{1,N}(n)}\Gamma,\ldots,b_\ell^{p_{\ell,N}(n)}\Gamma)_{1\leq n\leq N}$ is not $\delta$-equidistributed in $X^\ell.$

 If $p_{i,N}(t)=\sum_{k=0}^{d_i} c_{i,k,N}t^k,$ then
\[b_i^{p_{i,N}(n)}=b_{i,0,N}\cdot b_{i,1,N}^{n}\cdots b_{i,d_i,N}^{n^{d_i}},\] where $b_{i,j,N}=b^{c_{i,j,N}},$ $0\leq j\leq d_i,$ $1\leq i\leq \ell,$ so, for all $N\in \mathbb{N},$ $(b_1^{p_{1,N}(n)},\ldots,b_\ell^{p_{\ell,N}(n)})_n$ is a polynomial sequence in $G^\ell.$

Applying  Theorem~\ref{T:t1}, we have a constant $M\equiv M(\delta, X,d_1,\ldots,d_\ell)$ and a horizontal character $\chi$ of $X^\ell$ with $\norm{\chi}\leq M$ such that
\[\norm{\chi(b_1^{p_{1,N}(n)},\ldots,b_\ell^{p_{\ell,N}(n)})}_{C^\infty[N]}\leq M.\]
Let $\pi(b_i)=(\beta_{i,1}\mathbb{Z},\ldots,\beta_{i,s}\mathbb{Z}),$ $1\leq i\leq \ell,$ where $\beta_{i,j}\in \mathbb{R},$ be the projection of $b_i$ on the horizontal torus $\mathbb{T}^s$ (the integer $s$ is bounded by the dimension of $X$).  Using the ergodicity assumption on the $b_i$'s, for all $1\leq i\leq \ell,$ the set $\{1,\beta_{i,1},\ldots,\beta_{i,s}\}$ consists of rationally independent elements. For $t\in \mathbb{R}$ we have $\pi(b_i^t)=(t\tilde{\beta}_{i,1}\mathbb{Z},\ldots,t\tilde{\beta}_{i,s}\mathbb{Z})$ for some elements $\tilde{\beta}_{i,j}\in \mathbb{R}$ with $\tilde{\beta}_{i,j}\mathbb{Z}=\beta_{i,j}\mathbb{Z},$\footnote{ Note here that, for all $1\leq i\leq \ell,$ the $\tilde{\beta}_{i,j}$'s are also rationally independent.}
so, we have that
\[\chi(b_1^{p_{1,N}(n)},\ldots,b_\ell^{p_{\ell,N}(n)})=e\left(\sum_{i=1}^\ell p_{i,N}(n)\sum_{j=1}^s\lambda_{i,j} \tilde{\beta}_{i,j}\right)\] for some integers $\lambda_{i,j}\in \mathbb{Z}.$

If, for $n\in \mathbb{N},$ we set
\[p_N(n):=\sum_{i=1}^\ell p_{i,N}(n)\sum_{j=1}^s\lambda_{i,j} \tilde{\beta}_{i,j}=\sum_{k=0}^d c_{k,N}n^k,\]  we have that the sequence $(p_N)_N,$ being a non-trivial (as $\chi$ is non-trivial and $\tilde{\beta}_{i,j}$'s are rationally independent) linear combination of the $(p_{i,N})_N$'s, is good. Combining the last three relations, we get
$M\geq \norm{e(p_N(n))}_{C^\infty[N]}\geq \max_{1\leq j\leq d}\left(N^j\norm{c_{j,N}}\right),$ which is a contradiction to $\lim_{N\to\infty}\max_{1\leq j\leq d}\left(N^j\norm{c_{j,N}}\right)=\infty$; a condition that the coefficients of a good variable polynomial sequence satisfy (see \cite{F4}).   
\end{proof}

The last ingredient in proving Part  (i)  of Theorem~\ref{T:T1} is the following lemma:

\begin{lemma}[Lemma~5.2, \cite{F1}]\label{L:5.2}
Let $X=G/\Gamma$ be a nilmanifold with $G$ connected and simply connected. Then, for every $b_1,\ldots,b_\ell\in G,$ there exists an $s_0\in \mathbb{R}$ such that for all $1\leq i\leq \ell$ the element $b_i^{s_0}$ acts ergodically on the nilmanifold $\overline{(b_i^s \Gamma)}_{s\in\mathbb{R}}.$
\end{lemma}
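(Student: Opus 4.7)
The plan is to reduce ergodicity of each $b_i^{s_0}$ on the nilmanifold $Y_i := \overline{(b_i^s\Gamma)}_{s\in\mathbb{R}}$ to an assertion about rotations on a finite-dimensional torus, and then observe that the set of ``bad'' parameters $s_0$ is countable for each $i$.

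First I would set up the reduction. For each $1\le i\le \ell$, the orbit closure $Y_i$ is a sub-nilmanifold of $X$ on which the $\mathbb{R}$-action $s\mapsto b_i^s$ is, by definition, ergodic (the orbit is dense). Let $Z_i$ denote the horizontal torus of $Y_i$ and let $\pi_i: Y_i \to Z_i$ be the natural projection; since $Y_i$ is connected (as a continuous image of $\mathbb{R}$), $Z_i$ is isomorphic to some finite-dimensional torus $\mathbb{T}^{s_i}$. The standard criterion for ergodicity of nilrotations (as quoted in Subsection~\ref{Ss:ud}, cf.\ \cite{L,Rat}) says that a nilrotation on $Y_i$ is ergodic if and only if its projection to $Z_i$ is ergodic, both for the $\mathbb{R}$-action and for a single $\mathbb{Z}$-action.

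Next I would compute explicitly on the torus. Write $\pi_i(b_i^s) = (s\tilde{\beta}_{i,1}\mathbb{Z}, \ldots, s\tilde{\beta}_{i,s_i}\mathbb{Z})$ for some $\tilde{\beta}_{i,j}\in \mathbb{R}$; since the $\mathbb{R}$-action is ergodic on $Z_i$, Weyl's criterion forces $\{1,\tilde{\beta}_{i,1},\ldots,\tilde{\beta}_{i,s_i}\}$ to be rationally independent. For a fixed $s_0\in\mathbb{R}$, the $\mathbb{Z}$-rotation by $b_i^{s_0}$ on $Z_i$ is ergodic exactly when $\{1, s_0\tilde{\beta}_{i,1},\ldots,s_0\tilde{\beta}_{i,s_i}\}$ is rationally independent, i.e.\ there is no non-zero $(a_0,a_1,\ldots,a_{s_i})\in \mathbb{Z}^{s_i+1}$ with
\[
a_0 + s_0\bigl(a_1\tilde{\beta}_{i,1}+\cdots+a_{s_i}\tilde{\beta}_{i,s_i}\bigr)=0.
\]
If $(a_1,\ldots,a_{s_i})=\vec{0}$ the above forces $a_0=0$, so it is trivial; otherwise, by the rational independence of the $\tilde{\beta}_{i,j}$'s, the parenthesized sum is non-zero and $s_0$ is uniquely determined by $(a_0,a_1,\ldots,a_{s_i})$. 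Hence the ``bad set''
\[
B_i := \{s_0\in\mathbb{R}:\; b_i^{s_0}\text{ does not act ergodically on } Y_i\}
\]
is at most countable.

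Finally, I would conclude by a union bound: $\bigcup_{i=1}^{\ell}B_i$ is a countable subset of $\mathbb{R}$, so any $s_0\in \mathbb{R}\setminus\bigcup_{i=1}^{\ell}B_i$ (for example, any such $s_0\neq 0$) works. The only delicate point, which I would want to verify carefully, is the passage from ergodicity on the horizontal torus $Z_i$ back to ergodicity on the whole sub-nilmanifold $Y_i$; this is precisely the classical criterion for ergodic nilrotations cited in Subsection~\ref{Ss:ud}, and it applies here because $Y_i = \overline{\{b_i^s\Gamma:s\in\mathbb{R}\}}$ is itself a nilmanifold on which the chosen $b_i^{s_0}$ acts. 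No further obstacle is expected beyond invoking this criterion correctly.
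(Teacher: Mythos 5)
The paper quotes this lemma from \cite{F1} without reproducing a proof, so there is no in-paper argument to compare your attempt against; that said, your proof is correct and is the standard route, reducing ergodicity of $b_i^{s_0}$ on $Y_i$ to ergodicity of the induced rotation on the horizontal torus $Z_i$ and then noting that the ``bad'' parameters $s_0$ form a countable set for each $i$, which is also how \cite{F1} handles it. One small inaccuracy worth flagging: ergodicity of the $\mathbb{R}$-flow on $Z_i$ forces only the rational independence of $\{\tilde\beta_{i,1},\ldots,\tilde\beta_{i,s_i}\}$ among themselves, not of $\{1,\tilde\beta_{i,1},\ldots,\tilde\beta_{i,s_i}\}$ (for instance the flow $s\mapsto s/2$ on $\mathbb{T}$ is ergodic, yet $\{1,1/2\}$ is rationally dependent); it is the $\mathbb{Z}$-action, not the $\mathbb{R}$-flow, whose ergodicity involves the constant $1$. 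This is harmless, because the only fact you actually use in showing each $B_i$ is countable is that $a_1\tilde\beta_{i,1}+\cdots+a_{s_i}\tilde\beta_{i,s_i}\ne 0$ whenever $(a_1,\ldots,a_{s_i})\ne\vec{0}$, which follows from the weaker (and correct) independence statement, so the countability of each $B_i$ and the union bound go through exactly as you wrote them. The ``delicate point'' you flagged---passing between ergodicity on $Z_i$ and on $Y_i$---is indeed justified: $Y_i$ is a sub-nilmanifold $H_i/(H_i\cap\Gamma)$ with $H_i$ a closed connected (hence, inside a simply connected nilpotent $G$, also simply connected) subgroup, so the classical criterion applies.
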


We are now ready to prove Theorem \ref{T:T1}.

\begin{proof}[Proof of Theorem~\ref{T:T1}]
Using Lemma \ref{L:5.1} we see that Part (ii) of Theorem \ref{T:T1} follows from Part
(i). To establish Part (i) let $ b_1,\ldots, b_\ell \in G $. By Lemma~\ref{L:5.2} there exists a non-zero $ s_0\in \mathbb{R} $ such that for every $1\leq i\leq \ell$ the element $b_i^{s_0}$ acts ergodically on the nilmanifold $\overline{(b_i^s \Gamma)}_{s\in\mathbb{R}}.$ Using Lemma~\ref{L:equi_multi} for the elements $ b_i^{s_0} $ and the polynomials $ p_{i,N}/s_0 $ (which are still forming a good sequence of $\ell$-tuples of polynomials) we get that the sequence
$(b_1^{p_{1,N}(n)}\Gamma,\ldots,b_\ell^{p_{\ell,N}(n)}\Gamma)_{1\leq n\leq N}$ is equidistributed in the nilmanifold  $\overline{(b_1^s\Gamma)}_{s\in\mathbb{R}}\times\cdots\times \overline{(b_\ell^s\Gamma)}_{s\in\mathbb{R}}$, hence we get the conclusion.
\end{proof}

\section{Proof of main results}

To prove our main results, we first show that the polynomial sequences from Theorems~\ref{T:main_new} and \ref{T:mc_new} are good and super nice. If either $g_2\prec g_1$ or $g_2\sim g_1,$ we write $g_2\precsim g_1.$


\begin{lemma}\label{L:are_good}
The polynomial sequences from Theorems~\ref{T:main_new} and \ref{T:mc_new} are good.
\end{lemma}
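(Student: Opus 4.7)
The plan is to reduce the goodness claim to the single-sequence case and then establish it by a standard Weyl--van der Corput estimate. First, since $\mathcal{C}(g_1,\ldots,g_l)$ is a real vector space and bounded real sequences form a real vector space as well, every non-trivial real linear combination of the polynomial sequences in question is again of the form \eqref{E:polyv}. The strong independence hypothesis in Theorem~\ref{T:main_new} guarantees that each such non-trivial combination is non-constant in $n$, while for Theorem~\ref{T:mc_new} every non-trivial linear combination of $p_N, 2p_N,\ldots,\ell p_N$ is simply a non-zero scalar multiple of $p_N$, hence non-constant in $n$ by hypothesis. It therefore suffices to prove the single-sequence statement: any $(q_N)_N$ of the form \eqref{E:polyv} that is non-constant in $n$ is good.

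To prove this, write $q_N(n)=\sum_{j=0}^{d}b_{j,N}n^j$ and let $d_0\in\{1,\ldots,d\}$ be the largest index for which $b_{d_0,N}$ is a non-zero element of $\mathcal{C}(g_1,\ldots,g_l)$. Using the strict hierarchy $g_l\prec\cdots\prec g_1$, every non-zero $c=\sum_{i=1}^{l}\rho_i/g_i\in\mathcal{C}(g_1,\ldots,g_l)$ satisfies $c\sim \rho_{i^*}/g_{i^*}(N)$, where $i^*$ is the largest index with $\rho_{i^*}\neq 0$; in particular $c\to 0$, and since $g_{i^*}(x)\prec x$ we also obtain $N^{j}|c|\to\infty$ for every $j\ge 1$. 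For the sub-case $d_0=1$, the geometric-series bound gives $\bigl|\sum_{n=1}^{N}e^{i\alpha b_{1,N}n}\bigr|\le 2/|1-e^{i\alpha b_{1,N}}|\asymp 1/|\alpha b_{1,N}|$ for any fixed non-zero $\alpha$ (since $b_{1,N}\to 0$), and dividing by $N$ yields a bound of order $1/(N|b_{1,N}|)\to 0$ by the asymptotic above.

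For $d_0\ge 2$, I would proceed by induction on $d_0$, using van der Corput's inequality to pass from $q_N$ to the degree-$(d_0-1)$ polynomial $\Delta_h q_N(n):=q_N(n+h)-q_N(n)$: a direct expansion shows that $\Delta_h q_N$ is again of the form \eqref{E:polyv}, with coefficient of $n^k$ equal to $\sum_{j=k+1}^{d_0}\binom{j}{k}h^{j-k}\,b_{j,N}$ (still in $\mathcal{C}(g_1,\ldots,g_l)$ for $k\ge 1$), leading coefficient $d_0\,h\,b_{d_0,N}\neq 0$ in $\mathcal{C}(g_1,\ldots,g_l)$ for each fixed $h\ge 1$, and a bounded constant term. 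The inductive hypothesis then yields goodness of $(\Delta_h q_N)_N$ for each fixed $h$, and feeding this back into van der Corput's inequality (and sending the averaging parameter to infinity) completes the proof. The main technical point, and the place requiring care, is to verify that the differencing preserves the structural form \eqref{E:polyv} together with the asymptotic $N^{d_0-j}|b_{d_0,N}|\to\infty$ that drives the linear base case; this is precisely what the explicit expansion of $\Delta_h q_N$ above confirms, so no ingredient beyond the reduction and the base case is needed.
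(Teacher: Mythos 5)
Your proof is correct and takes essentially the same route as the paper: reduce via linearity to a single non-constant variable polynomial of the form~\eqref{E:polyv}, kill the degree-one case by the explicit geometric-series estimate using $|b_{1,N}|\to 0$ together with $N|b_{1,N}|\to\infty$, and reduce higher degree to degree one by iterating the van der Corput Lemma~\ref{L:vdc}. The only difference is that you are slightly more explicit than the paper in isolating the true degree $d_0$ and in tracking that the differenced polynomial stays in the class~\eqref{E:polyv} with a nonvanishing leading coefficient, which is a welcome clarification rather than a new idea.
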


\begin{proof}
Let $\lambda_1 p_{1,N}+\ldots+\lambda_\ell p_{\ell,N}$ be a non-trivial linear combination of strongly independent variable polynomials as in \eqref{E:polyv}, which is also of the same form. In case this combination is a polynomial of degree $1,$ precomposing with the opposite of its constant term, without loss of generality, we can assume that it is of the form $h(N)n,$ where $h(N)\sim 1/g(N),$ with $1\prec g(N)\prec N$ (hence $h(N)\to 0$ and $|h(N)|N\to\infty$ monotonically as $N\to\infty$). For any $\alpha\neq 0,$ as $N\to\infty,$ we have that
\[\frac{1}{N}\sum_{n=0}^{N-1}e^{i\alpha h(N)n}=\frac{1}{N}\cdot\frac{1-e^{i\alpha h(N)N}}{1-e^{i\alpha h(N)}}=\frac{h(N)}{1-e^{i\alpha h(N)}}\cdot\frac{1-e^{i\alpha h(N)N}}{h(N)N}\to \frac{i}{\alpha}\cdot0=0.\]
In case the combination is a polynomial of degree $d,$ after using Lemma~\ref{L:vdc} $(d-1)$ times, we get a polynomial of degree $1,$ hence the result follows from the previous step.
\end{proof}

Recall that when we want to check that a $k$-tuple, for $k>1,$ has the $R_k$-property, we have to check (according to Definition~\ref{D:R_property}) that for every $1\leq i\leq k$ the corresponding $(k-1)$-tuple has the $R_{k-1}$-property. If a $(k-1)$-tuple corresponds to the index $i_0,$ we say that it is \emph{descending} from the $i_0$ term of the previous step.

\begin{lemma}\label{L:are_super_nice}
The polynomial sequences from Theorems~\ref{T:main_new} and \ref{T:mc_new} are super nice.
\end{lemma}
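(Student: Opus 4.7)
The plan is to reduce both cases to an analysis of asymptotic orders of elements of $\mathcal{C}(g_1,\ldots,g_l)$. The underlying observation is that any non-zero $a=\sum_{i=1}^l \rho_i/g_i \in \mathcal{C}$ satisfies $a_N \sim \rho_{i^*}/g_{i^*}(N)$, where $i^*$ is the largest index with $\rho_{i^*}\neq 0$; since $g_{i^*} \in \mathcal{SLE}$ is eventually monotone with $1 \prec g_{i^*}(x) \prec x$, the sequence $(a_N)_N$ has the $R_1$-property (bounded, eventually of constant sign, and $|a_N|\cdot N \sim |\rho_{i^*}|\, N/g_{i^*}(N) \to \infty$ monotonically by Hardy-field considerations). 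I will call $i^*$ the \emph{order} of $a$.

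For the sequence in Theorem~\ref{T:mc_new}, the leading coefficient $a_{d,N} \in \mathcal{C}$ is non-zero for large $N$ by the preceding observation (since $p_N$ is assumed non-constant in $n$), and all other coefficients are bounded. Hence Remark~\ref{R:super}(5) immediately yields that $(p_N,2p_N,\ldots,\ell p_N)_N$ is super nice for every $\ell$.

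For the $\ell$-tuple from Theorem~\ref{T:main_new}, Property (i) of Definition~\ref{D:super_nice} follows from strong independence: each $p_{i,N}$ and each $p_{i,N}-p_{j,N}$ is a non-trivial linear combination and hence non-constant, with leading coefficient a non-zero element of $\mathcal{C}$, so it is eventually non-zero and the degree is independent of $N$. For Properties (ii) and (ii)$'$, the key structural fact is that vdC-operations preserve the class $\mathcal{C}$: the coefficients of any polynomial obtained from $(\mathcal{P}_N)_N$ by iterated vdC-operations are $\mathbb{R}$-linear combinations of the original $a_{i,j,N}$ with multipliers that depend polynomially on the auxiliary shifts $h_m$ but not on $N$, and $\mathcal{C}$ is closed under such combinations. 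Thus, after terminating with $k$ linear polynomials, the leading coefficients form a $k$-tuple $(\tilde{a}_{1,N},\ldots,\tilde{a}_{k,N})$ of non-zero elements of $\mathcal{C}$ (for $h_m$'s large enough, by Lemma~\ref{L:type}).

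I would verify the $R_k$-property of this tuple by induction on $k$. For each index $i$, I compute $s(i)$, the order of $\tilde{a}_{i,N}$, and attempt to choose $j_0 \neq i$ so that Definition~\ref{D:R_property}(ii)(a) applies: if $\tilde{a}_{j_0,N}$ has the same dominant term as $\tilde{a}_{i,N}$, then each difference $\tilde{a}_{j_0,N}-\tilde{a}_{j,N}$ has order at most $s(i)$, so the ratio $(\tilde{a}_{j_0,N}-\tilde{a}_{j,N})/\tilde{a}_{i,N}$ is bounded, eventually of constant sign, and by a direct Hardy-field calculation satisfies $|\text{ratio}|\cdot N \to \infty$ monotonically; these $(k-1)$ ratios are themselves elements of a ratio-class with analogous order structure, so the inductive hypothesis closes the argument. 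When no such $j_0$ exists (e.g.\ when $\tilde{a}_{i,N}$ uniquely realises a distinguished order), I would invoke (ii)(b) or (ii)(c), choosing $j_0$ (and $k_0$ for (c)) so that $\tilde{a}_{i,N}-\tilde{a}_{j_0,N}$ generates a new dominant term of the required order. Property (ii)$'$ is handled identically, since replacing $p_{i_0,N}$ by $-p_{i_0,N}$ and each $p_{j,N}$ ($j\neq i_0$) by $p_{j,N}-p_{i_0,N}$ preserves both strong independence and the $\mathcal{C}$-structure. The main obstacle is precisely this case analysis: one must verify that in every configuration of orders and dominant coefficients at least one of (a), (b), (c) is realisable, which requires careful tracking of the order-multiset of the leading-coefficient tuple through each vdC-step and a guarantee, via Lemma~\ref{L:type} and genericity of the auxiliary shifts, that no accidental cancellation occurs.
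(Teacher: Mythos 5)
Your overall framework agrees with the paper's: reducing to asymptotic orders of elements of $\mathcal{C}(g_1,\ldots,g_l)$, disposing of Theorem~\ref{T:mc_new} via Remark~\ref{R:super}(5), obtaining Property~(i) of Definition~\ref{D:super_nice} from strong independence and the form~\eqref{E:polyv}, and observing that vdC-operations preserve the $\mathcal{C}$-structure. Where the proposal falls short is precisely the step you flag as ``the main obstacle'': verifying that after the vdC-operations the tuple $(\tilde a_{1,N},\ldots,\tilde a_{k,N})$ of leading coefficients actually has the $R_k$-property. This is the whole content of Properties (ii) and (ii)$'$, and the paper does not treat it as an afterthought — it is the part that requires an explicit recursive scheme. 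Leaving it as ``one must verify that in every configuration \ldots at least one of (a),(b),(c) is realisable'' is a genuine gap, not a routine detail.

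Moreover, the one concrete recipe you do sketch for clause~(a) is wrong: you propose choosing $j_0$ so that $\tilde a_{j_0,N}$ shares the dominant term of $\tilde a_{i,N}$, and then assert that each $\tilde a_{j_0,N}-\tilde a_{j,N}$ has order at most $s(i)$. That inference is only valid if every $\tilde a_{j,N}$ itself has order at most $s(i)$; if some $\tilde a_{j,N}$ decays more slowly than $\tilde a_{i,N}$, the ratio $(\tilde a_{j_0,N}-\tilde a_{j,N})/\tilde a_{i,N}$ is unbounded and clause~(a) fails for that $i$. So your criterion only covers indices $i$ of maximal order, while the remaining indices are deferred to (b)/(c) without a rule. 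The paper closes this by first ordering the coefficients by their decay, $a_{k,N}\precsim\cdots\precsim a_{1,N}$, and then prescribing a uniform choice: for $i=1$ take $j_0=k$ and verify~(a); for every $i>1$ take $j_0=1$ and verify~(b) (clause~(c) is never invoked). A short inductive bookkeeping argument (``Step~$\lambda$'') then shows that every ratio arising at every depth of the recursion is, up to sign and telescoping cancellation, $\sim a_{j,N}/a_{t,N}$ for original indices with $a_{j,N}\precsim a_{t,N}$, and these inherit the $R_1$-property from the eventual monotonicity of $\mathcal{LE}$ functions. Without such an explicit scheme, the $R_k$-verification in your proposal remains open.
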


\begin{proof}
For a single polynomial sequence as in \eqref{E:polyv}, the result follows immediately from Remark~\ref{R:super} (5) and the properties of Hardy field functions.

For multiple sequences, $(i)$ follows by the form \eqref{E:polyv} that the variable polynomial sequences have. As $(\mathcal{P}_N)_N$ and $(\mathcal{P}'_N)_N$ consist of polynomials of the same form, $(ii)$ and $(ii)'$ will both follow by the same argument.

After performing, if needed, finitely many vdC-operations to the polynomial families of interest, assuming that we have $k$ many essentially distinct terms of the form $a_{i,N}n,$ $1\leq i\leq k,$ we have $a_{i,\cdot}\in \mathcal{C}(g_1,\ldots,g_l)$ and $a_{k,N}\precsim\ldots\precsim a_{2,N}\precsim a_{1,N}.$\footnote{ This happens because vdC-operations preserve the essential distinctness property of the polynomials and at each step the coefficient functions belong to $\mathcal{C}(g_1,\ldots,g_l).$} In order to show that the sequences $\{(a_{i,N})_N:\;1\leq i\leq k\}$ have the $R_k$-property, we present an algorithmic way of finding the corresponding terms at the steps $1$ and $\lambda,$ for $\lambda\geq 2$:

\medskip

\noindent {\bf{Step 1:}} For $i=1$ we pick $j_0=k$ (i.e., the largest index). In this case we will show that we have property (ii) (a) (of Definition~\ref{D:R_property}). The terms become:
\[\frac{a_{k,N}-a_{j,N}}{a_{1,N}}\sim \frac{a_{j,N}}{a_{1,N}}, \;\;1\leq j\leq k-1.\]

For $i>1,$ we pick $j_0=1$ (i.e., the smallest index). In this case we will show that we have property (ii) (b). The terms become:
\[\frac{a_{j,N}}{a_{i,N}-a_{1,N}}\sim \frac{a_{j,N}}{a_{1,N}}, \;\;2\leq j\leq k.\]
\noindent {\bf{Step $\lambda$:}} After we order them from largest to smallest growth, we denote the $j$-th term at the $\lambda$-th step with $a_{\lambda,j,N}$. We have two cases:

\medskip

$\bullet$ The sequence of coefficients is descending from the $i=1$ term of the $(\lambda-1)$-th step.

For $i=1$ we pick $j_0=k-\lambda+1$ and show property (ii) (a) (for the $i=1$ case we always pick the largest index $j_0$ and show property (ii) (a)). For $1\leq j\leq j_0-1,$ we have
\[\frac{a_{\lambda,j_0,N}-a_{\lambda,j,N}}{a_{\lambda,1,N}}=\frac{a_{\lambda-1,j,N}-a_{\lambda-1,j_0,N}}{a_{\lambda-1,j_0+1,N}-a_{\lambda-1,1,N}}\sim \frac{a_{\lambda-1,j,N}}{a_{\lambda-1,1,N}},\]
where the numerator comes from the difference $(a_{\lambda-1,j_0+1,N}-a_{\lambda-1,j_0,N})-\break(a_{\lambda-1,j_0+1,N}-a_{\lambda-1,j,N}),$ and the (common) denominators are canceled.

For $i>1$ we pick $j_0=1$ and show property (ii) (b) (for the $i>1$ case we always pick $j_0=1$ and show property (ii) (b)). For $2\leq j\leq k-\lambda+1$ we have
\[\frac{a_{\lambda,j,N}}{a_{\lambda,i,N}-a_{\lambda,1,N}}=\frac{a_{\lambda-1,k-\lambda+2,N}-a_{\lambda-1,j,N}}{a_{\lambda-1,1,N}-a_{\lambda-1,i,N}}\sim \frac{a_{\lambda-1,j,N}}{a_{\lambda-1,1,N}},\] where the denominator comes from the difference $(a_{\lambda-1,k-\lambda+2,N}-a_{\lambda-1,i,N})-(a_{\lambda-1,k-\lambda+2,N}$ $-a_{\lambda-1,1,N}),$ and, as in the previous case, the (common) denominators are canceled.

\medskip

$\bullet$ The sequence of coefficients is descending from the $i>1$ term of the $(\lambda-1)$-th step.

 For $i=1,$ we choose $j_0=k-\lambda+1$. For all $1\leq j\leq j_0-1$ we have:
\[\frac{a_{\lambda,j_0,N}-a_{\lambda,j,N}}{a_{\lambda,1,N}}=\frac{a_{\lambda-1,j_0+1,N}-a_{\lambda-1,j+1,N}}{a_{\lambda-1,2,N}}\sim\frac{a_{\lambda-1,j+1,N}}{a_{\lambda-1,2,N}}.\]

For $i>1,$ we choose $j_0=1$. For all $2\leq j\leq k-\lambda+1,$ we have
\[\frac{a_{\lambda,j,N}}{a_{\lambda,i,N}-a_{\lambda,1,N}}=\frac{a_{\lambda-1,j+1,N}}{a_{\lambda-1,i+1,N}-a_{\lambda-1,2,N}}\sim \frac{a_{\lambda-1,j+1,N}}{a_{\lambda-1,2,N}}.\]
Note that each of the aforementioned terms, at each step, is (up to a sign) of the form
\[\frac{a_{i,N}-a_{j,N}}{a_{s,N}-a_{t,N}},\;s>i>j\geq t,\; \frac{a_{i,N}-a_{j,N}}{a_{t,N}},\;i>j\geq t,\;\text{or},\;\frac{a_{j,N}}{a_{i,N}-a_{t,N}},\;t<\min\{i,j\},\] i.e., combinations of terms from the initial sequence (because of the cancellations mentioned above) which are all $\sim$ to $a_{j,N}/a_{t,N}.$ The claim now follows by the properties of elements from $\mathcal{LE}$ as each coefficient is a logarithmico-exponential Hardy function, hence eventually monotone, which is either $\sim 1$ or $\sim 1/g(N)$ with $1\prec g(N)\prec N$ by the construction.
\end{proof}

We now prove Theorem~\ref{T:main} (which implies Theorem~\ref{T:main_new}):

\begin{proof}[Proof of Theorem~\ref{T:main}]
We start by using Proposition~\ref{L:characteristic_factor} in order to get that the nilfactor
$ \mathcal{Z} $ is characteristic for the multiple average in \eqref{E:main33} (which can be used as the polynomial iterates are super nice). Via Theorem~\ref{T:HK} we can assume without loss of generality that our system is an inverse limit of nilsystems. By a standard approximation argument, we can further assume that it is actually a nilsystem.

Let $(X=G/\Gamma,\mathcal{G}/\Gamma,m_X,T_b)$ be a nilsystem, where $ b\in G $ is ergodic, and $ F_1,\ldots,F_\ell\in L^\infty(m_X) $. Our objective now is to show that 
\begin{equation}\label{E:e3}
\lim_{N\to \infty}\frac{1}{N}\sum_{n=1}^N F_1(b^{[p_{1,N}(n)]}x)\cdot \ldots \cdot F_\ell(b^{[p_{\ell,N}(n)]}x)= \int F_1 \; dm_X \cdot \ldots \cdot \int F_\ell \; dm_X,
\end{equation}
where the convergence takes place in $ L^2(m_X) $. By density, we can assume that the functions $ F_1,\ldots,F_\ell $ are continuous. In this case we will show that \eqref{E:e3} holds for all $x\in X,$ hence we will obtain the result by using the Dominated Convergence Theorem. By applying Theorem~\ref{T:T1} to the nilmanifold $ X^\ell $, the nilrotation
$ \tilde{b}=(b,\ldots, b)\in G^\ell $, the point
$ \tilde{x}=(x,\ldots, x)\in X^\ell $, $x\in X,$ and the continuous function
$ \tilde{F}(x_1,\ldots,x_\ell)=F_1(x_1)\cdot\ldots\cdot F_\ell(x_\ell) $ (here we are using the goodness property of the $p_{i,N}$'s), we get that
\[ \lim_{N\to \infty}\frac{1}{N}\sum_{n=1}^N \tilde{F}(b^{[p_{1,N}(n)]}x, \ldots, b^{[p_{\ell,N}(n)]}x)= \int \tilde{F} \; dm_{X^\ell}.\]
This implies that \eqref{E:e3} holds for every $x\in X$, completing the proof.
\end{proof}

Next, we show Theorem~\ref{T:mc} (which in turn implies Theorem~\ref{T:mc_new}):

\begin{proof}[Proof of Theorem~\ref{T:mc}]
 As in the previous proof, our objective is to show that if $(p_N)_N\subseteq \mathbb{R}[t]$ is a good polynomial sequence with $(p_N,2 p_N,\ldots,\ell p_N)_N$ being super nice, then for every nilsystem $(X=G/\Gamma,\mathcal{G}/\Gamma,m_X,T_b)$, where $ b\in G $ is ergodic, and $ F_1,\ldots,F_\ell\in L^\infty(m_X),$ we have that the limit
\begin{equation}\label{E:e1}
\lim_{N\to \infty}\frac{1}{N}\sum_{n=1}^N F_1(b^{[p_N(n)]}x)\cdot F_2(b^{2[p_N(n)]}x)\cdot \ldots \cdot F_\ell(b^{\ell[p_N(n)]}x)
\end{equation}
is equal to the limit
\begin{equation}\label{E:e2}
\lim_{N\to \infty}\frac{1}{N}\sum_{n=1}^N F_1(b^nx)\cdot F_2(b^{2n}x)\cdot \ldots \cdot F_\ell(b^{\ell n}x).
\end{equation}
As in the proof above, we assume that every $F_i$ is continuous. Applying Theorem~\ref{T:T1} to $X^\ell,$ the nilrotation $\tilde{b}=(b,b^2,\ldots,b^\ell),$ the point $\tilde{x}=(x,x,\ldots,x),$ $x\in X,$ and the continuous function $\tilde{F}(x_1,\ldots,x_\ell)=F_1(x_1)\cdot F_2(x_2)\cdot\ldots\cdot F_\ell(x_\ell),$  we get
\[ \lim_{N\to\infty}\frac{1}{N}\sum_{n=1}^N\tilde{F}(\tilde{b}^{[p_N(n)]}\tilde{x})=\lim_{N\to\infty}\frac{1}{N}\sum_{n=1}^N\tilde{F}(\tilde{b}^{n}\tilde{x}).\]
This implies that the limits in \eqref{E:e1} and \eqref{E:e2} exist for every $ x\in X $ and  are equal.
\end{proof}

\subsection{Closing comments and problems} In the generality it is stated, Problem~\ref{P:10} (i.e., \cite[Problem~10]{F4}) remains open except in the $\ell=1$ case. In this article, we first showed that the nilfactor of a system is characteristic for the corresponding sequence of iterates under the additional super niceness assumption. 
Second, we showed that the goodness property alone was enough to imply the required equidistribution properties. This comes as no surprise for, as we have already mentioned, the goodness property is a strong equidistribution notion. Hence, to completely resolve the problem, someone has to answer the following problem in the positive:

\begin{problem}\label{P:cp}
For $\ell\in \mathbb{N}$ let $(\mathcal{P}_N)_N=(p_{1,N},\ldots,p_{\ell,N})_N$ be a good sequence of $\ell$-tuples of polynomials. Is it true that for every system its nilfactor $\mathcal{Z}$ is characteristic for $(\mathcal{P}_N)_N$?
\end{problem}

Analogously, to solve Problem~\ref{P:2}, it suffices to answer the following:

\begin{problem}\label{P:P2}
Let $(p_{N})_N$ be a good sequence of polynomials. Is it true that for every $\ell\in \mathbb{N}$ and every system its nilfactor $\mathcal{Z}$ is characteristic for the sequence $(\mathcal{P}_N)_N=(p_{N},2p_{N}\ldots,\ell p_{N})_N$?
\end{problem}




As in our results we have convergence to the ``expected'' limit, it is reasonable 
for someone to study the corresponding pointwise results along natural numbers. So, we naturally close this article with the following problem:

\begin{problem}\label{P:last}
Find classes of good variable polynomial iterates (e.g., the ones in Theorems~\ref{T:main} and ~\ref{T:mc}) for which we have the corresponding pointwise convergence results.\footnote{ Someone can start by studying the pointwise convergence for the special cases of averages with iterates from Examples~\ref{Ex:1} and ~\ref{Ex:2}.}
\end{problem}


\section*{Acknowledgments} Thanks go to D.~Karageorgos with whom I started discussing the problem; N.~Frantzikinakis for his constant support and fruitful discussions during the writing of this article; and N.~Kotsonis for his detailed corrections on the
text. I am also deeply thankful to the anonymous Referees, X and Y, whose detailed feedback led to numerous clarifications, improving the readability and quality of the article.









\medskip
Received November 2021; revised March 2022; early access May 2022.
\medskip

\end{document}